\title{Regularized combined field integral equations for acoustic transmission problems}
\author{Yassine Boubendir, V\'{\i}ctor Dom\'{\i}nguez, David Levadoux, 
Catalin Turc\\ \small 
New Jersey Institute of Technology, Universidad P\'ublica de Navarra, Spain,\\ \small
 ONERA France, New Jersey Institute of Technology\\ \small
boubendi@njit.edu,\ victor.dominguez@unavarra.es,\ david.levadoux@onera.fr,
catalin.c.turc@njit.edu}
\newtheorem{theorem}{Theorem}[section]
\newtheorem{lemma}[theorem]{Lemma}
\newtheorem{remark}[theorem]{Remark}
\newenvironment{proof}{\hspace{0.5cm} {\bf Proof.}}
{$\quad {}_\blacksquare$\vspace{0.3cm}}
\date{}
\begin{document}
\maketitle
\begin{abstract}
  We present a new class of well conditioned integral equations for the solution 
of
 two and three dimensional scattering problems by homogeneous penetrable 
scatterers. Our novel boundary integral equations result from suitable 
representations of the fields inside and outside the scatterer as 
combinations of single and double layer potentials acting on suitably defined 
regularizing operators. The regularizing operators are constructed to be 
suitable approximations of the admittance operators that map the transmission 
boundary conditions to the exterior and respectively interior Cauchy data on the 
interface between the media. The latter operators can be expressed in terms of 
Dirichlet-to-Neumann operators. We refer to these regularized boundary integral 
equations as Generalized Combined Source Integral 
Equations (GCSIE). The ensuing GCSIE are shown to be integral equations of 
the second kind in the case when the interface of material discontinuity is a 
smooth curve in two dimensions and a smooth surface in three dimensions.   
\newline \indent
  \textbf{Keywords}: transmission problems, Combined Field Integral Equations, 
regularizing operators, Dirichlet-to-Neumann operators.\\
\indent\textbf{MSC2010}: 35J05, 47G10, 45A05  
\end{abstract}

\section{Introduction\label{intro}}

\parskip 2pt plus2pt minus1pt

Numerical methods based on integral equation formulations for the solution of 
scattering problems, when applicable, have certain advantages over those that 
use volumetric formulations, largely owing to  the dimensional reduction and 
the 
explicit enforcement of the radiation conditions. A crucial requirement of 
reformulating a linear,
constant-coefficient PDE in terms of boundary integral equation formulations is 
that the latter are well-posed. In the case when the boundary of the scatterer 
is regular enough, there is a myriad of possibilities to derive well-posed 
boundary integral equations. Amongst those, the most widely used methodology of 
deriving well-posed integral equations for solution of scattering problems 
relies on Combined Field Integral Equations 
(CFIE)~\cite{BrackhageWerner,BurtonMiller,muller,KressRoach}. Here in what 
follows we assume that the boundary of the scatterer is regular enough; the 
case of less regular interfaces (e.g. Lipschitz) is much less understood. In 
the scalar case, this methodology seeks scattered fields in terms of suitable 
linear combinations of single and double layer potentials so that the 
enforcement of the boundary conditions leads to boundary integral equations 
(CFIE) whose underlying operators are 
Fredholm in suitable boundary trace spaces of scattering problems. The 
well-posedness of CFIE is then 
settled via uniqueness arguments for the Helmholtz equation with certain 
boundary conditions (e.g. impedance boundary conditions, transmission boundary 
conditions).   

Solvers based on integral 
equation formulations of scattering problems lead to systems of linear 
equations 
that involve dense matrices that can be very large in the high-frequency 
regime. 
Due to the large size of the underlying matrices, the solution of these linear 
algebra problems relies on Krylov subspace iterative solvers and is greatly 
facilitated by the availability of fast algorithms to 
perform matrix vector 
products. Thus, the efficiency of scattering solvers based on integral 
equations 
hinges a great deal on the spectral properties of the integral operators that 
enter the integral formulations, spectral properties themselves that influence 
the speed of convergence of iterative solvers. Thus, boundary integral 
equations of the second kind (e.g. equations whose operators are compact 
perturbations of identity in appropriate functional spaces) are preferable
for 
the solution of scattering problems. The combined field strategy delivers 
boundary integral equations of the second kind for scalar scattering problems 
with Dirichlet boundary conditions~\cite{BrackhageWerner} and transmission 
boundary conditions~\cite{muller,KressRoach,KittapaKleinman,rokhlin-dielectric}. 
However, it is typically the case that CFIE formulations, although extremely 
reliable, do not necessarily possess the best spectral properties amongst all 
well-posed formulations possible. For instance, in the case of scattering problems with other boundary conditions such as Neumann and impedance 
boundary conditions, various regularization procedures can deliver integral 
equations of the second 
kind~\cite{BrackhageWerner,BurtonMiller,AntoineX,Antoine,Levadoux,br-turc,turc1} with superior spectral properties. The regularization procedure is a means to 
derive {\em systematically} well-conditioned boundary integral formulations of 
scattering problems. The techniques are quite general and can be applied, in 
principle, to any boundary value problem for linear, constant-coefficient PDEs 
or systems of PDEs. 

All the regularization procedures for scalar scattering problems rely on the 
same main steps: (1) the fields (i.e. the solutions of the Helmholtz equations) 
are represented in each domain of interest with the aid of Green's formulas in 
terms of the Cauchy data (i.e. Dirichlet and Neumann traces) on the boundary of 
each domain; (2) an abstract operator $\mathcal{R}$ that maps the given 
boundary conditions to all the Cauchy data needed (e.g: in the case of Neumann 
boundary conditions, the operator $\mathcal{R}$ maps Neumann traces to 
Dirichlet traces and is thus a Neumann-to-Dirichlet operator)---the operator 
$\mathcal{R}$ is defined in terms of Dirichlet-to-Neumann operators and possibly 
their inverses; (3) the fields are represented using Green's formulas where the 
Cauchy data is represented as an operator $\widetilde{\mathcal{R}}$ that 
approximates the operator $\mathcal{R}$ acting on some unknown 
sources/densities. The enforcement of the boundary conditions on this 
representation leads to Generalized 
Combined Source Integral Equations (GCSIE) / Regularized Combined Field 
Integral Equations (CFIER). We note at this stage that by construction, if the 
operator $\mathcal{R}$ were used instead of $\widetilde{\mathcal{R}}$ in the 
GCSIE, then these would consist of the identity operator. In the next step (4) 
the degree of approximation of the operator $\widetilde{\mathcal{R}}$ (that is 
the degree of smoothing of the difference operator 
$\widetilde{\mathcal{R}}-\mathcal{R}$) is established so that second kind 
Fredholm GCSIE are obtained in appropriate boundary trace spaces of scalar 
scattering problems. The desired degree of smoothing is achieved provided that 
(5) the operator $\widetilde{\mathcal{R}}$ is constructed via suitable 
approximations of Dirichlet-to-Neumann operators. In order to meet the 
additional requirement that the ensuing GCSIE operators are injective (and thus 
invertible with continuous inverses), the aforementioned approximations of 
Dirichlet-to-Neumann operators are constructed through {\em 
complexification} of boundary integral operators~\cite{br-turc,turc1}, through 
{\em complexification} of the wavenumbers in the definition of 
Dirichlet-to-Neumann operators for simple geometries such as the 
half-planes/half-spaces corresponding to tangent lines/planes to the boundary 
of the scatterer~\cite{AntoineX,Antoine}---in these cases the 
Dirichlet-to-Neumann maps can be defined by Fourier multipliers, or through 
boundary integral operators corresponding to the wavenumbers of the Helmholtz 
equations under consideration and quadratic partitions of 
unities~\cite{Levadoux,Levadoux1,Levadoux2}. Calder\'on's identities are a 
crucial ingredient in the calculus in part (5). 

We present in this paper novel integral equation formulations of two and three 
dimensional scalar transmission scattering problems that apply the five-step 
program outlined above. These integral equation formulations are actually $2\times 2$ systems of integral equations whose unknowns are certain densities defined on the interface of material discontinuity. There are two possibilities in terms of the functional spaces in which we seek those densities and in which we aim to construct GCSIE operators that are Fredholm of the second kind: (i) we assume that both densities belong to the same boundary Sobolev space and (ii) we assume that one of the densities has one more order of regularity than the other. We note that numerical methods based on GCSIE with property (i) are more accurate, and more amenable to an error analysis. We note that this distinction plays an important role in the construction of appropriate approximations of the operator $\mathcal{R}$ in step (2) which maps the 
difference of exterior and interior Dirichlet and Neumann traces on the 
interface of material discontinuity to the Cauchy data of transmission 
problems. The matrix operator $\mathcal{R}$ is expressed in terms of 
compositions of exterior and interior Dirichlet-to-Neumann operators 
corresponding to two different domains and wavenumbers and inverses of operators 
that involve linear combinations of those. Depending on the case (i) and (ii), the degree of smoothing we require on the difference operator $\widetilde{\mathcal{R}}-\mathcal{R}$ is different. The approximating operators $\widetilde{\mathcal{R}}$, in turn, are constructed per step (5) above via approximations of {\em both} exterior and interior Dirichlet-to-Neumann operators. The degree of these latter approximations is different according to case (i) and case (ii) respectively, and the dimension of the ambient space in which we solve the transmission problem. We rely on Calder\'on's calculus to construct approximations of the Dirichlet-to-Neumann operators in terms of normal derivative of double layer operators corresponding to {\em complex} wavenumbers in case (i) and two dimensional ambient space and in case (ii) and three dimensional ambient space; or linear combinations of those with compositions of the normal derivative of double layer operators and double layer potentials corresponding to {\em imaginary} wavenumbers in case (ii) and three dimensional ambient space. The positivity of the imaginary parts 
of the former operators allows us to establish the injectivity and thus the 
invertibility of the GCSIE operators in both case (i) and case (ii). As it was illustrated in~\cite{turc2}, 
solvers based on the GCSIE formulations, on account of the superior spectral 
properties of these formulations, outperform solvers based on classical 
integral formulations of transmission 
problems~\cite{KressRoach,KittapaKleinman,rokhlin-dielectric}. 

The paper is organized as follows: in Section~\ref{cfie} we present the 
acoustic transmission problems and review the basic properties of scattering 
boundary integral operators; in Section~\ref{cfier} we define and compute the 
admittance operator $\mathcal{R}$ of transmission problems and we set up 
regularized integral equations in the form of Generalized Combined Source 
Integral Equations (GCSIE) that are based on regularizing operators 
$\widetilde{\mathcal{R}}$ that approximate the operators $\mathcal{R}$; in 
Section~\ref{app} we derive sufficient conditions on the regularizing operators 
$\widetilde{\mathcal{R}}$ so that they lead to Fredholm second kind GCSIE; in 
Section~\ref{appY} we construct approximations of Dirichlet-to-Neumann operators 
for each medium that lead via Cald\'eron's calculus to constructions of 
regularizing operators $\widetilde{\mathcal{R}}$; in 
Sections~\ref{case1},~\ref{case2}, and~\ref{case3} we establish the Fredholm 
properties of the GCSIE for various choices of regularizing 
operators $\widetilde{\mathcal{R}}$ in two and three dimensions; finally, in 
Section~\ref{uniq} we establish the well-posedness of the GCSIE.

\section{Integral Equations acoustic transmission\label{cfie}}

\subsection{Acoustic transmission problem}
We consider the problem of evaluating the time-harmonic fields $u^1$ and $u^2$ that result as an incident field $u^{inc}$ impinges upon the boundary
$\Gamma$ of a homogeneous penetrable scatterer $D_2$ which occupies a bounded region in $\mathbb{R}^d,\ d=2,3$. The frequency domain acoustic transmission problem is formulated in terms of finding fields $u^1$ and $u^2$ that are solutions to the Helmholtz equations
\begin{equation}
  \label{eq:Ac_i}
  \Delta u^2+k_2^2 u^2=0 \qquad \mathrm{in}\ D_2,
\end{equation}
\begin{equation}
  \label{eq:Ac_e}
  \Delta u^1+k_1^2 u^1=0\qquad \mathrm{in}\ D_1=\mathbb{R}^d\setminus D_2,
\end{equation}
given an incident field $u^{inc}$ that satisfies
\begin{equation}
  \label{eq:Maxwell_inc}
  \Delta u^{inc}+k_1^2 u^{inc}=0 \qquad \mathrm{in}\ D_1,
\end{equation}
where the wavenumbers $k_i,i=1,2$ are the wavenumbers corresponding to the domains $D_i,i=1,2$ respectively. In addition, the fields $u^{1}$, $u^{inc}$, and $u^2$ are related on the boundary $\Gamma$ by the the following boundary conditions
\begin{eqnarray}
\label{eq:bc}
\gamma_D^1 u^1 + \gamma_D^1 u^{inc} &=&\gamma_D^2 u^2\qquad \rm{on}\ \Gamma \nonumber\\
\gamma_N^1 u^1 + \gamma_N^1 u^{inc}&=&\nu \gamma_N^2u^2\qquad \rm{on}\ \Gamma.
\end{eqnarray}
In equations~\eqref{eq:bc} and what follows $\gamma_D^i,i=1,2$ denote exterior and respectively interior Dirichlet traces, whereas $\gamma_N^i,i=1,2$ denote exterior and respectively interior Neumann traces taken with respect to the exterior unit normal on $\Gamma$. We assume in what follows that the wavenumbers $k_i,i=1,2$ are positive and that the density ratio $\nu$ is also positive.  We note that in the case $d=2$, equations~\eqref{eq:Ac_i}-\eqref{eq:Maxwell_inc} can also model electromagnetic scattering by two-dimensional penetrable obstacles $D_1$, in which case $\nu=1$ (TE case) or $\nu=k_1^2/k_2^2$ (TM case). We assume in what follows that the boundary $\Gamma$ is a closed and smooth curve in $\mathbb{R}^2$ and a closed and smooth surface in $\mathbb{R}^3$. We furthermore require that $u^1$ satisfies Sommerfeld radiation conditions at infinity:
\begin{equation}\label{eq:radiation}
\lim_{|r|\to\infty}r^{(d-1)/2}(\partial u^1/\partial r - ik_1u^1)=0.
\end{equation}
Under the assumption that $k_1$, $k_2$, and $\nu$ are real and positive, it is well known that the systems of partial differential equations~\eqref{eq:Ac_i}-\eqref{eq:Maxwell_inc} together with the boundary conditions~\eqref{eq:bc} and the radiation condition~\eqref{eq:radiation} has a unique solution~\cite{KressRoach,KleinmanMartin}. The results in this text can be extended to the case of complex wavenumbers $k_i,i=1,2$, provided we assume uniqueness of the transmission problem and its adjoint.

\subsection{Layer integral potentials and operators \label{di_ind_cfie}}
A variety of integral equations for the transmission 
problem~\eqref{eq:Ac_i}-\eqref{eq:bc} 
exist~\cite{KressRoach,costabel-stephan,KleinmanMartin}. The starting point in 
the derivation of direct integral equations for transmission problems is the 
Green's identities. 
Hence, 
let 
\[
G_k(\mathbf{x}):=i/4 (k|\mathbf{x}|^{-1}/2\pi)^{(d-2)/2}H_{(d-2)/2}^{(1)}(k|\mathbf{x}|)
\]
the free space Green's functions corresponding to the Helmholtz equation with 
wavenumber
$k$. For the sake of a simpler exposition, from now on 
we will commit a slight abuse of notation and denote
\[
 G_j(\mathbf{x})=G_{k_j}(\mathbf{x}),\quad j=1,2.
\]
(The context will avoid any possible confusion). 

Next we define  the associated   single and double layer potential
\[
 [SL_k\varphi]({\bf z}):=\int_\Gamma G_k(\mathbf z - \mathbf y)\varphi(\mathbf 
y)d\sigma(\mathbf{y}),\quad 
[ DL_k\psi]({\bf z}):=\int_\Gamma  \frac{\partial G_k(\mathbf z - \mathbf 
y)}{\partial\mathbf{n}(\mathbf y)}\psi(\mathbf y)d\sigma(\mathbf{y})
\]
for $ {\bf z}\in \mathbb{R}^d\setminus \Gamma$. As before, 
$SL_j, DL_j$ denotes the layer potentials for the wavenumbers $k_j$, with 
$j=1,2$.

We have then the representation
formulas for the exterior and interior domain 
 \begin{equation}\label{green}
 u^1 =
 DL_1(\gamma_D^1 u^1) - SL_1(\gamma_N^1 u^1) ,\quad
u^2= 
- DL_2(\gamma_D^2 u^2) + SL_2(\gamma_N^2 u^2).
\end{equation}

%

In addition to Green's identities \eqref{green},
trace formulas of the single and double layer potential are needed in the 
derivation of integral equations for transmission problems. For a given 
wavenumber $k$, the traces on $\Gamma$ of the single and double layer 
potentials corresponding to the wavenumber $k$ and densities $\varphi$ and 
$\psi$ are given 
by
\begin{equation}
\label{traces}
\begin{array}{rclrcl} 
\gamma_D^1 SL_k(\varphi)&=&\gamma_D^2 SL_k(\varphi)=S_k\varphi  
&\gamma_N^j SL_k(\varphi)&=&\frac{(-1)^j}{2}\varphi+K_k^\top\varphi\quad j=1,2   
\\ 
\gamma_D^j DL_k(\psi)&=&\frac{(-1)^{j+1}}{2}\psi+K_k\psi\quad j=1,2 
&\gamma_N^1 DL_k(\psi)&=&\gamma_N^2 DL_k(\psi)=N_k\psi.
\end{array}
\end{equation}

In equations~\eqref{traces} the operators $K_k$ and $K^\top_k$  are the double 
layer and the adjoint of the double layer operator
defined for a given wavenumber $k$ and density $\varphi$ as
\begin{eqnarray*}
(K_k\varphi)(\mathbf x)&:=&\int_{\Gamma}\frac{\partial G_k(\mathbf x-\mathbf 
y)}{\partial\mathbf{n}(\mathbf y)}\varphi(\mathbf y)d\sigma(\mathbf y),\ \mathbf 
x\ {\rm on}\ \Gamma,\\
(K_k^\top\varphi)(\mathbf x)&:=&\int_{\Gamma}\frac{\partial G_k(\mathbf 
x-\mathbf y)}{\partial\mathbf{n}(\mathbf x)}\varphi(\mathbf y)d\sigma(\mathbf 
y),\ \mathbf x\ {\rm on}\ \Gamma.
\end{eqnarray*}
Furthermore,
\[
 (N_k \varphi)(\mathbf x)  :=  \text{FP} \int_\Gamma 
\frac{\partial^{2}G_k(\mathbf x -\mathbf y)}{\partial \mathbf{n}(\mathbf x) 
\partial \mathbf{n}(\mathbf y)} \varphi(\mathbf y)d\sigma(\mathbf y)
\]
is the so-called hypersingular operator (FP stands for 
the Hadamard Finite Part Integral). We point out 
\[
(N_k \varphi)(\mathbf x) 
=k^{2}\int_\Gamma G_k(\mathbf x -\mathbf y)
(\mathbf{n}(\mathbf x)\cdot\mathbf{n}(\mathbf y))\varphi(\mathbf 
y)d\sigma(\mathbf y)+ {\rm PV}
\int_\Gamma \partial_{s(\mathbf{x})} G_k(\mathbf x -\mathbf y)\partial_{s(\mathbf{y})} \varphi(\mathbf 
y)d\sigma(\mathbf y),\nonumber 
\]
when $d=2$, and 
\[
(N_k \varphi)(\mathbf x) 
=k^{2}\int_\Gamma G_k(\mathbf x -\mathbf y)
(\mathbf{n}(\mathbf x)\cdot\mathbf{n}(\mathbf y))\varphi(\mathbf 
y)d\sigma(\mathbf y)+ {\rm PV}
\int_\Gamma \overrightarrow{\rm curl}_\Gamma^{\mathbf x}\ G_k(\mathbf x -\mathbf 
y)\cdot\overrightarrow{\rm curl}_\Gamma^{\mathbf y}\ \varphi(\mathbf 
y)d\sigma(\mathbf y),\nonumber 
\]
when $d=3$, where PV denotes the Cauchy Principal value of the integral, 
$\partial_s$
the tangential derivative and
$\overrightarrow{\rm curl}_\Gamma\varphi=\nabla_\Gamma\varphi\times\mathbf{n}$. 
Finally, the single layer operator $S_k$ is defined as
\[
(S_k\varphi)(\mathbf x):=\int_\Gamma G_k(\mathbf x -\mathbf y)\varphi(\mathbf 
y)d\sigma(\mathbf y),\ \mathbf{x}\ {\rm on} \ \Gamma.
\]

Again, we will use $K_j,\ K_j^\top,\ N_j $ and $S_j$ for $j=1,2$ for 
denoting the layer operator associated to the wavenumbers $k_j$.

Having recalled the definition of the scattering boundary integral operators, 
we present next their mapping properties in appropriate Sobolev spaces of 
functions defined on the manifold $\Gamma$~\cite{mclean:2000,turc2}:

\begin{theorem}\label{regL} 
For a smooth curve/surface $\Gamma$ the mappings 
\begin{itemize}
\item $S_k:H^{s}(\Gamma)\to H^{s+1}(\Gamma)$
\item $N_k:H^{s+1}(\Gamma)\to H^{s}(\Gamma)$
\item $K_k^\top:H^{s}(\Gamma)\to H^{s+3}(\Gamma),\ d=2;\ 
K_k^\top:H^{s}(\Gamma)\to H^{s+1}(\Gamma),\ d=3$
\item $K_k:H^{s}(\Gamma)\to H^{s+3}(\Gamma),\ d=2;\ K_k:H^{s}(\Gamma)\to 
H^{s+1}(\Gamma),\ d=3$
\end{itemize}
are continuous
for all $s\in\mathbb{R}$. 
\end{theorem}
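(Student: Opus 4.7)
The plan is to reduce every statement to the Laplace (zero-wavenumber) case, and then to invoke classical pseudodifferential operator calculus on the smooth closed manifold $\Gamma$. First I would write $G_k=G_0+R_k$, where $G_0$ is the free-space fundamental solution of $-\Delta$ in $\mathbb{R}^d$. Using the classical expansion $H_0^{(1)}(z)=\tfrac{2i}{\pi}\log(z/2)J_0(z)+\text{(entire in $z^2$)}$ in two dimensions, and the identity $e^{ik|x|}=1+ik|x|+O(|x|^2)$ in three dimensions, the logarithmic (resp.\ $1/|x|$) singularities of $G_k$ and $G_0$ cancel and $R_k$ extends to a $C^\infty$ function on all of $\mathbb{R}^d$. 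Consequently each of the four layer operators splits as the corresponding Laplace operator plus a remainder whose kernel (or Hadamard finite-part kernel, in the case of $N_k$) is $C^\infty$ on $\Gamma\times\Gamma$. Since an integral operator with smooth kernel on a closed compact manifold maps every $H^s(\Gamma)$ into $C^\infty(\Gamma)$, it is smoothing of arbitrary order, and it suffices to prove the stated continuities for $S_0,N_0,K_0,K_0^\top$.

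Next I would appeal to the classical theory of layer potentials on a smooth closed curve/surface. The operator $S_0$ is an elliptic classical pseudodifferential operator of order $-1$ on $\Gamma$ with principal symbol $\tfrac{1}{2}|\xi|^{-1}$, which yields the continuity $H^s\to H^{s+1}$. The hypersingular operator $N_0$ is a classical elliptic pseudodifferential operator of order $+1$ with principal symbol $\tfrac{1}{2}|\xi|$, from which $H^{s+1}\to H^s$ follows. In three dimensions, the Laplace double layer kernel $\partial_{\mathbf n(\mathbf y)}G_0(\mathbf x-\mathbf y)$ behaves locally like $|\mathbf x-\mathbf y|^{1-d}$ with an extra geometric factor $(\mathbf x-\mathbf y)\cdot\mathbf n(\mathbf y)$ that gains one order of smoothing on a smooth surface, making $K_0$ and $K_0^\top$ classical pseudodifferential operators of order $-1$, hence continuous $H^s\to H^{s+1}$.

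The specifically two-dimensional behaviour of $K_k$ and $K_k^\top$ is then a consequence of a geometric cancellation that is special to curves: on a smooth closed planar curve, the kernel $\tfrac{(\mathbf x-\mathbf y)\cdot \mathbf n(\mathbf y)}{|\mathbf x-\mathbf y|^2}$ of $K_0$ (and, by symmetry, of $K_0^\top$) extends to a $C^\infty$ function on $\Gamma\times\Gamma$, because this expression tends to half the signed curvature of $\Gamma$ at $\mathbf x=\mathbf y$ and its higher tangential derivatives are controlled by successive derivatives of a smooth parametrization. Combined with the smoothness of $R_k$, this shows that $K_k$ and $K_k^\top$ have a $C^\infty$ kernel in two dimensions, so they map $H^s(\Gamma)$ into $C^\infty(\Gamma)\subset H^{s+3}(\Gamma)$, as claimed.

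The main technical obstacle is the verification that $N_0$ is indeed a classical elliptic pseudodifferential operator of order $+1$, since its kernel is only defined through a Hadamard finite part. I would handle this using the two alternative integration-by-parts representations of $N_k$ displayed in the excerpt, which express $N_k$ as the composition of a first-order surface/tangential derivative with a weakly singular integral operator of order $-1$ (of single-layer type), plus a lower-order remainder; the Hadamard regularization becomes automatic in this form, and the principal symbol can be read off by localizing on a chart and Fourier-transforming in the tangential variables. Everything else is routine bookkeeping within the symbolic calculus.
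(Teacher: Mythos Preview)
The paper does not supply its own proof of this theorem; it is stated with citations to McLean and to \cite{turc2} and then used as a black box throughout. So there is no in-paper argument to compare your proposal against.

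Your overall strategy---reduce to the Laplace case and invoke the classical pseudodifferential calculus on the closed manifold $\Gamma$---is the standard route and is perfectly sound for $S_k$ and $N_k$. But the proof as written contains a real error that is fatal for the two-dimensional double layer claim. The assertion that $R_k=G_k-G_0$ extends to a $C^\infty$ function on $\mathbb{R}^d$ is false in both dimensions. In three dimensions
\[
R_k(\mathbf{x})=\frac{e^{ik|\mathbf{x}|}-1}{4\pi|\mathbf{x}|}=\frac{ik}{4\pi}-\frac{k^2}{8\pi}|\mathbf{x}|+\cdots,
\]
and $|\mathbf{x}|$ is not smooth at the origin. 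In two dimensions the very expansion you quote yields
\[
R_k(\mathbf{x})=\frac{k^2}{8\pi}\,|\mathbf{x}|^2\log|\mathbf{x}|+\big(\text{smooth in }|\mathbf{x}|^2\big)+\cdots,
\]
which is $C^1$ but not $C^2$. Hence the remainder operators are \emph{not} smoothing of arbitrary order; they are pseudodifferential of a specific finite negative order.

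For $S_k$ and $N_k$ this is harmless, since that remainder order is still below the order of the Laplace part, so your conclusions survive once the overclaim is corrected. For $K_k,K_k^\top$ in $d=2$, however, it is decisive: you are right that $K_0$ has a $C^\infty$ kernel on a smooth curve and is therefore smoothing of infinite order, but this means the order of $K_k$ is determined \emph{entirely} by $K_k-K_0$. Differentiating the $|\mathbf{x}|^2\log|\mathbf{x}|$ term of $R_k$ in the normal direction and using the geometric cancellation $(\mathbf{x}-\mathbf{y})\cdot\mathbf{n}(\mathbf{y})=O(|\mathbf{x}-\mathbf{y}|^2)$ on a smooth curve, one finds that the kernel of $K_k-K_0$ has leading singularity $\sim|\mathbf{x}-\mathbf{y}|^2\log|\mathbf{x}-\mathbf{y}|$, which on a one-dimensional manifold is a pseudodifferential operator of order exactly $-3$. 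This is what gives $K_k:H^s\to H^{s+3}$, and for $k\neq 0$ it is sharp. Your argument, by contrast, would prove $K_k:H^s\to H^{s+m}$ for every $m$, which is false. The fix is simply to replace ``$R_k$ is $C^\infty$'' by the correct statement that $R_k$ has a weaker (but still non-smooth) singularity than $G_0$, and to track its precise order through the double layer.
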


We will use throughout the text the following results about the smoothing 
properties of differences of boundary integral operators corresponding to 
different wavenumbers 

\begin{theorem}\label{thmrev2}
Let 
$\kappa_1$ and $\kappa_2$ be such that $\Re{\kappa_j}\geq 0$ and 
$\Im(\kappa_j)\geq 0$. Then, 
\begin{eqnarray}\label{eq:01:thmrev2}
S_{\kappa_1}-S_{\kappa_2}:H^{s}(\Gamma)\to H^{s+3}(\Gamma),\quad 
N_{\kappa_1}-N_{\kappa_2}:H^{s}(\Gamma)\to H^{s+1}(\Gamma)  
\end{eqnarray}
are continuous for any $s$. 

Moreover, for $d=3$,
\begin{equation}
\label{eq:03:thmrev2}
 K_{\kappa_1}-K_{\kappa_2}, \quad K_{\kappa_1}^\top -K^\top_{\kappa_2}:
 H^{s}(\Gamma)\to H^{s+2}(\Gamma)
\end{equation}
are also continuous for all $s$.
\end{theorem}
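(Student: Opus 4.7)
The plan is to express each difference kernel explicitly, identify its leading non-smooth part at the diagonal by a Taylor/series expansion, and invoke the pseudohomogeneous-kernel calculus on smooth hypersurfaces to convert this regularity into the claimed Sobolev smoothing.

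For the single-layer difference I would start from
\[
G_{\kappa_1}(\mathbf{x}) - G_{\kappa_2}(\mathbf{x}) = (\kappa_1 - \kappa_2)\int_0^1 \partial_k G_k(\mathbf{x})\big|_{k = (1-t)\kappa_2 + t\kappa_1}\,dt,
\]
using $\partial_k G_k(\mathbf{x}) = \frac{i}{4\pi}e^{ik|\mathbf{x}|}$ in three dimensions and $\partial_k G_k(\mathbf{x}) = -\frac{i|\mathbf{x}|}{4}H_1^{(1)}(k|\mathbf{x}|)$ in two. In either dimension this is bounded at the diagonal, and a further Taylor (resp.\ Hankel-function) expansion shows that the leading non-smooth term in $G_{\kappa_1} - G_{\kappa_2}$ is proportional to $|\mathbf{x}|$ when $d=3$ and to $|\mathbf{x}|^2\log|\mathbf{x}|$ when $d=2$. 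Both of these are, up to smoother corrections, fundamental solutions of $(-\Delta)^2$ on $\mathbb{R}^d$, and their restriction to a smooth hypersurface is a pseudodifferential operator of order $-3$ on $\Gamma$, yielding the claimed mapping $H^s(\Gamma)\to H^{s+3}(\Gamma)$.

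The hypersingular claim follows by combining this with the decomposition of $N_k$ recalled above as $k^2 S_k$ (composed with pointwise multiplication by $\mathbf{n}(\mathbf{x})\cdot\mathbf{n}(\mathbf{y})$) plus a composition of first-order tangential differential operators with $S_k$: differencing produces a term $(\kappa_1^2-\kappa_2^2)S_{\kappa_j}$ of order $-1$, a term $\kappa_j^2(S_{\kappa_1}-S_{\kappa_2})$ of order $-3$, and a conjugation of $S_{\kappa_1}-S_{\kappa_2}$ by first-order tangential operators of order $-3+2=-1$, for a net order $-1$. For the adjoint double-layer difference in three dimensions I would work directly from the explicit formula
\[
\partial_k\bigl[\partial_{\mathbf{n}(\mathbf{y})}G_k(\mathbf{x}-\mathbf{y})\bigr] = \frac{k\,(\mathbf{x}-\mathbf{y})\cdot\mathbf{n}(\mathbf{y})}{4\pi|\mathbf{x}-\mathbf{y}|}\,e^{ik|\mathbf{x}-\mathbf{y}|},
\]
which, together with the geometric estimate $(\mathbf{x}-\mathbf{y})\cdot\mathbf{n}(\mathbf{y}) = O(|\mathbf{x}-\mathbf{y}|^2)$ for $\mathbf{x},\mathbf{y}$ on smooth $\Gamma$, shows that the kernel of $K_{\kappa_1}-K_{\kappa_2}$ vanishes at least linearly at the diagonal and hence gains an extra order of smoothing over $K_k$; exchanging $\mathbf{x}$ and $\mathbf{y}$ handles $K^\top_{\kappa_1}-K^\top_{\kappa_2}$ identically.

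The main technical obstacle will be the sharp conversion of kernel regularity near the diagonal into the precise Sobolev exponents in the statement, for which I would appeal to the pseudohomogeneous-kernel mapping theorems on smooth closed manifolds collected in~\cite{mclean:2000}. The only mildly delicate point is the two-dimensional single-layer analysis, where the leading non-smooth term is $|\mathbf{x}|^2\log|\mathbf{x}|$ rather than a pure power; however this is still a bi-Laplacian-type kernel and fits the same framework without additional effort.
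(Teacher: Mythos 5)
Your proposal is mathematically sound, but it proves the theorem by a genuinely different route than the paper. You work locally with the kernels: you write $G_{\kappa_1}-G_{\kappa_2}$ as an integral of $\partial_k G_k$ over the segment joining the wavenumbers, read off the leading pseudohomogeneous term at the diagonal ($|\mathbf{x}|$ for $d=3$, $|\mathbf{x}|^2\log|\mathbf{x}|$ for $d=2$, both biharmonic-type kernels of order $-3$ on $\Gamma$), and then propagate this through the Maue decomposition of $N_k$ and the factor $(\mathbf{x}-\mathbf{y})\cdot\mathbf{n}(\mathbf{y})=O(|\mathbf{x}-\mathbf{y}|^2)$ for $K_k$, $K_k^\top$; all of your kernel computations check out. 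The paper instead argues globally: it observes that $\omega=SL_{\kappa_1}\varphi-SL_{\kappa_2}\varphi$ solves a Poisson problem with right-hand side one order smoother than $\omega$ itself and with \emph{no jumps} in its Cauchy data across $\Gamma$, so transmission-problem elliptic regularity (McLean, Theorem 4.20) gives two extra orders of interior/exterior Sobolev regularity, and the boundary statements then follow by taking traces, with a transposition argument and Sobolev interpolation to cover all $s\in\mathbb{R}$. Your approach buys sharper constants on the orders -- the expansion of $e^{ikr}(ikr-1)$ actually shows the $K$- and $K^\top$-differences are of order $-3$ in $d=3$, not merely the $-2$ you (and the theorem) claim -- and it delivers all $s$ at once since the resulting operators are classical pseudodifferential operators on a closed manifold; the price is that you need the pseudohomogeneous-kernel mapping theorems, which live in N\'ed\'elec or Saranen--Vainikko rather than in~\cite{mclean:2000} as you cite, and you must control the full asymptotic expansion (not just the leading vanishing rate) to legitimately conclude mapping properties for large and negative $s$. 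The paper's route avoids all kernel computations and treats the four operators uniformly through the volume potentials, at the cost of the extra duality-plus-interpolation step. Two cosmetic points: the phrase ``vanishes at least linearly, hence gains an extra order'' is not by itself a proof -- it becomes one only because your explicit formula for $\partial_k[\partial_{\mathbf{n}(\mathbf{y})}G_k]$ exhibits the kernel as an analytic function of $r$ times smooth factors, so do say that; and in the Maue step recall that the ``single layer'' carries the smooth weight $\mathbf{n}(\mathbf{x})\cdot\mathbf{n}(\mathbf{y})$, which is harmless but should be acknowledged.
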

\begin{proof}
Let $B_R=\{x:\|x\|_2\leq R\}$ and set
 $D_R:=D_1\cap B_R$. We assume $R$ to be sufficiently large 
so that $D_2\subset \mathbb{R}^d\setminus D_R$. Fix $\varphi\in H^{s}(\Gamma)$ 
and denote
\[
v_{1}:={SL}_{\kappa_1}\varphi,\quad v_2:={SL}_{\kappa_2}\varphi.
\]
Observe that for $s>-1$ cf \cite[Cor 6.14]{mclean:2000}
\[
\|v_j\|_{H^{s+3/2}(D_R)}+
\|v_j\|_{H^{s+3/2}(D_2)}\le C \|\varphi\|_{H^{s}(\Gamma)},
\]
with $C>0$ independent of $\varphi$.
Then $\omega=v_1-v_2$ satisfies
\[
 \Delta \omega= {f},\quad\text{in $D_R\cup D_2$},\quad
 \gamma_N^1\omega-\gamma_N^2\omega=0,\quad \gamma_D^1\omega-\gamma_D^2\omega=0, 
\]
with $f=-\kappa_1^2 v_1-\kappa_2^2 v_2$. Observe that, due to \eqref{traces},
$f$ has a jump in the
normal derivative across $\Gamma$, no matter how smooth $\varphi$
is. Therefore, $\omega$ has a limited regularity in $\overline{D_R\cup D_2}$.  
However, we 
can apply \cite[Theorem 4.20]{mclean:2000} to prove that $\omega\in 
H^{s+7/2} (D_R\cup D_2)\cap H^2(\overline{D_R\cup D_2})$ and that there exists 
$C$,   again independent of $\varphi$, so that
\[
\|\omega\|_{H^{s+7/2} (D_R)}+
\|\omega\|_{H^{s+7/2} (D_2)}\le C\Big[\|f\|_{H^{s+3/2}(D_R)}+
\|f\|_{H^{s+3/2}(D_2)}\Big]\le   C' \|\varphi\|_{H^{s}(\Gamma)}.
\]
In other words, 
\[
 {SL}_{\kappa_1}-{SL}_{\kappa_2}:H^{s}(\Gamma)\to 
H^{s+7/2}(D_2),\quad {SL}_{\kappa_1}-{SL}_{\kappa_2}:H^{s}(\Gamma)\to 
H^{s+7/2}(D_R)
\]
are continuous for all $s>-1$ and any sufficiently
large $R$. 

Analogously, one can show  ${DL}_{\kappa_1}-{DL}_{\kappa_2}:H^{s+1}(\Gamma)\to 
H^{s+7/2}(D_2)$ and ${DL}_{\kappa_1}-{DL}_{\kappa_2}:H^{s+1}(\Gamma)\to 
H^{s+7/2}(D_R)$ are continuous for all $s>0$ and any sufficiently
large $R$. 

By \eqref{traces}, and the continuity of the trace and normal derivative in
the appropriate Sobolev spaces,
we can easily conclude  that 
\begin{align}
 S_{\kappa_1}-S_{\kappa_2}&:H^{s}(\Gamma)\to H^{s+3}(\Gamma),&
 N_{\kappa_1}-N_{\kappa_2}&:H^{s+1}(\Gamma)\to H^{s+2}(\Gamma)\nonumber\\
 K^\top_{\kappa_1}-K^\top_{\kappa_2}&:H^{s}(\Gamma)\to H^{s+2}(\Gamma),&
 K_{\kappa_1}-K_{\kappa_2}&:H^{s+1}(\Gamma)\to H^{s+3}(\Gamma),
 \label{eq:04:thmrev2}
\end{align}
are continuous for any $s>-1$.
A transposition argument in $L^2(\Gamma)$ proves that, again for $s>-1$, 
\begin{align}\label{eq:05:thmrev2}
 S_{\kappa_1}-S_{\kappa_2}&:H^{-s-3}(\Gamma)\to H^{-s}(\Gamma),&
 N_{\kappa_1}-N_{\kappa_2}&:H^{-s-2}(\Gamma)\to H^{-s-1}(\Gamma).\nonumber\\
K_{\kappa_1}-K_{\kappa_2}&:H^{-s-2}(\Gamma)\to H^{-s}(\Gamma),&
K^\top_{\kappa_1}-K^\top_{\kappa_2}&:H^{-s-3}(\Gamma)\to H^{-s-1}(\Gamma),
\end{align}
which with \eqref{eq:04:thmrev2}, proves \eqref{eq:01:thmrev2} for almost 
all  $s\in\mathbb{R}$. The remaining values are covered using the theory of 
interpolation in Sobolev spaces cf \cite[App. B]{mclean:2000}. Hence, for instance, 
 we  have already proved that
$ S_{\kappa_1}-S_{\kappa_2}:H^t(\Gamma)\to H^{t+3}(\Gamma)$ is continuous 
for $t\not\in[-2,-1]$.
For $s\in[-2,-1]$, 
we have, in the usual notation of 
interpolation spaces, $H^{s}(\Gamma)=[H^{s-2}(\Gamma),H^{s+2}(\Gamma)]_{1/2}$ and,
since  $s+2, s-2\not\in[-2,-1]$, we can make use of 
\eqref{eq:04:thmrev2}-\eqref{eq:05:thmrev2} and the theory of interpolation spaces to conclude
\[
  \|S_{\kappa_1}-S_{\kappa_2}\|_{H^{s}(\Gamma)\to H^{s+3}(\Gamma)}\le
  \|S_{\kappa_1}-S_{\kappa_2}\|^{1/2}_{H^{s+2}(\Gamma)\to H^{s+5}(\Gamma)}
  \|S_{\kappa_1}-S_{\kappa_2}\|^{1/2}_{H^{s-2}(\Gamma)\to H^{s+1}(\Gamma)}.
\]
The continuity for the excluded $s$ of
the remaining boundary layer operators are dealt in a similar way. 

\end{proof}

\section{Regularized combined source integral equations for transmission 
problems\label{cfier}}

We derive in this section regularized combined field integral equations for 
transmission problems that rely on the use of adequate approximations of the 
Dirichlet to Neumann (DtN) operators. We start by explaining the main idea of 
our strategy. To this end, we introduce several notations. For a field 
$\mathbf{u}=(u^1,u^2)^\top$ such that $u^1$ and $u^2$ are solutions of the 
equations~\eqref{eq:Ac_e} and~\eqref{eq:Ac_i} respectively, where in addition 
$u^1$ is radiative, we define by $\gamma_T(\mathbf{u})$ the operator that maps 
the field $\mathbf{u}$ to the boundary data of the transmission problems given 
in equations~\eqref{eq:bc}, that is 
\[
\gamma_T(\mathbf{u})=\left(\begin{array}{l}
\gamma_D^1u_1 -\gamma_D^2u_2\\\gamma_N^1u_1 -\nu \gamma_N^2u_2\end{array}\right).
\]
At the heart of our approach there are two operators $\mathcal{R}_1$ and 
$\mathcal{R}_2$ that for a given $\mathbf{u}$ defined as above map the {\it 
given} transmission boundary conditions (e.g. the operator 
$\gamma_T\mathbf{u}$) 
to the Cauchy data on $\Gamma$ given by the exterior and interior 
Dirichlet and Neumann traces of $\mathbf{u}$. More specifically, 
we  denote by $\mathcal{R}_1$ the matrix operator which maps the difference of 
exterior and interior Dirichlet and Neumann traces of the field $\mathbf{u}$ to  
the exterior Dirichlet and Neumann traces on $\Gamma$ of the component $u^1$ on 
$\Gamma$. We write this as 
\begin{equation}\label{eq:R1T}
\mathcal{R}_1\gamma_T(\mathbf{u})
=\gamma_C^1(\mathbf{u}),\quad\text{where}
\quad \gamma_C^1(\mathbf{u})=\left(\begin{array}{c}\gamma_D^1 
u^1\\ \gamma_N^1u^1\end{array}\right).
\end{equation}
Similarly, we denote by $\mathcal{R}_2$ 
the operator that maps the difference of exterior and interior Dirichlet and 
Neumann traces of the field $\mathbf{u}$ to the interior traces on $\Gamma$ of 
the component $u^2$, that is 
\begin{equation}\mathcal{R}_2\gamma_T(\mathbf{u}):=\gamma_C^2(\mathbf{u}) \quad 
\text{where}\quad \gamma_C^2(\mathbf{u})=\left(\begin{array}{c}\gamma_D^2u^2\\ 
\gamma_N^2u^2\end{array}\right).\end{equation}
On account of the boundary 
conditions in equations~\eqref{eq:bc}, it follows that 
$\mathcal{R}_2=\left(\begin{array}{cc}1&0\\0&\nu^{-1}\end{array}
\right)(\mathcal 
{R}_1-I)$, where $I$ denotes the identity matrix. The field $\mathbf{u}$ itself 
can be retrieved through the Green's formulas, i.e. equations~\eqref{green} 
from the Cauchy data 
$\gamma_C(\mathbf{u})=(\gamma_C^1(\mathbf{u})\ \gamma_C^2(\mathbf{u}))$ on 
$\Gamma$. We write this in operator form as
\[\mathbf{u}=\left(\begin{array}{cc}DL_1&-SL_1\\0&0\end{array}
\right)\left(\begin{array}{c}\gamma_D^1 
u^1\\\gamma_N^1u^1\end{array}\right) +\left(\begin{array}{cc}0 
&0\\-DL_2&SL_2\end{array}\right)\left(\begin{array}{c}\gamma_D^2 
u^2\\\gamma_N^2u^2\end{array}\right)
\]
or in short form as $\mathbf{u}=\mathcal{C}(\gamma_C\mathbf{u})$. Obviously, if 
we denote by $\mathcal{R}=(\mathcal{R}_1;\mathcal{R}_2)$, the following 
identity 
holds
\begin{equation}\label{eq:identity}
\gamma_T\mathcal{C}\mathcal{R}=I.
\end{equation}

The
operators $\mathcal{R}$ can be expressed in terms of Dirichlet to Neumann operators. However, their 
evaluation is numerically cumbersome, if at all possible. Our idea is to use 
instead certain suitable approximations $\widetilde{\mathcal{R}}_1$ of the 
operator $\mathcal{R}_1$ defined in equation~\eqref{matrixR}. Once operators 
$\widetilde{\mathcal{R}}_1$ are constructed, their counterparts can be taken to 
be 
$\mathcal{\widetilde{R}}_2=\left(\begin{array}{cc}1&0\\0&\nu^{-1}\end{array}
\right)(\mathcal{\widetilde{R}}_1-I)$. We look then for a field 
$\mathbf{u}=(u^1,u^2)^\top$ in the form 
$\mathbf{u}=\mathcal{C}\mathcal{\widetilde{R}}\mathbf{w}$, where 
$\mathbf{w}=(a,b)^\top$ is a vector density defined on $\Gamma$ and 
$\mathcal{\widetilde{R}}=(\mathcal{\widetilde{R}}_1;\mathcal{\widetilde{R}}
_2)$. More precisely, if we denote $\mathcal{\widetilde{R}}_1 = 
\left(\begin{array}{cc}\widetilde{R}_{11}&\widetilde{R}_{12}\\\widetilde{R}_{21}
&\widetilde{R}_{22}
\end{array}\right)$, we look for fields $u^1$ defined as
\begin{equation}
u^1(\mathbf{z})= 
DL_1(\widetilde{R}_{11}a+\widetilde{R}_{12}b)(\mathbf{z})-SL_1(\widetilde{R}_{21
}a+\widetilde{R}_{22} b)(\mathbf{z}),\quad 
\mathbf{z}\in\mathbb{R}^d\setminus\Gamma
\end{equation}
and $u^2$ defined as
\begin{equation}
u^2(\mathbf{z})= -DL_2(\widetilde{R}_{11} a+\widetilde{R}_{12} 
b-a)(\mathbf{z})+\nu^{-1}SL_2(\widetilde{R}_{21} a+\widetilde{R}_{22} 
b-b)(\mathbf{z}),\quad \mathbf{z}\in\mathbb{R}^d\setminus\Gamma.
\end{equation}
Using the jump conditions of the boundary layer potentials we are led to the 
integral equation 
\begin{equation}\label{eq:reg_diel}
\widetilde{\mathcal{D}}\left(\begin{array}{c} a\\ 
b\end{array}\right)=\gamma_T\mathcal{C}(\mathcal{\widetilde{R}}\mathbf{w}
)=-\left(\begin{array}{c}\gamma_D^1 u^{inc}\\\gamma_N^1 
u^{inc}\end{array}\right)
\end{equation}
which we refer to as Generalized Combined Source Integral Equation (GCSIE) and 
which takes on the following explicit form:
\begin{eqnarray}\label{Levadoux_simpl}
\left(\frac{1}{2}I-K_2+(K_1+K_2)\widetilde{R}_{11}-(S_1+\nu^{-1}
S_2)\widetilde { R } _ { 21 } \right)a\qquad&&\nonumber\\
+\left(\nu^{-1}S_2 + 
(K_1+K_2)\widetilde{R}_{12}-(S_1+\nu^{-1}S_2)\widetilde{R}_{22}
\right)b&=&-\gamma_D^1 u^{inc}\nonumber\\
\nonumber\\
\left(-\nu N_2 +(N_1+\nu 
N_2)\widetilde{R}_{11}-(K^\top_1+K^\top_2)\widetilde{R}_{21}
\right)a\qquad&&\nonumber\\
+\left(\frac{1}{2}I+K^\top_2+(N_1+\nu 
N_2)\widetilde{R}_{12}-(K^\top_1+K^\top_2)\widetilde{R}_{22}\right) 
b&=&-\gamma_N^1u^{inc}.
\end{eqnarray}
In what follows we compute the operator $\mathcal{R}$ in terms of Dirichlet to
Neumann operators and we establish in what sense should the operators 
$\mathcal{\widetilde{R}}$ approximate the operator $\mathcal{R}$ so that, in 
the light of formulas~\eqref{eq:identity} and~\eqref{eq:reg_diel}, the matrix 
operators $\mathcal{\widetilde{D}}$ in the left-hand side of GCSIE 
equations~\eqref{Levadoux_simpl} are close to the identity matrix. Notice that 
is precisely what we obtain when using ${R}_{ij}$ in \eqref{Levadoux_simpl} 
instead.

We present next a formal 
calculation of the operator $\mathcal{R}_1$ based on the use of 
Dirichlet-to-Neumann operators for the domains $D_j,j=1,2$. The latter 
operators 
are defined such that $Y^1$ maps the Dirichlet trace on the boundary $\Gamma$ 
of 
a radiative solution of the Helmholtz equation with wavenumber $k_1$ in the 
domain $D_1$ to its Neumann trace on the boundary $\Gamma$, i.e. 
$Y^1:\gamma_D^1\cdot\rightarrow \gamma_N^1\cdot$ and $Y^2$ maps the Dirichlet 
trace on the boundary $\Gamma$ of a solution of the Helmholtz equation with 
wavenumber $k_2$ in the domain $D_2$ to its Neumann trace on the boundary 
$\Gamma$, i.e. $Y^2:\gamma_D^2\cdot\rightarrow \gamma_N^2\cdot$. By \eqref{eq:R1T},
\begin{equation}\label{first_eq}
R_{11}(\gamma_D^1u^1-\gamma_D^2u^2)+R_{12}
(\gamma_N^1u^1-\nu\gamma_N^2u^2)=\gamma_D^1u^1.
\end{equation}
Equation~\eqref{first_eq} can be further expressed in term of the admittance 
operators $Y^j,j=1,2$ as the following (operator) linear system
\begin{eqnarray}\label{first_system}
R_{11}+R_{12}Y^1 &=& I\nonumber\\
R_{11}+\nu R_{12}Y^2 &=& 0.
\end{eqnarray}
We immediately obtain that the solution of the linear system in 
equation~(\ref{first_system}) is 
given by $R_{12}=(Y^1-\nu Y^2)^{-1}$ and $R_{11}=-\nu (Y^1-\nu Y^2)^{-1}Y^2$. 
Using the admittance operator $Y^1$ we see that the second row of the matrix 
operator $\mathcal{R}_1$ can be obtained by composing on the left the first row 
of $\mathcal{R}_1$ by $Y^1$. Hence we obtain
\begin{equation}\label{matrixR}
\mathcal{R}_1=\left(\begin{array}{cc}-\nu (Y^1-\nu Y^2)^{-1}Y^2 & (Y^1-\nu 
Y^2)^{-1}\\-\nu Y^1(Y^1-\nu Y^2)^{-1}Y^2 & Y^1(Y^1-\nu 
Y^2)^{-1}\end{array}\right).
\end{equation}
We note that the calculations that led to equation~\eqref{matrixR} are entirely 
formal, as the operators $(Y^1-\nu Y^2)^{-1}$ or $Y^2$ may not be 
well defined. Furthermore, the computation of the Dirichlet to Neumann 
operators $Y^j,j=1,2$ is expensive for general domains $D_j,j=1,2$. 
Nevertheless, operators $\widetilde{Y}^j,j=1,2$ can be constructed such that 
the 
difference operators $Y^j-\widetilde{Y}^j$ are smoother operators than $Y^j$ 
(these type of operators are referred to as parametrices). We discuss in what 
follows what degree of smoothing must the operators $Y^j-\widetilde{Y}^j,j=1,2$ 
have in order to lead to
GCSIE operators~\eqref{eq:reg_diel} that are Fredholm of the second kind in 
appropriate Sobolev spaces.


\section{Approximations of the admittance operators\label{app}}

Our goal is to 
produce appropriate approximations $\widetilde{\mathcal{R}}$ of the exact 
admittance operator $\mathcal{R}$ so that the matrix operators 
$\widetilde{\mathcal{D}}$ that enter GCSIE formulations~\eqref{eq:reg_diel} are 
(i) 
compact perturbations of the identity (matrix) operator in appropriate Sobolev 
spaces and (ii) invertible in the same spaces. 
We establish in this section sufficient conditions on the regularity properties 
of the difference matrix operators $\mathcal{\widetilde{R}}-\mathcal{R}$ that 
ensure the aforementioned property (i). We distinguish two cases with regards 
to Sobolev spaces the matrix operators $\widetilde{\mathcal{D}}$ act upon:

\noindent{\bf Case I} we consider $\gamma_D^1 u^{inc}\in H^s(\Gamma)$ and $\gamma_N^1 
u^{inc}\in 
H^s(\Gamma)$ which implies that the solution $(a,b)$ of the GCSIE 
formulations~\eqref{eq:reg_diel} has the same regularity, that is $(a,b)\in 
H^{s}(\Gamma)\times H^{s}(\Gamma)$;

\noindent{\bf Case II} we consider $\gamma_D^1 u^{inc}\in H^s(\Gamma)$ and $\gamma_N^1 
u^{inc}\in 
H^{s-1}(\Gamma)$ which implies $(a,b)\in 
H^{s}(\Gamma)\times H^{s-1}(\Gamma)$. 

Given that $\gamma_T\mathcal{C}\mathcal{R}=I$, we expect that once we construct 
operators $\widetilde{\mathcal{R}}$ with the desired properties (i) and (ii), 
the 
eigenvalues of the operators in the left-hand side of 
equation~(\ref{eq:reg_diel}) will accumulate at $(1,1)$. In addition, we strive 
to construct operators $\mathcal{\widetilde{R}}_1$ that are (iii) as simple as 
possible so that their evaluation is as numerically inexpensive as possible.

We present first a result that establishes in what sense should 
$\widetilde{\mathcal{R}}_1$ approximate $\mathcal{R}_1$ in order for the first 
property (i) to hold in {\bf Case I}. In order to make a more striking 
distinction between the two cases, 
we denote by $\mathcal{R}_1^{s,s}$ the approximating operators $\mathcal{R}_1$ 
of the operators $\mathcal{R}_1$ in the spaces $H^s(\Gamma)\times H^s(\Gamma)$. 
Given the mapping properties of the Dirichlet to Neumann 
operators $Y^j:H^{s}(\Gamma)\to H^{s-1}(\Gamma)$, and assuming that the 
operators $(Y^1-\nu Y^2)^{-1}$ are 
well defined, we have then the following mapping properties of the components 
$R_{ij},1\leq i,j\leq 2$ of the matrix operator $\mathcal{R}_1$ defined in 
equation~\eqref{matrixR}: $R_{11}:H^{s}(\Gamma)\to H^{s}(\Gamma)$, 
$R_{12}:H^{s}(\Gamma)\to H^{s+1}(\Gamma)$, $R_{21}:H^{s}(\Gamma)\to 
H^{s-1}(\Gamma)$, and $R_{22}:H^{s}(\Gamma)\to H^{s}(\Gamma)$. Our first 
important result is given in

\begin{theorem}\label{thm1}
Assume that the operators $(Y^1-\nu Y^2)^{-1}$ and $Y_2$ are well defined. 
Let $\widetilde{R}_{ij}^{s,s},i,j=1,2$ be operators such that for all 
$s\in\mathbb{R}$
we have
\begin{itemize}
\item $\widetilde{R}_{11}^{s,s}-R_{11}:H^{s}(\Gamma)\to H^{s+2}(\Gamma)$
\item $\widetilde{R}_{12}^{s,s}-R_{12}:H^{s}(\Gamma)\to H^{s+3}(\Gamma)$
\item $\widetilde{R}_{21}^{s,s}-R_{21}:H^{s}(\Gamma)\to H^{s+1}(\Gamma)$
\item $\widetilde{R}_{22}^{s,s}-R_{22}:H^{s}(\Gamma)\to H^{s+2}(\Gamma).$
\end{itemize}
Then the matrix operator $\widetilde{\mathcal{D}}^{s,s}$ 
defined in equation~\eqref{eq:reg_diel} and corresponding to the regularizing 
operator $\mathcal{R}_1^{s,s}$ has the following mapping property 
$\widetilde{\mathcal{D}}^{s,s}:H^{s}(\Gamma)\times H^{s}(\Gamma)\to 
H^{s}(\Gamma)\times 
H^{s}(\Gamma)$. 
Furthermore, the operator $\widetilde{\mathcal{D}}^{s,s}$ can be written in the 
form
$$\mathcal{\widetilde{D}}^{s,s}=I 
+\left(\begin{array}{cc}\widetilde{D}_{11}^r&\widetilde{D}_{12}^r\\ 
\widetilde{D}_{21}^r & \widetilde{D}_{22}^r\end{array}\right)$$
where $\widetilde{D}_{11}^r:H^{s}(\Gamma)\to 
H^{s+2}(\Gamma)$, $\widetilde{D}_{12}^r:H^{s}(\Gamma)\to H^{s+3}(\Gamma)$, 
$\widetilde{D}_{21}^r:H^{s}(\Gamma)\to H^{s+1}(\Gamma)$, and 
$\widetilde{D}_{22}^r:H^{s}(\Gamma)\to H^{s+2}(\Gamma)$. In particular, and 
given the compact 
embeddings of $H^{t}(\Gamma)$ into $H^{s}(\Gamma)$ for all $t>s$,  it follows 
that the matrix operator  $\mathcal{\widetilde{D}}^{s,s}$ is a compact 
perturbation of 
identity in the space  $H^{s}(\Gamma)\times H^{s}(\Gamma)\to 
H^{s}(\Gamma)\times H^{s}(\Gamma)$.
\end{theorem}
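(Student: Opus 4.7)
The plan is to use the identity $\gamma_T\mathcal{C}\mathcal{R}=I$ from~\eqref{eq:identity} to rewrite $\widetilde{\mathcal{D}}^{s,s}-I$ as a matrix whose entries are compositions of standard layer operators with the four differences $\widetilde{R}_{ij}^{s,s}-R_{ij}$, $i,j=1,2$. Since both $\widetilde{\mathcal{R}}_2$ and $\mathcal{R}_2$ are obtained from $\widetilde{\mathcal{R}}_1$ and $\mathcal{R}_1$ respectively by the same affine recipe $\mathrm{diag}(1,\nu^{-1})(\,\cdot\,-I)$, the matrix-valued difference $\widetilde{\mathcal{R}}-\mathcal{R}$ is controlled entirely by $\widetilde{\mathcal{R}}_1-\mathcal{R}_1$, so only these four differences enter the analysis.

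First I would read off the four entries of $\widetilde{\mathcal{D}}^{s,s}$ directly from~\eqref{Levadoux_simpl} and subtract from each one the value obtained by replacing every $\widetilde{R}_{ij}^{s,s}$ by $R_{ij}$, the latter matrix being the identity by~\eqref{eq:identity}. This immediately produces, for example, $\widetilde{D}_{11}^r=(K_1+K_2)(\widetilde{R}_{11}^{s,s}-R_{11})-(S_1+\nu^{-1}S_2)(\widetilde{R}_{21}^{s,s}-R_{21})$, together with analogous formulas for $\widetilde{D}_{12}^r$, $\widetilde{D}_{21}^r$ and $\widetilde{D}_{22}^r$ in which $(N_1+\nu N_2)$ and $(K_1^\top+K_2^\top)$ appear as the outer factors in the bottom row.

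Next comes a direct bookkeeping check using Theorem~\ref{regL} and the four hypothesized smoothing orders. The binding estimates come from pairing $S_\ell$ (which gains only one order) with $\widetilde{R}_{2j}^{s,s}-R_{2j}$ in the first row, and $N_\ell$ (which loses one order) with $\widetilde{R}_{1j}^{s,s}-R_{1j}$ in the second row; the operators $K_\ell$ and $K_\ell^\top$ gain at least one order in three dimensions and three orders in two dimensions, and therefore never bind. Entry-by-entry composition then delivers the advertised orders $+2,+3,+1,+2$ for $\widetilde{D}_{11}^r$, $\widetilde{D}_{12}^r$, $\widetilde{D}_{21}^r$, $\widetilde{D}_{22}^r$, valid in both $d=2$ and $d=3$.

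Finally, since $\Gamma$ is a closed smooth manifold, Rellich's compact embedding $H^{s+\alpha}(\Gamma)\hookrightarrow H^{s}(\Gamma)$ for every $\alpha>0$ turns each $\widetilde{D}_{ij}^r$ into a compact operator on $H^{s}(\Gamma)$, and hence $\widetilde{\mathcal{D}}^{s,s}$ is a compact perturbation of the identity on $H^{s}(\Gamma)\times H^{s}(\Gamma)$. The only place where care is really needed is the three-dimensional setting, where $K_\ell$ and $K_\ell^\top$ gain only a single Sobolev order; the smoothing orders prescribed for $\widetilde{R}_{ij}^{s,s}-R_{ij}$ have been tuned precisely so that, combined with the weakest of the surrounding layer operators, a strictly positive gain remains in every entry.
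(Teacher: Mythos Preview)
Your proposal is correct and follows essentially the same approach as the paper: the paper likewise uses $\gamma_T\mathcal{C}\mathcal{R}=I$ to write $\widetilde{\mathcal{D}}^{s,s}-I$ entrywise, obtaining exactly your formula $\widetilde{D}_{11}^r=(K_1+K_2)(\widetilde{R}_{11}^{s,s}-R_{11})-(S_1+\nu^{-1}S_2)(\widetilde{R}_{21}^{s,s}-R_{21})$ and its three analogues, and then verifies the claimed Sobolev gains term by term via Theorem~\ref{regL}. Your remark that the $K_\ell,K_\ell^\top$ terms never bind while the $S_\ell$ terms bind in the first row and the $N_\ell$ terms bind in the second row is a correct sharpening of the bookkeeping the paper carries out explicitly.
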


\begin{proof}
Given the assumptions about the operators $\widetilde{R}_{ij}^{s,s}$, it follows 
that 
they have the same mapping properties as the operators $R_{ij}$ for $i,j=1,2$, 
and thus the mapping property of the operators $\widetilde{\mathcal{D}}^{s,s}$ 
follow 
immediately. Let us consider first
the identity (see \eqref{eq:identity})
\[
  \mathcal{D}^{s,s}-I=\gamma_T{\cal C}\mathcal{R}^{s,s}-\gamma_T{\cal 
C}\mathcal{R}.
\]


The result of the 
Theorem follows once we show that $\widetilde{D}_{ij}^{s,s}-D_{ij}$ are 
regularizing 
operators of one order for all $i=1,2$ and $j=1,2$. A simple calculation gives
\[
\widetilde{D}_{11}^r=\widetilde{D}_{11}^{s,s}-I=\widetilde{D}_{11}^{s,s}-{D}_{
11}= (K_1+K_2)(\widetilde{R}^{s,s}_{11}-R_{11} )-(S_1+\nu^{-1 } 
S_2)(\widetilde{R}_{21}^{s,s}-R_{21}).
\]
We have that $\widetilde{R}_{11}^{s,s}-R_{11}:H^{s}(\Gamma)\to H^{s+2}(\Gamma)$ 
and 
$K_1+K_2: H^{s+2}(\Gamma)\to  H^{s+3}(\Gamma)$, together with 
$\widetilde{R}_{21}^{s,s}-R_{21}:H^{s}(\Gamma)\to H^{s+1}(\Gamma)$ and 
$S_1+\nu^{-1}S_2:H^{s+1}(\Gamma)\to H^{s+2}(\Gamma)$, and thus 
$\widetilde{D}_{11}^r:H^{s}(\Gamma)\to H^{s+2}(\Gamma)$. We also 
have
\[
 \widetilde{D}_{12}^r=\widetilde{D}_{12}^{s,s}=
\widetilde{D}_{12}^{s,s}-D_{12}=(K_1+K_2)(\widetilde{R}^{s,s}_{12}-R_{12}
)-(S_1+\nu^{-1}
S_2)(\widetilde {R}_{22}^{s,s}-R_{22})
\]
and hence $\widetilde{D}_{12}^r:H^{s}(\Gamma)\to H^{s+3}(\Gamma)$ since 
$\widetilde{R}_{12}^{s,s}-R_{12}:H^{s}(\Gamma)\to H^{s+3}(\Gamma)$, 
$K_1+K_2:H^{s+3}(\Gamma)\to H^{s+4}(\Gamma)$, 
$\widetilde{R}_{22}^{s,s}-R_{22}:H^{s}(\Gamma)\to H^{s+2}(\Gamma)$, and 
$S_1+\nu^{-1}S_2:H^{s+2}(\Gamma)\to H^{s+3}(\Gamma)$. Furthermore, given that
\[
 \widetilde{D}_{21}^r=\widetilde{D}_{21}^{s,s}=
\widetilde{D}_{21}^{s,s}-D_{21}=(N_1+\nu 
N_2)(\widetilde{R}_{11}^{s,s}-R_{11})-(K_1^\top+K_2^\top)(\widetilde{R}_{21}^{s,
s}-R_{21})
\]
we obtain that $\widetilde{D}_{21}^r:H^{s}(\Gamma)\to H^{s+1}(\Gamma)$ since 
$\widetilde{R}_{11}^{s,s}-R_{11}:H^{s}(\Gamma)\to H^{s+2}(\Gamma)$, $N_1+\nu 
N_2:H^{s+2}(\Gamma)\to H^{s+1}(\Gamma)$, 
$\widetilde{R}_{21}^{s,s}-R_{21}:H^{s}(\Gamma)\to 
H^{s+1}(\Gamma)$, and $K_1^\top+K^\top_2:H^{s+1}(\Gamma)\to H^{s+2}(\Gamma)$. 
Finally, 
we have that
\[ 
\widetilde{D}_{22}^r=\widetilde{D}_{22}^{s,s}-I=\widetilde{D}_{22}^{s,s}-D_{22}
=(N_1+\nu 
N_2)(\widetilde{R}_{12}^{s,s}-R_{12})-(K_1^\top+K_2^\top)(\widetilde{R}_{22}^{s,
s}-R_{22})
\]
from which we obtain that $\widetilde{D}_{22}^r:H^{s}(\Gamma)\to 
H^{s+2}(\Gamma)$ 
since $\widetilde{R}_{12}^{s,s}-R_{12}:H^{s}(\Gamma)\to H^{s+3}(\Gamma)$, 
$N_1+\nu 
N_2:H^{s+3}(\Gamma)\to H^{s+2}(\Gamma)$, 
$\widetilde{R}_{22}^{s,s}-R_{22}:H^{s}(\Gamma)\to 
H^{s+2}(\Gamma)$, and $K_1^\top+K^\top_2:H^{s+2}(\Gamma)\to H^{s+3}(\Gamma)$.
\end{proof}

\begin{remark}\label{remark1}
 We note that it follows immediately form the proof of Thorem~\ref{thm1} that the compactness result still holds when the requirements on the regularity of the difference operators $\widetilde{R}_{ij}^{s,s}-R_{ij},\ i,j=1,2$ are relaxed to $\widetilde{R}_{11}^{s,s}-R_{11}:H^{s}(\Gamma)\to H^{s+2}(\Gamma)$, $\widetilde{R}_{12}^{s,s}-R_{12}:H^{s}(\Gamma)\to H^{s+2}(\Gamma)$, $\widetilde{R}_{21}^{s,s}-R_{21}:H^{s}(\Gamma)\to H^{s}(\Gamma)$, and $\widetilde{R}_{22}^{s,s}-R_{22}:H^{s}(\Gamma)\to H^{s}(\Gamma).$ However, we will construct the operators $\widetilde{R}_{ij}^{s,s},i,j=1,2$ based on approximation $\widetilde{Y}^j$ of the Dirichlet-to-Neumann operators $Y^j$ for $j=1,2$, and thus we will start with constructing the simpler operator $\widetilde{R}_{12}^{s,s}$. Once $\widetilde{R}_{12}^{s,s}$ constructed, we will construct $\widetilde{R}_{11}^{s,s}=-\nu \widetilde{R}_{12}^{s,s}\widetilde{Y}^2$, and thus
\[
\widetilde{R}_{11}^{s,s}-R_{11}=-\nu(\widetilde{R}_{12}^{s,s}-R_{12})\widetilde{Y}^2-\nu R_{12}(\widetilde{Y}^2-Y^2).
\]
It follows then that in order for $\widetilde{R}_{11}^{s,s}-R_{11}:H^{s}(\Gamma)\to H^{s+2}(\Gamma)$--which is the optimal regularity property of the operator $\widetilde{R}_{11}^{s,s}-R_{11}$, we must have $\widetilde{Y}^2-Y^2:H^{s}(\Gamma)\to H^{s+1}(\Gamma)$ and $\widetilde{R}_{12}^{s,s}-R_{12}:H^{s}(\Gamma)\to H^{s+3}(\Gamma)$. We note that the latter regularity property of the operator $\widetilde{R}_{12}^{s,s}-R_{12}$ is what is required in Theorem~\ref{thm1}. Similar considerations motivate the other regularity requirements stated in Theorem~\ref{thm1}.
\end{remark}

We present next a result that establishes in what sense should 
$\widetilde{\mathcal{R}}_1$ approximate $\mathcal{R}_1$ in order for the first 
property (i) to hold in {\bf Case II}. In this case we denote by 
$\mathcal{R}_1^{s,s-1}$ the 
approximating operators $\mathcal{R}_1$ of the operators $\mathcal{R}_1$ in the 
spaces $H^s(\Gamma)\times H^{s-1}(\Gamma)$. Given the discussion in Remark~\ref{remark1}, we have

\begin{theorem}\label{thm10}
Assume that the operators $(Y^1-\nu Y^2)^{-1}$ and $Y_2$ are well defined. 
Let $\widetilde{R}_{ij}^{s,s-1},i,j=1,2$ be operators such that for all 
$s\in\mathbb{R}$
we have
\begin{itemize}
\item $\widetilde{R}_{11}^{s,s-1}-R_{11}:H^{s}(\Gamma)\to H^{s+1}(\Gamma)$
\item $\widetilde{R}_{12}^{s,s-1}-R_{12}:H^{s-1}(\Gamma)\to H^{s+1}(\Gamma)$
\item $\widetilde{R}_{21}^{s,s-1}-R_{21}:H^{s}(\Gamma)\to H^{s}(\Gamma)$
\item $\widetilde{R}_{22}^{s,s-1}-R_{22}:H^{s-1}(\Gamma)\to H^{s+1}(\Gamma).$
\end{itemize}
Then the matrix operator $\widetilde{\mathcal{D}}^{s,s-1}$ 
defined in equation~\eqref{eq:reg_diel} and corresponding to the regularizing 
operator $\mathcal{R}_1^{s,s-1}$
has the following mapping property 
$\widetilde{\mathcal{D}}^{s,s-1}:H^{s}(\Gamma)\times H^{s-1}(\Gamma)\to 
H^{s}(\Gamma)\times 
H^{s-1}(\Gamma)$. 
Furthermore, the operator $\widetilde{\mathcal{D}}^{s,s-1}$ can be written in 
the form
$$\mathcal{\widetilde{D}}^{s,s-1}=I 
+\left(\begin{array}{cc}\hat{D}_{11}^r&\hat{D}_{12}^r\\ 
\hat{D}_{21}^r & \hat{D}_{22}^r\end{array}\right)$$
where $\hat{D}_{11}^r:H^{s}(\Gamma)\to 
H^{s+1}(\Gamma)$, $\hat{D}_{12}^r:H^{s}(\Gamma)\to H^{s+2}(\Gamma)$, 
$\hat{D}_{21}^r:H^{s}(\Gamma)\to H^{s}(\Gamma)$, and 
$\hat{D}_{22}^r:H^{s}(\Gamma)\to H^{s+1}(\Gamma)$. In particular $\mathcal{\widetilde{D}}^{s,s-1}$ is a compact 
perturbation of 
identity in the space  $H^{s}(\Gamma)\times H^{s-1}(\Gamma)\to 
H^{s}(\Gamma)\times H^{s-1}(\Gamma)$.

\end{theorem}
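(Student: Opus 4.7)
The plan is to mirror the strategy of the proof of Theorem~\ref{thm1}, now specialized to the asymmetric Sobolev setting $H^s(\Gamma)\times H^{s-1}(\Gamma)$. First, since the differences $\widetilde{R}_{ij}^{s,s-1}-R_{ij}$ are smoothing by hypothesis, each $\widetilde{R}_{ij}^{s,s-1}$ inherits the continuity of the corresponding $R_{ij}$---namely $\widetilde{R}_{11}^{s,s-1}\colon H^s\to H^s$, $\widetilde{R}_{12}^{s,s-1}\colon H^{s-1}\to H^s$, $\widetilde{R}_{21}^{s,s-1}\colon H^s\to H^{s-1}$, and $\widetilde{R}_{22}^{s,s-1}\colon H^{s-1}\to H^{s-1}$, as read off from the mapping properties of $Y^j$. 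Composing these with the boundary integral operators according to Theorem~\ref{regL} immediately gives the claimed continuity $\widetilde{\mathcal{D}}^{s,s-1}\colon H^s(\Gamma)\times H^{s-1}(\Gamma)\to H^s(\Gamma)\times H^{s-1}(\Gamma)$.

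For the identity-plus-compact structure, I would start from the identity \eqref{eq:identity} to write
\[
\widetilde{\mathcal{D}}^{s,s-1}-I \;=\; \gamma_T\mathcal{C}\bigl(\widetilde{\mathcal{R}}^{s,s-1}-\mathcal{R}\bigr),
\]
which, exactly as in the proof of Theorem~\ref{thm1}, produces the four explicit blocks
\[
\hat{D}_{1j}^r = (K_1+K_2)\bigl(\widetilde{R}_{1j}^{s,s-1}-R_{1j}\bigr) - (S_1+\nu^{-1}S_2)\bigl(\widetilde{R}_{2j}^{s,s-1}-R_{2j}\bigr),
\]
\[
\hat{D}_{2j}^r = (N_1+\nu N_2)\bigl(\widetilde{R}_{1j}^{s,s-1}-R_{1j}\bigr) - (K^\top_1+K^\top_2)\bigl(\widetilde{R}_{2j}^{s,s-1}-R_{2j}\bigr),
\]
for $j=1,2$. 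For each block, I would compose the hypothesized smoothing of $\widetilde{R}_{ij}^{s,s-1}-R_{ij}$ with the continuity of the layer operators from Theorem~\ref{regL}, retaining the worse of the two summand shifts. For $\hat{D}_{21}^r$, for instance, the first summand gives $H^s\to H^{s+1}\to H^s$ via $N_1+\nu N_2$, while the second gives $H^s\to H^s\to H^{s+1}$ via $K^\top_1+K^\top_2$, so $\hat{D}_{21}^r\colon H^s\to H^s$. The remaining three blocks are treated identically to produce the shifts stated in the theorem, uniformly in $d=2,3$. Compactness of each block as a map into its natural target space ($H^s$ or $H^{s-1}$) then follows block by block from the compact Sobolev embeddings $H^{t'}\hookrightarrow H^t$ for $t'>t$.

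The only obstacle is careful index bookkeeping: the asymmetric ambient space $H^s\times H^{s-1}$ already confers a one-order gap between the natural domain/range indices of each block of $\widetilde{\mathcal{D}}^{s,s-1}$, and this is precisely why the smoothing demanded of the $\widetilde{R}_{ij}^{s,s-1}-R_{ij}$ is one order weaker than in Theorem~\ref{thm1}. One must verify that this reduced smoothing, together with the one-order gains furnished by $S_k$, $K_k$, $K^\top_k$ and the one-order loss from $N_k$, still lands each entry in a space compactly embedded in its appropriate target component. The calculation is routine, and no analytic difficulty beyond this verification is anticipated.
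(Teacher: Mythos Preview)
Your proposal is correct and follows essentially the same approach as the paper: the paper's proof is a one-line reference back to Theorem~\ref{thm1}, noting only that in the case $d=3$ the operators $K_j$ and $K_j^\top$ gain just one Sobolev order rather than three, and your argument spells out precisely this verification block by block. Your explicit bookkeeping for each $\hat{D}_{ij}^r$ reproduces (and in the $(1,2)$-entry slightly sharpens) the mapping properties claimed in the statement.
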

\begin{proof}
The proof of this result follows the same lines as the proof in 
Theorem~\ref{thm1}, yet taking into account the different mapping properties of 
the operators $K_j$ and $K_j^\top$ in the case $d=3$.
\end{proof}

Having established sufficient conditions that the operators 
$\mathcal{\widetilde{R}}^{s,s}$ and $\mathcal{\widetilde{R}}^{s,s-1}$ should 
satisfy in order 
for the corresponding GCSIE defined in 
equations~\eqref{eq:reg_diel} be second kind Fredholm integral equations in the 
spaces
$H^s(\Gamma)\times H^s(\Gamma)$ and $H^s(\Gamma)\times H^{s-1}(\Gamma)$ 
respectively,
we  return to explicit constructions of the component operators 
$\widetilde{R}_{ij}^{s,s}$ and $\widetilde{R}_{ij}^{s,s-1}$. 
In the former case, the key ingredient in our construction of suitable operators 
 that satisfy the assumptions in 
Theorem~\ref{thm1} is the use of suitable approximations $\widetilde{Y}^j_{-1}$ 
of the 
Dirichlet to Neumann operators $Y^j$ such that 
$\widetilde{Y}^j_{-1}-Y^j:H^{s}(\Gamma)\to 
H^{s+1}(\Gamma)$ for $s\in\mathbb{R}$. Conversely, to construct 
 $\widetilde{R}_{ij}^{s,s-1}$ fulfilling  the hypothesis in 
Theorem~\ref{thm10} we make use instead of $\widetilde{Y}^j_{0}$ such that 
$\widetilde{Y}^j_{0}-Y^j:H^{s}(\Gamma)\to 
H^{s}(\Gamma)$ for $s\in\mathbb{R}$. First we present the following result

\begin{lemma}\label{lemma0}
Let $A, \widetilde{A}:H^s(\Gamma)\to H^{s+\alpha}(\Gamma)$ be invertible, with $
A-\widetilde{A}:H^s(\Gamma)\to H^{s+\beta}(\Gamma)$ for all $s$ with 
$\beta>\alpha$. Then $A^{-1}-\widetilde{A}^{-1} :H^s(\Gamma)\to 
H^{s-2\alpha+\beta}(\Gamma)$. 
In particular, $\widetilde{A}^{-1}$ is a compact 
perturbation of $A^{-1}$
\end{lemma}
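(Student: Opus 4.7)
The plan is to exploit the standard resolvent-type algebraic identity
\[
A^{-1}-\widetilde{A}^{-1} \;=\; A^{-1}\bigl(\widetilde{A}-A\bigr)\widetilde{A}^{-1},
\]
which is verified at once by multiplying out $A^{-1}\widetilde{A}\widetilde{A}^{-1}-A^{-1}A\widetilde{A}^{-1}$. The invertibility hypothesis is used precisely to give meaning to $A^{-1}$ and $\widetilde{A}^{-1}$ as continuous maps $H^{t}(\Gamma)\to H^{t-\alpha}(\Gamma)$ for every $t\in\mathbb{R}$ (equivalently, shifting indices, as continuous maps $H^{s+\alpha}(\Gamma)\to H^{s}(\Gamma)$); this is what will allow us to chain estimates below.

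Next, I would simply read off the gain in regularity by tracing an element $f\in H^{s}(\Gamma)$ through the composition. First $\widetilde{A}^{-1}$ sends $f$ into $H^{s-\alpha}(\Gamma)$; then, by the hypothesis on the difference and since $\widetilde{A}-A = -(A-\widetilde{A})$ has the same mapping property, $(\widetilde{A}-A)$ sends that into $H^{s-\alpha+\beta}(\Gamma)$; finally $A^{-1}$ maps this into $H^{s-\alpha+\beta-\alpha}(\Gamma)=H^{s-2\alpha+\beta}(\Gamma)$. This yields exactly the claimed continuity
\[
A^{-1}-\widetilde{A}^{-1}:H^{s}(\Gamma)\to H^{s-2\alpha+\beta}(\Gamma).
\]

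For the last assertion, note that $\widetilde{A}^{-1}$ is considered as an operator $H^{s}(\Gamma)\to H^{s-\alpha}(\Gamma)$, and the natural target space for the difference we just obtained is $H^{s-2\alpha+\beta}(\Gamma)$. Since $\beta>\alpha$, we have $-2\alpha+\beta>-\alpha$, so the inclusion $H^{s-2\alpha+\beta}(\Gamma)\hookrightarrow H^{s-\alpha}(\Gamma)$ is compact on the smooth closed manifold $\Gamma$ by the standard Rellich--Kondrachov theorem. Composing with this compact embedding shows that $A^{-1}-\widetilde{A}^{-1}$ is compact as a map $H^{s}(\Gamma)\to H^{s-\alpha}(\Gamma)$, which is the sense in which $\widetilde{A}^{-1}$ is a compact perturbation of $A^{-1}$.

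There is no real obstacle in this argument; the only point requiring a moment of care is making sure the invertibility hypothesis is interpreted at the level of each Sobolev scale so that $A^{-1}$ and $\widetilde{A}^{-1}$ can be freely shifted in index. Once this is granted, the resolvent identity reduces everything to a routine composition of continuous maps.
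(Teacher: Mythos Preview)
Your proof is correct and follows exactly the same approach as the paper: the paper's proof consists of writing down the resolvent identity $A^{-1}-\widetilde{A}^{-1}=A^{-1}(\widetilde{A}-A)\widetilde{A}^{-1}$ and invoking the continuity properties of the factors. You have simply spelled out the index-chasing and the compact embedding argument that the paper leaves implicit.
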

\begin{proof}
The result follows readily from the identity
\[
 A^{-1}- \widetilde{A}^{-1}=A^{-1}( \widetilde{A}-A)\widetilde{A}^{-1}
\]
and the continuity properties of the operators involved. 
\end{proof}

\begin{lemma}\label{lemma1}
Let $\widetilde{Y}^j_{-1},j=1,2$ and $\widetilde{Y}^j_{0},j=1,2$ be operators 
such that 
$\widetilde{Y}^j_{-1}-Y^j:H^{s}(\Gamma)\to H^{s+1}(\Gamma)$ and respectively 
$\widetilde{Y}^j_{0}-Y^j:H^{s}(\Gamma)\to H^{s}(\Gamma)$
for $j=1,2$ and for all $s$. Assume 
that the operators $(Y^1-\nu Y^2)^{-1}$, $(\widetilde{Y}^1_{-1}-\nu 
\widetilde{Y}^2_{-1})^{-1}$, and  $(\widetilde{Y}^1_{0}-\nu 
\widetilde{Y}^2_{0})^{-1}$ are well defined. Then the following mapping 
properties 
$(\widetilde{Y}_{-1}^1-\nu \widetilde{Y}_{-1}^2)^{-1}-(Y^1-\nu 
Y^2)^{-1}:H^{s}(\Gamma)\to 
H^{s+3}(\Gamma)$ and $(\widetilde{Y}_{0}^1-\nu 
\widetilde{Y}_{0}^2)^{-1}-(Y^1-\nu Y^2)^{-1}:H^{s}(\Gamma)\to 
H^{s+2}(\Gamma)$ hold.
\end{lemma}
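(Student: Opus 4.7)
The plan is to apply Lemma~\ref{lemma0} twice, once for each claim, since the hypotheses of Lemma~\ref{lemma1} are tailor-made for it. Since the Dirichlet-to-Neumann operators satisfy $Y^j:H^s(\Gamma)\to H^{s-1}(\Gamma)$, and by the assumed smoothing properties of the differences $\widetilde{Y}^j_{-1}-Y^j$ and $\widetilde{Y}^j_{0}-Y^j$, the approximants $\widetilde{Y}^j_{-1}$ and $\widetilde{Y}^j_{0}$ inherit the same basic mapping $H^s(\Gamma)\to H^{s-1}(\Gamma)$. Consequently both $A:=Y^1-\nu Y^2$ and $\widetilde{A}_{-1}:=\widetilde{Y}^1_{-1}-\nu \widetilde{Y}^2_{-1}$ (resp.\ $\widetilde{A}_{0}:=\widetilde{Y}^1_{0}-\nu \widetilde{Y}^2_{0}$) map $H^s(\Gamma)\to H^{s-1}(\Gamma)$, so the parameter $\alpha$ in Lemma~\ref{lemma0} equals $-1$ in both applications.

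For the first claim, I would take $A$ and $\widetilde{A}_{-1}$ as above and observe that
\[
A-\widetilde{A}_{-1}=(Y^1-\widetilde{Y}^1_{-1})-\nu(Y^2-\widetilde{Y}^2_{-1}):H^s(\Gamma)\to H^{s+1}(\Gamma)
\]
by hypothesis, hence $\beta=1$. The invertibility assumptions for $A$ and $\widetilde{A}_{-1}$ are exactly what Lemma~\ref{lemma0} requires, so applying it yields
\[
A^{-1}-\widetilde{A}_{-1}^{-1}:H^s(\Gamma)\to H^{s-2\alpha+\beta}(\Gamma)=H^{s+3}(\Gamma),
\]
which is the first asserted smoothing property.

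For the second claim the argument is identical with $\widetilde{A}_{0}$ in place of $\widetilde{A}_{-1}$; now
\[
A-\widetilde{A}_{0}=(Y^1-\widetilde{Y}^1_{0})-\nu(Y^2-\widetilde{Y}^2_{0}):H^s(\Gamma)\to H^{s}(\Gamma),
\]
so $\beta=0$ and Lemma~\ref{lemma0} gives $A^{-1}-\widetilde{A}_{0}^{-1}:H^s(\Gamma)\to H^{s+2}(\Gamma)$, as desired.

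There is no genuine obstacle here: the only thing to verify carefully is the value of $\alpha$, which depends on noting that $Y^j$ is of order $+1$ (losing one derivative) and that this order is preserved by the zero- and first-order smoothing perturbations. Once $\alpha=-1$ is fixed, both conclusions are immediate arithmetic in the exponent $s-2\alpha+\beta$.
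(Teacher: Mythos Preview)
Your proposal is correct and matches the paper's own proof essentially verbatim: the paper also applies Lemma~\ref{lemma0} with $A=Y^1-\nu Y^2$, $\widetilde{A}=\widetilde{Y}^1_{-1}-\nu\widetilde{Y}^2_{-1}$, $\alpha=-1$, $\beta=1$ for the first claim, and $\widetilde{A}=\widetilde{Y}^1_{0}-\nu\widetilde{Y}^2_{0}$, $\alpha=-1$, $\beta=0$ for the second.
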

\begin{proof} 
The result follows from Lemma \ref{lemma0} by taking $A= Y^1-\nu Y^2$, 
$\widetilde{A}=\widetilde{Y}_{-1}^1-\nu \widetilde{Y}_{-1}^2$ with $\alpha=-1$ 
and $\beta=1$, and respectively by taking $A= Y^1-\nu Y^2$, 
$\widetilde{A}=\widetilde{Y}_{0}^1-\nu \widetilde{Y}_{0}^2$ with $\alpha=-1$ and 
$\beta=0$. 
\end{proof}


We turn next to the important issue of constructing operators $\widetilde{Y}^j$ 
such that $\widetilde{Y}^j-Y^j:H^{s}(\Gamma)\to H^{s+1}(\Gamma)$.\\

\section{Approximations of the exterior and interior Dirichet-to-Neumann
operators\label{appY}}

From
the Green's identities \eqref{green} and  the jump relations \eqref{traces} we
obtain
\begin{eqnarray*}
\gamma_D^1 u &=& \frac{1}{2}\gamma_D^1 u + K_1 \gamma_D^1 u -
S_1 \gamma_N^1 u\\
\gamma_N^1 u &=& N_1\gamma_D^1 u + \frac{1}{2}\gamma_N^1 u - 
K_1^\top \gamma_N^1 u.
\end{eqnarray*}
Thus 
\begin{eqnarray}
S_1 Y^1&=&-\frac{I}{2}+K_1 \label{eq:Y1:01}\\
\left(\frac{1}{2}I+K_1^\top\right)Y^1&=&N_1\label{eq:Y1:02}
\end{eqnarray}
Similarly, for the interior problem it holds
\begin{eqnarray}
S_2 Y^2&=&\frac{1}{2}I+K_2 \label{eq:Y2:01}\\
\left(-\frac{1}{2}I+K_2^\top\right)Y^2
&=&N_2 \label{eq:Y2:02}.
\end{eqnarray}

The last ingredient in our analysis are the following identities:
\begin{equation}
\label{eq:calderon}
 S_{\kappa} N_{\kappa}=-\frac{1}4 I+(K_{\kappa})^2,\quad
  N_{\kappa}S_{\kappa}=-\frac{1}4 I+(K_{\kappa}^\top)^2,
 \quad N_\kappa 
K_\kappa=K_\kappa^\top N_\kappa,\quad K_\kappa S_\kappa=S_\kappa K^\top_\kappa 
\end{equation}
which is a simple consequence of the Calder\'on identity and hold for any 
$\kappa$. The construction of operators $\widetilde{Y}^j_{-1},j=1,2$ and 
$\widetilde{Y}^j_{0},j=1,2$ 
is different in two-dimensions from three dimensions on account of the enhanced 
smoothing properties of the operators $K_\kappa$ and $K_\kappa^\top$ in two 
dimensions--see Theorem~\ref{regL}. Specifically, we have the following  results

\begin{lemma}\label{lemma3}
In the case $d=2$ the operator $\widetilde{Y}^1_{-1}=2N_\kappa$ has the desired 
property: 
$\widetilde{Y}^1_{-1}-Y^1:H^{s}(\Gamma)\to H^{s+1}(\Gamma)$ for all $\kappa$ 
such that $\Re\kappa\geq 0$ and $\Im\kappa\geq 0$.
\end{lemma}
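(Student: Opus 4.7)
The plan is to split $\widetilde{Y}^1_{-1}-Y^1=2N_\kappa-Y^1$ as
\[
2N_\kappa-Y^1 \;=\; 2(N_\kappa-N_1)\;+\;(2N_1-Y^1),
\]
and to treat each summand separately. For the first summand, since $\kappa$ satisfies $\Re\kappa\geq 0$ and $\Im\kappa\geq 0$ and the same holds for the (real, positive) wavenumber $k_1$, Theorem~\ref{thmrev2} gives directly $N_\kappa-N_1:H^s(\Gamma)\to H^{s+1}(\Gamma)$ for all $s$, which is already the desired order of smoothing.

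The key step is the second summand. I would use the identity~\eqref{eq:Y1:02}, namely $(\tfrac{1}{2}I+K_1^\top)Y^1=N_1$, and rewrite it as
\[
 Y^1+2K_1^\top Y^1 \;=\; 2N_1,\qquad\text{i.e.}\qquad 2N_1-Y^1 \;=\; 2K_1^\top Y^1.
\]
Now I invoke the special feature of two dimensions: by Theorem~\ref{regL}, $K_1^\top:H^{t}(\Gamma)\to H^{t+3}(\Gamma)$ for every $t$. Combined with the (tautological) mapping property $Y^1:H^s(\Gamma)\to H^{s-1}(\Gamma)$ of the exterior Dirichlet-to-Neumann operator, the composition gives
\[
 2K_1^\top Y^1 : H^{s}(\Gamma)\;\longrightarrow\; H^{s-1}(\Gamma)\;\longrightarrow\; H^{s+2}(\Gamma),
\]
a smoothing by two orders, which is even better than what is required.

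Putting the two pieces together yields $\widetilde{Y}^1_{-1}-Y^1:H^{s}(\Gamma)\to H^{s+1}(\Gamma)$ for every $s\in\mathbb{R}$, as claimed. There is no real obstacle in this argument; the only point worth highlighting is that the construction critically exploits the three-order smoothing of $K_1^\top$ available only in two dimensions, which is precisely the reason the author singles out $d=2$ in the statement and announces a separate (more delicate) construction for $d=3$ later on.
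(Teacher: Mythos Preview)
Your proof is correct and follows essentially the same approach as the paper's: both rewrite $2N_1-Y^1=2K_1^\top Y^1$ via~\eqref{eq:Y1:02}, use the three-order smoothing of $K_1^\top$ in two dimensions together with $Y^1:H^s\to H^{s-1}$ to get $H^s\to H^{s+2}$, and then invoke Theorem~\ref{thmrev2} for $N_\kappa-N_1$. The only difference is the order in which the two summands are treated.
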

\begin{proof}
Note that from \eqref{eq:Y1:02},
\[
 Y^1=2N_1-2K_1^\top Y^1
\]
and $K_1^\top:H^{s-1}(\Gamma)\to H^{s+2}(\Gamma)$ and $Y^1:H^s(\Gamma)\to 
H^{s-1}(\Gamma)$ are continuous. Thus $Y^1-2N_1:H^{s}(\Gamma)\to 
H^{s+2}(\Gamma)\to H^{s+1}(\Gamma)$ continuously. Given that 
$N_\kappa-N_1:H^s(\Gamma)\to H^{s+1}(\Gamma)$ (see Theorem~\ref{thmrev2}), the 
result now follows. 
\end{proof}

\begin{lemma}\label{lemma4}
In the case $d=2$ the operator $\widetilde{Y}^2_{-1}=-2N_\kappa$ has the desired 
property: 
$\widetilde{Y}^2_{-1}-Y^2:H^{s}(\Gamma)\to H^{s+1}(\Gamma)$ for all $\kappa$ 
such that $\Re\kappa\geq 0$ and $\Im\kappa\geq 0$.
\end{lemma}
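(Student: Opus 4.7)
The plan is to mirror the argument of Lemma~\ref{lemma3}, using equation~\eqref{eq:Y2:02} in place of~\eqref{eq:Y1:02}. The key observation is that in two dimensions the operator $K_2^\top$ has the strong smoothing property $K_2^\top:H^{s-1}(\Gamma)\to H^{s+2}(\Gamma)$ (Theorem~\ref{regL}), which gains three orders of regularity, and this is precisely what allows us to obtain the desired one-order gain for $\widetilde{Y}^2_{-1}-Y^2$ after also accounting for the change of wavenumber via Theorem~\ref{thmrev2}.

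First I would rewrite~\eqref{eq:Y2:02} as
\[
Y^2 = -2N_2 + 2K_2^\top Y^2,
\]
so that
\[
-2N_2 - Y^2 = -2K_2^\top Y^2.
\]
Next I would chain together the mapping properties: $Y^2:H^s(\Gamma)\to H^{s-1}(\Gamma)$ continuously (from the standard mapping properties of the interior Dirichlet-to-Neumann operator for a smooth boundary), and in two dimensions $K_2^\top:H^{s-1}(\Gamma)\to H^{s+2}(\Gamma)$ by Theorem~\ref{regL}. Composing these yields
\[
-2N_2 - Y^2 : H^s(\Gamma)\to H^{s+2}(\Gamma),
\]
which in particular factors continuously through $H^{s+1}(\Gamma)$.

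Finally, I would write
\[
\widetilde{Y}^2_{-1} - Y^2 = -2N_\kappa - Y^2 = -2(N_\kappa - N_2) + (-2N_2 - Y^2).
\]
By Theorem~\ref{thmrev2}, applicable under the assumption $\Re\kappa\geq 0$ and $\Im\kappa\geq 0$, the first term maps $H^s(\Gamma)\to H^{s+1}(\Gamma)$; by the previous step the second term maps $H^s(\Gamma)\to H^{s+2}(\Gamma)\hookrightarrow H^{s+1}(\Gamma)$. Combining these gives the claimed continuity $\widetilde{Y}^2_{-1}-Y^2:H^s(\Gamma)\to H^{s+1}(\Gamma)$ for all $s\in\mathbb{R}$.

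There is essentially no real obstacle in this argument since it is structurally identical to Lemma~\ref{lemma3}; the only point that requires care is tracking the correct sign (the $-1/2$ in~\eqref{eq:Y2:02} versus the $+1/2$ in~\eqref{eq:Y1:02}) which produces the negative sign in the definition $\widetilde{Y}^2_{-1}=-2N_\kappa$, and verifying that Theorem~\ref{thmrev2} applies to the pair $(N_\kappa, N_2)$ (which it does, since $k_2>0$ satisfies $\Re k_2\geq 0$ and $\Im k_2\geq 0$).
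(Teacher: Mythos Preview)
Your proof is correct and follows exactly the approach the paper intends: the paper's own proof simply states that the argument is similar to that of Lemma~\ref{lemma3} using~\eqref{eq:Y2:02} instead, and you have carried this out in full detail, including the sign bookkeeping and the application of Theorems~\ref{regL} and~\ref{thmrev2}.
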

\begin{proof}
The proof is similar to that of Lemma \ref{lemma3} by using 
\eqref{eq:Y2:02} instead. 
\end{proof}

\begin{lemma}\label{lemma33d}
In the case $d=3$ the operator $\widetilde{Y}^1_{0}=2N_\kappa$ has the desired 
property: 
$\widetilde{Y}^1_{0}-Y^1:H^{s}(\Gamma)\to H^{s}(\Gamma)$ for all $\kappa$ such 
that $\Re\kappa\geq 0$ and $\Im\kappa\geq 0$.
\end{lemma}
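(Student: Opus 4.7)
The plan is to mirror the argument of Lemma~\ref{lemma3}, but with the mapping properties for $K_1^\top$ in three dimensions (which only gains one derivative, per Theorem~\ref{regL}) substituted for the three-derivative gain that was available when $d=2$. Since the target regularity is weaker in Case~II ($H^s\to H^s$ instead of $H^s\to H^{s+1}$), this loss turns out to be exactly what we can afford.

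Concretely, I would start from identity~\eqref{eq:Y1:02}, which rearranges to
\[
Y^1 = 2N_1 - 2K_1^\top Y^1.
\]
The continuity $Y^1:H^s(\Gamma)\to H^{s-1}(\Gamma)$ together with the three-dimensional mapping property $K_1^\top:H^{s-1}(\Gamma)\to H^s(\Gamma)$ from Theorem~\ref{regL} yields $K_1^\top Y^1:H^s(\Gamma)\to H^s(\Gamma)$, so $Y^1 - 2N_1:H^s(\Gamma)\to H^s(\Gamma)$ for every $s\in\mathbb R$.

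Next, I would absorb the change of wavenumber by invoking Theorem~\ref{thmrev2}, which gives $N_\kappa - N_1:H^s(\Gamma)\to H^{s+1}(\Gamma)$ whenever $\Re\kappa\ge 0$ and $\Im\kappa\ge 0$; since $H^{s+1}(\Gamma)\hookrightarrow H^s(\Gamma)$, this is more than enough. Combining the two estimates via
\[
\widetilde Y^1_0 - Y^1 \;=\; 2N_\kappa - Y^1 \;=\; (2N_\kappa - 2N_1) + (2N_1 - Y^1),
\]
both summands map $H^s(\Gamma)$ continuously into $H^s(\Gamma)$, giving the desired conclusion.

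I do not anticipate a genuine obstacle here: the argument is a bookkeeping exercise in composing the mapping properties already proved in Theorems~\ref{regL} and~\ref{thmrev2} with the Calder\'on-type identity~\eqref{eq:Y1:02}. The only point requiring any care is to remember that in three dimensions the regularizing effect of $K_1^\top$ drops from three derivatives to one, which is precisely why the approximation $\widetilde Y^1_0 = 2N_\kappa$ is only of order zero (as a parametrix for $Y^1$) rather than the order $-1$ accuracy obtained in the two-dimensional setting of Lemma~\ref{lemma3}.
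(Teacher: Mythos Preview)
Your proposal is correct and follows essentially the same approach as the paper: rearrange \eqref{eq:Y1:02} to $Y^1=2N_1-2K_1^\top Y^1$, use the three-dimensional mapping property $K_1^\top:H^{s-1}(\Gamma)\to H^s(\Gamma)$ together with $Y^1:H^s(\Gamma)\to H^{s-1}(\Gamma)$ to get $Y^1-2N_1:H^s(\Gamma)\to H^s(\Gamma)$, and then invoke Theorem~\ref{thmrev2} for $N_\kappa-N_1$.
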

\begin{proof}
Note that from \eqref{eq:Y1:02},
\[
 Y^1=2N_1-2K_1^\top Y^1
\]
and $K_1^\top:H^{s-1}(\Gamma)\to H^{s}(\Gamma)$ and $Y^1:H^s(\Gamma)\to 
H^{s-1}(\Gamma)$ are continuous. 
Thus $Y^1-2N_1:H^{s}(\Gamma)\to H^{s}(\Gamma)$ continuously. Given that 
$N_\kappa-N_1:H^s(\Gamma)\to H^{s+1}(\Gamma)$ (see Theorem~\ref{thmrev2}), the 
result now follows. 
\end{proof}

\begin{lemma}\label{lemma43d}
In the case $d=2$ the operator $\widetilde{Y}^2_{0}=-2N_\kappa$ has the desired 
property: 
$\widetilde{Y}^2_{0}-Y^2:H^{s}(\Gamma)\to H^{s}(\Gamma)$ for all $\kappa$ such 
that $\Re\kappa\geq 0$ and $\Im\kappa\geq 0$.
\end{lemma}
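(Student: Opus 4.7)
The plan is to observe that the statement as written is in fact an immediate corollary of Lemma \ref{lemma4}. Indeed, the operator $\widetilde{Y}^2_0 = -2N_\kappa$ is literally the same operator as $\widetilde{Y}^2_{-1}$ from Lemma \ref{lemma4}, and in the $d=2$ setting Lemma \ref{lemma4} already establishes the stronger mapping property $\widetilde{Y}^2_{-1} - Y^2 : H^s(\Gamma) \to H^{s+1}(\Gamma)$. Since the Sobolev embedding $H^{s+1}(\Gamma) \hookrightarrow H^s(\Gamma)$ is continuous, composing with this embedding gives exactly $\widetilde{Y}^2_0 - Y^2 : H^s(\Gamma) \to H^s(\Gamma)$, which is the claim. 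No further work is required.

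If one prefers to see the argument spelled out intrinsically (following the template used in Lemmas \ref{lemma3}--\ref{lemma33d}), I would write it as follows. Starting from identity \eqref{eq:Y2:02}, I solve for $Y^2$ to obtain
\[
Y^2 = -2N_2 + 2K_2^\top Y^2.
\]
In $d=2$, Theorem \ref{regL} gives $K_2^\top : H^{s-1}(\Gamma) \to H^{s+2}(\Gamma)$ continuously (three orders of smoothing in two dimensions), while $Y^2 : H^s(\Gamma) \to H^{s-1}(\Gamma)$ is continuous. Hence $2K_2^\top Y^2 : H^s(\Gamma) \to H^{s+2}(\Gamma)$, which in particular maps continuously into $H^s(\Gamma)$. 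This shows $Y^2 - (-2N_2) : H^s(\Gamma) \to H^s(\Gamma)$. Finally, Theorem \ref{thmrev2} yields $N_\kappa - N_2 : H^s(\Gamma) \to H^{s+1}(\Gamma) \hookrightarrow H^s(\Gamma)$ continuously, and combining these two bounds gives $\widetilde{Y}^2_0 - Y^2 = -2(N_\kappa - N_2) + (Y^2 + 2N_2)$ mapping $H^s(\Gamma)$ into $H^s(\Gamma)$ continuously.

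There is no genuine obstacle here: in two dimensions the smoothing properties of $K^\top_\kappa$ are stronger than in three dimensions, so the weaker $H^s \to H^s$ bound required for the $\widetilde{Y}^j_0$ variant is automatically implied by the stronger $H^s \to H^{s+1}$ bound that holds for $\widetilde{Y}^j_{-1}$. The only mild subtlety is bookkeeping the regularity indices correctly when invoking Theorem \ref{regL} and Theorem \ref{thmrev2}; one must apply the $d=2$ mapping property of $K_2^\top$ (three orders) rather than the $d=3$ one (one order), and then lose two orders of smoothing via embedding, which is painless.
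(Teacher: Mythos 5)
Your proof is correct for the statement as literally written, and your spelled-out second argument is precisely the paper's own proof: the paper simply declares the result ``similar to that of Lemma~\ref{lemma33d} by using \eqref{eq:Y2:02} instead'', i.e.\ write $Y^2=-2N_2+2K_2^\top Y^2$, bound $K_2^\top Y^2$ via Theorem~\ref{regL}, and finish with $N_\kappa-N_2:H^s(\Gamma)\to H^{s+1}(\Gamma)$ from Theorem~\ref{thmrev2}. One caveat worth flagging: the ``$d=2$'' in the statement of Lemma~\ref{lemma43d} is almost certainly a typo for $d=3$ --- the lemma is the companion of the three-dimensional Lemma~\ref{lemma33d}, its proof explicitly defers to that lemma, and the operators $\widetilde{Y}^j_0$ are only needed for the three-dimensional Case~II construction of Theorem~\ref{thm10}. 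Under that intended reading your first shortcut (deducing the claim from Lemma~\ref{lemma4} together with the embedding $H^{s+1}(\Gamma)\hookrightarrow H^s(\Gamma)$) is unavailable, since Lemma~\ref{lemma4} is a two-dimensional result; but your template argument survives verbatim once the three-order $d=2$ mapping $K_2^\top:H^{s-1}(\Gamma)\to H^{s+2}(\Gamma)$ is replaced by the one-order $d=3$ mapping $K_2^\top:H^{s-1}(\Gamma)\to H^{s}(\Gamma)$, which still gives $Y^2+2N_2:H^s(\Gamma)\to H^s(\Gamma)$. Finally, a trivial slip that does not affect the conclusion: the decomposition should read $\widetilde{Y}^2_0-Y^2=-2(N_\kappa-N_2)-(Y^2+2N_2)$, with a minus sign on the second term.
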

\begin{proof}
The proof is similar to that of Lemma \ref{lemma33d} by using 
\eqref{eq:Y2:02} instead. 
\end{proof}

\begin{lemma}\label{lemma33dm1}
In the case $d=3$ the operator 
$\widetilde{Y}^1_{-1}=2N_{\kappa_1}-4N_{\kappa_2}K_{\kappa_2}$ has the desired 
property: 
$\widetilde{Y}^1_{-1}-Y^1:H^{s}(\Gamma)\to H^{s+1}(\Gamma)$ for all $\kappa_j$ 
such that $\Re\kappa_j\geq 0$ 
and $\Im\kappa_j\geq 0$ for $j=1,2$. Also, the operator 
$\widetilde{Y}^2_{-1}=-2N_{\kappa_1}-4N_{\kappa_2}K_{\kappa_2}$ has the desired 
property: 
$\widetilde{Y}^2_{-1}-Y^2:H^{s}(\Gamma)\to H^{s+1}(\Gamma)$ for all $\kappa_j$ 
such that $\Re\kappa_j\geq 0$ and $\Im\kappa_j\geq 0$ for $j=1,2$
\end{lemma}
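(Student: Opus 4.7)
The plan is to adapt the one-step argument of Lemma~\ref{lemma3} to $d=3$, where $K_j^\top$ only smooths by one order (rather than three as in $d=2$), so that a single substitution no longer gains two orders and we must iterate the fixed-point identity twice.

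Rewriting~\eqref{eq:Y1:02} as $Y^1 = 2N_1 - 2K_1^\top Y^1$ and substituting the right-hand side back into itself gives
\[
Y^1 = 2N_1 - 4K_1^\top N_1 + 4(K_1^\top)^2 Y^1.
\]
The next step is to apply the Calder\'on identity $K_1^\top N_1 = N_1 K_1$ from~\eqref{eq:calderon} to symmetrize the second term, yielding $Y^1 = 2N_1 - 4N_1 K_1 + 4(K_1^\top)^2 Y^1$. In $d=3$ we have $Y^1:H^s(\Gamma)\to H^{s-1}(\Gamma)$ and $K_1^\top:H^{t}(\Gamma)\to H^{t+1}(\Gamma)$ by Theorem~\ref{regL}, so the remainder $4(K_1^\top)^2 Y^1$ maps $H^s(\Gamma)$ into $H^{s+1}(\Gamma)$. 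Hence $Y^1 - (2N_1 - 4N_1 K_1):H^s(\Gamma)\to H^{s+1}(\Gamma)$.

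It then remains to replace the physical wavenumbers in the approximation by the regularizing wavenumbers $\kappa_1,\kappa_2$ at the cost of at most one order of smoothing. By Theorem~\ref{thmrev2}, $2(N_{\kappa_1}-N_1):H^s(\Gamma)\to H^{s+1}(\Gamma)$. For the composition, I would split
\[
N_{\kappa_2}K_{\kappa_2} - N_1 K_1 = (N_{\kappa_2}-N_1)K_{\kappa_2} + N_1(K_{\kappa_2}-K_1)
\]
and use Theorems~\ref{regL} and~\ref{thmrev2}: the first piece maps $H^s(\Gamma)\to H^{s+2}(\Gamma)$, while the second piece factors as $K_{\kappa_2}-K_1:H^s(\Gamma)\to H^{s+2}(\Gamma)$ followed by $N_1:H^{s+2}(\Gamma)\to H^{s+1}(\Gamma)$. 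Combining these estimates yields the claim for $\widetilde{Y}^1_{-1}$.

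The argument for $\widetilde{Y}^2_{-1}$ is entirely parallel, now starting from~\eqref{eq:Y2:02} which gives $Y^2 = -2N_2 + 2K_2^\top Y^2$; one substitution combined with $K_2^\top N_2 = N_2 K_2$ produces $Y^2 = -2N_2 - 4N_2 K_2 + 4(K_2^\top)^2 Y^2$, and the same wavenumber-swapping bookkeeping finishes the proof. The main subtlety in both arguments is the careful tracking of Sobolev indices: a single iteration only delivers the weaker bound $H^s(\Gamma)\to H^s(\Gamma)$ of Lemma~\ref{lemma33d}, and it is precisely the second iteration together with the Calder\'on identity that unlocks the additional order of smoothing required here.
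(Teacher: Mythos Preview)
Your proof is correct and follows essentially the same approach as the paper: iterate the fixed-point identity from~\eqref{eq:Y1:02} twice, apply the Calder\'on identity $K_1^\top N_1 = N_1 K_1$, and then swap wavenumbers using Theorem~\ref{thmrev2}. The only cosmetic difference is the way you split $N_{\kappa_2}K_{\kappa_2}-N_1K_1$ (you factor through $K_{\kappa_2}$ and $N_1$, the paper through $K_1$ and $N_{\kappa_2}$), which is immaterial.
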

\begin{proof}
Note that from \eqref{eq:Y1:02} and \eqref{eq:calderon}
\[
 Y^1=2N_1-2K_1^\top Y^1=2N_1-4K_1^\top N_1 +4 (K_1^\top)^2 Y^1 =
 2N_1-4 N_1K_1 +4 (K_1^\top)^2 Y^1  
\]
Since $K_1:H^{s}(\Gamma)\to H^{s+1}(\Gamma)$, $Y^1:H^{s+1}(\Gamma)\to 
H^{s}(\Gamma)$, and $K_1^\top:H^{s}(\Gamma)\to H^{s+1}(\Gamma)$ are continuous, 
we get that $Y^1-2N_1+4 N_1 K_1 :H^{s}(\Gamma)\to H^{s+1}(\Gamma)$ 
continuously. Using the fact that $N_1 K_1-N_{\kappa_2} 
K_{\kappa_2}=(N_1-N_{\kappa_2})K_1+N_{\kappa_2}(K_1-K_{\kappa_2})$, and given 
that $N_{\kappa_j}-N_1:H^s(\Gamma)\to H^{s+1}(\Gamma)$ and 
$K_1-K_{\kappa_2}:H^s(\Gamma)\to H^{s+2}(\Gamma)$  (see Theorem~\ref{thmrev2}), 
the first part of the theorem now follows. The second part of the theorem 
follows similarly.  
\end{proof}

Given the choices of operators $\widetilde{Y}^j_{-1}, j=1,2$ and 
$\widetilde{Y}^j_{0}, j=1,2$ presented in 
Lemmas~\ref{lemma3}--\ref{lemma33dm1}, we focus next on using Calder\'on's 
calculus to construct operators $\widetilde{R}_{ij}^{s,s}$ and $\widetilde{R}_{ij}^{s,s-1}$ which satisfy Theorem~\ref{thm1} and 
\ref{thm10} respectively.

\section{Construction of operators $\widetilde{R}_{ij}^{s,s}$ in the 
case $d=2$\label{case1}}

Note first
\begin{eqnarray}
\mathcal{R}_1&=&
\left(\begin{array}{cc}-\nu {R}_{12} {Y}^2 & {R}_{12}\\-\nu 
{Y}^1{R}_{12} {Y}^2 & 
{Y}^1{R}_{12}\end{array}\right),\quad {R}_{12}=(Y^1-\nu Y^2)^{-1}.
\end{eqnarray} 
We construct first the operator $\widetilde{R}_{12}^{s,s}$ which 
is the 
building block for the matrix operator $\widetilde{\mathcal{R}}_1^{s,s}$. We 
assume in all 
the results that follow that the operators $(Y^1-\nu Y^2)^{-1}$ are well 
defined. We begin with the following result:

\begin{lemma}\label{lemma5}
The operator 
\[
\widetilde {R}_{12}^{s,s}=-\frac{2}{1+\nu} S_{\kappa},\quad \text{with 
} \Re(\kappa), \Im(\kappa)\ge 0. 
\]
has the property 
$\widetilde{R}_{12}^{s,s}-R_{12}:H^{s}(\Gamma)\to H^{s+3}(\Gamma)$ for all $s$. 
\end{lemma}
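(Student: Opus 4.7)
The ansatz $\widetilde{R}_{12}^{s,s} = -\frac{2}{1+\nu} S_\kappa$ is naturally motivated by combining Lemmas~\ref{lemma3} and~\ref{lemma4} with the Calder\'on identity. Those lemmas supply the two-dimensional parametrix $\widetilde{Y}^1_{-1} - \nu \widetilde{Y}^2_{-1} = 2(1+\nu) N_\kappa$ for $Y^1 - \nu Y^2$, and Calder\'on's identity~\eqref{eq:calderon} shows that $N_\kappa$ and $-4 S_\kappa$ are inverses of one another modulo the operator $4(K_\kappa^\top)^2$, which is strongly smoothing in two dimensions since Theorem~\ref{regL} provides $K_\kappa^\top:H^s(\Gamma)\to H^{s+3}(\Gamma)$. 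Thus $(2(1+\nu) N_\kappa)^{-1}$ should agree with $-\frac{2}{1+\nu} S_\kappa$ up to a large smoothing order, and combined with Lemma~\ref{lemma1} this gives the lemma.

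First, I would apply Lemma~\ref{lemma1} with $\widetilde{Y}^1_{-1} = 2N_\kappa$ and $\widetilde{Y}^2_{-1} = -2N_\kappa$, so that $\widetilde{Y}^1_{-1} - \nu \widetilde{Y}^2_{-1} = 2(1+\nu) N_\kappa$; this directly yields
\[
(2(1+\nu) N_\kappa)^{-1} - R_{12} : H^s(\Gamma) \to H^{s+3}(\Gamma).
\]
Second, I would show
\[
-\frac{2}{1+\nu} S_\kappa - (2(1+\nu) N_\kappa)^{-1} : H^s(\Gamma) \to H^{s+3}(\Gamma)
\]
by composing on the left with $2(1+\nu) N_\kappa$ and using~\eqref{eq:calderon} to reduce the computation to $-4 N_\kappa S_\kappa - I = -4(K_\kappa^\top)^2$, which is of order $-6$. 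Multiplying back by $(2(1+\nu) N_\kappa)^{-1}$, of order $+1$, shows that this difference is in fact of order $-7$, comfortably within the target. Adding the two displayed mapping properties then proves Lemma~\ref{lemma5}.

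An equivalent route that avoids pre-assuming invertibility of $N_\kappa$ (a point not explicitly addressed in the statement) is to compose directly on the left by $Y^1 - \nu Y^2$:
\[
(Y^1 - \nu Y^2)(\widetilde{R}_{12}^{s,s} - R_{12}) = -\frac{2}{1+\nu}(Y^1 - \nu Y^2) S_\kappa - I.
\]
Writing $Y^1 - \nu Y^2 = 2(1+\nu) N_\kappa + E$ with $E:H^s(\Gamma)\to H^{s+1}(\Gamma)$ via Lemmas~\ref{lemma3} and~\ref{lemma4}, and substituting~\eqref{eq:calderon}, the right-hand side reduces to $-4(K_\kappa^\top)^2 - \frac{2}{1+\nu} E S_\kappa$, which maps $H^s(\Gamma)\to H^{s+2}(\Gamma)$ --- the bottleneck coming from the remainder $E S_\kappa$, not from the Calder\'on term. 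Applying the standing mapping property $(Y^1 - \nu Y^2)^{-1}:H^{s+2}(\Gamma)\to H^{s+3}(\Gamma)$ then yields the assertion.

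The main obstacle is careful bookkeeping of Sobolev orders rather than any deep analytic point; the substantive ingredient is the two-dimensional enhanced smoothing $K_\kappa^\top:H^s(\Gamma)\to H^{s+3}(\Gamma)$ (as opposed to the generic $H^{s+1}(\Gamma)$ available in three dimensions), which is exactly what allows the single-layer operator $S_\kappa$ alone to serve as an order $+1$ parametrix for $(Y^1 - \nu Y^2)^{-1}$ with three extra orders of smoothing to spare.
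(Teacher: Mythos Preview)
Your argument is correct, but it follows a genuinely different route from the paper's. The paper does not go through the $N_\kappa$ parametrix of Lemmas~\ref{lemma3}--\ref{lemma4} at all. Instead it uses the relations~\eqref{eq:Y1:01} and~\eqref{eq:Y2:01}, i.e.\ $S_jY^j=\mp\tfrac12 I+K_j$, to write directly
\[
Y^1=-\tfrac12 S_\kappa^{-1}+L_1,\qquad Y^2=\tfrac12 S_\kappa^{-1}+L_2,
\]
with $L_1,L_2:H^s(\Gamma)\to H^{s+1}(\Gamma)$ (this uses invertibility of $S_\kappa$ for $\Im\kappa>0$, the mapping property of $K_j$ in 2D, and Theorem~\ref{thmrev2} for $S_\kappa-S_j$). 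Hence $Y^1-\nu Y^2=-\tfrac{1+\nu}{2}S_\kappa^{-1}+L_\nu$ with $L_\nu$ one order smoothing, and a single application of Lemma~\ref{lemma0} with $\alpha=-1$, $\beta=1$ gives the conclusion in one step. Your approach instead builds the parametrix of $Y^1-\nu Y^2$ out of $N_\kappa$ and then needs the Calder\'on identity to bridge back to $S_\kappa$; this is slightly more indirect but has the virtue of recycling Lemmas~\ref{lemma3}--\ref{lemma4}, and your second variant is arguably cleaner than the paper's in that it only invokes the standing hypothesis on $(Y^1-\nu Y^2)^{-1}$ rather than requiring invertibility of an auxiliary operator.
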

\begin{proof}
Choosing $\Im\kappa>0$, we have $S_{\kappa}:H^s(\Gamma)\to H^{s+1}(\Gamma)$ is 
invertible. 
Then, from~\eqref{eq:Y1:01} we deduce
\begin{eqnarray*}
 Y^1&=& S_{\kappa}^{-1} S_{\kappa}Y^1=S^{-1}_{\kappa}S_1 Y^1 
+S_{\kappa}^{-1}( S_{\kappa}-S_1) Y^1\\
&=&
 -\frac{1}2 
S_{\kappa}^{-1}+ 
S_{\kappa}^{ -1 } K_{1}+S_{\kappa}^{-1}(S_{\kappa}-S_1)Y^1=-\frac{1}2 
S_{\kappa}^{-1} +L_1. 
\end{eqnarray*}
Applying Theorems \ref{regL}  and \ref{thmrev2} one easily checks that 
$L_1:H^s(\Gamma)\to H^{s+1}(\Gamma)$ is continuous. Similarly,
\[
 Y^2=\frac{1}2 
S_{\kappa}^{-1}+S_{\kappa}^{-1} 
K_{2}+S_{\kappa}^{-1}(S_{\kappa}-S_2)Y^2=\frac{1}2 
S_{\kappa}^{-1} +L_2, 
\]
with $L_2:H^s(\Gamma)\to H^{s+1}(\Gamma)$  being again continuous for all $s$. 
Then
\[
 Y^1-\nu Y^2= -\frac{  1+\nu}{2} S^{-1}_{\kappa}+ L_\nu
\]
with $L_\nu:H^s(\Gamma)\to H^{s+1}(\Gamma)$. The proof for 
this case is finished by applying Lemma \ref{lemma0} with $A= Y^1-\nu Y^2$, 
$\widetilde{A}= -\frac{  1+\nu}{2}  S^{-1}_{\kappa}$, $\alpha=-1$ and $\beta=1$ 
.

\end{proof}

\begin{lemma}\label{lemma7}
The operator 
\[
\widetilde{R}_{11}^{s,s}=\frac{\nu}{1+\nu}I
\] has the property 
$\widetilde{R}_{11}^{s,s}-R_{11}:H^{s}(\Gamma)\to H^{s+2}(\Gamma)$ for all 
$s$. 
\end{lemma}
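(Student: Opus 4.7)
The plan is to use the expansions of the Dirichlet-to-Neumann operators already established in the proof of Lemma~\ref{lemma5} and then directly expand the product defining $R_{11}$. Recall that $R_{11}=-\nu R_{12}Y^2$ and that in the course of proving Lemma~\ref{lemma5} the following identities were obtained (for any $\kappa$ with $\Re\kappa,\Im\kappa\ge 0$ and $\Im\kappa>0$):
\[
 Y^1=-\tfrac{1}{2}S_\kappa^{-1}+L_1,\qquad Y^2=\tfrac{1}{2}S_\kappa^{-1}+L_2,
\]
with $L_j:H^{s}(\Gamma)\to H^{s+1}(\Gamma)$ continuous for every $s$. Moreover, Lemma~\ref{lemma5} itself states that
\[
 R_{12}=-\tfrac{2}{1+\nu}S_\kappa+E_{12},\qquad E_{12}:H^{s}(\Gamma)\to H^{s+3}(\Gamma).
\]

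Given these building blocks, I would multiply out
\[
 R_{11}=-\nu\Bigl(-\tfrac{2}{1+\nu}S_\kappa+E_{12}\Bigr)\Bigl(\tfrac{1}{2}S_\kappa^{-1}+L_2\Bigr)
\]
and identify the principal part. The product of the leading terms collapses by cancellation of $S_\kappa S_\kappa^{-1}=I$ to
\[
 -\nu\cdot\Bigl(-\tfrac{2}{1+\nu}\Bigr)\cdot\tfrac{1}{2}\,I=\tfrac{\nu}{1+\nu}I=\widetilde{R}_{11}^{s,s},
\]
which explains the choice of the approximation and produces the identity
\[
 \widetilde{R}_{11}^{s,s}-R_{11}
 =-\tfrac{2\nu}{1+\nu}\,S_\kappa L_2+\tfrac{\nu}{2}\,E_{12}S_\kappa^{-1}+\nu\,E_{12}L_2.
\]

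The remaining step is to check, term by term, that each summand above maps $H^{s}(\Gamma)$ into $H^{s+2}(\Gamma)$ continuously. For the first, $L_2:H^{s}(\Gamma)\to H^{s+1}(\Gamma)$ composed with $S_\kappa:H^{s+1}(\Gamma)\to H^{s+2}(\Gamma)$ by Theorem~\ref{regL} gives the required gain of two derivatives. For the second, $S_\kappa^{-1}:H^{s}(\Gamma)\to H^{s-1}(\Gamma)$ followed by $E_{12}:H^{s-1}(\Gamma)\to H^{s+2}(\Gamma)$ also lands in $H^{s+2}(\Gamma)$. The third term is even more regular, mapping into $H^{s+4}(\Gamma)$. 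Combining these three continuity estimates yields $\widetilde{R}_{11}^{s,s}-R_{11}:H^{s}(\Gamma)\to H^{s+2}(\Gamma)$ for all $s$, as required.

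The argument involves no real obstacle: the heavy lifting has already been done in Lemma~\ref{lemma5}, and the conclusion reduces to an elementary algebraic cancellation producing the constant $\frac{\nu}{1+\nu}$ plus a straightforward bookkeeping of Sobolev indices using Theorems~\ref{regL} and~\ref{thmrev2}. The mildly delicate point is to make sure that the composition $E_{12}S_\kappa^{-1}$ is interpreted correctly: since $S_\kappa^{-1}$ loses one derivative while $E_{12}$ gains three, their composition gains exactly two, which is precisely what the target regularity demands and, retrospectively, justifies why $\frac{\nu}{1+\nu}I$ is the right choice rather than a more elaborate expression involving $S_\kappa$ or $K_\kappa$.
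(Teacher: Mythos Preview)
Your proof is correct and follows essentially the same strategy as the paper: expand $R_{11}=-\nu R_{12}Y^2$ using the approximation of $R_{12}$ from Lemma~\ref{lemma5} together with a decomposition of $Y^2$, isolate the principal part $\frac{\nu}{1+\nu}I$, and verify the remainder maps $H^s(\Gamma)\to H^{s+2}(\Gamma)$. The only cosmetic difference is that the paper computes $S_\kappa Y^2$ by writing it as $S_2Y^2+(S_\kappa-S_2)Y^2$ and invoking the identity $S_2Y^2=\tfrac{1}{2}I+K_2$ from~\eqref{eq:Y2:01}, whereas you use the equivalent decomposition $Y^2=\tfrac{1}{2}S_\kappa^{-1}+L_2$ already extracted in the proof of Lemma~\ref{lemma5}; the resulting remainder terms and their Sobolev mapping estimates coincide.
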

\begin{proof} 
We note that
\begin{eqnarray*}
R_{11}&=&-\nu R_{12} Y^2=-\nu \widetilde{R}_{12}^{s,s}Y^2+\nu 
(\widetilde{R}_{12}^{s,s}-R_{12}) Y^2\\
&=&\frac{2\nu }{1+\nu} S_2 Y^2+\frac{2\nu}{1+\nu} (S_{\kappa}-S_2) Y^2 +\nu 
(\widetilde{R}_{12}^{s,s}-R_{12}) 
Y^2\\
&=& \frac{\nu}{1+\nu} I+\frac{2\nu}{1+\nu} K_2+\frac{2\nu}{1+\nu} 
(S_{\kappa}-S_2) Y^2 +\nu 
(\widetilde{R}_{12}^{s,s}-R_{12}) 
Y^2=:\frac{\nu}{1+\nu} I + L_{11}
\end{eqnarray*}
where we have applied Lemma \ref{lemma5} and \eqref{eq:Y2:01}. It is 
straightforward to check, using Theorems  \ref{regL} and  \ref{thmrev2} with  
Lemma \ref{lemma5}, that $L_{11}:H^s(\Gamma)\to H^{s+2}(\Gamma)$ is continuous. 
The result is now proven.
\end{proof}

\begin{lemma}\label{lemma8}
The operator $$\widetilde{R}_{21}^{s,s}=\frac{2\nu}{1+\nu}N_{\kappa}$$ has 
the property $\widetilde{R}_{21}^{s,s}-R_{21}:H^{s}(\Gamma)\to H^{s+1}(\Gamma)$ 
for all 
$s$. 
\end{lemma}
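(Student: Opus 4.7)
The plan is to build $\widetilde{R}_{21}^{s,s}$ in the same spirit as Lemma~\ref{lemma7}, namely by exploiting the factorization $R_{21}=Y^1R_{11}$ that is apparent from the explicit form~\eqref{matrixR} of $\mathcal{R}_1$, and then replacing each factor by the approximation already constructed. Concretely, I would set
\[
\widetilde{R}_{21}^{s,s}:=\widetilde{Y}^1_{-1}\,\widetilde{R}_{11}^{s,s},
\]
where $\widetilde{Y}^1_{-1}=2N_\kappa$ from Lemma~\ref{lemma3} and $\widetilde{R}_{11}^{s,s}=\frac{\nu}{1+\nu}I$ from Lemma~\ref{lemma7}. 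This product collapses to $\frac{2\nu}{1+\nu}N_\kappa$, which matches the operator stated in Lemma~\ref{lemma8}.

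The regularity estimate then follows from the standard add-and-subtract trick
\[
\widetilde{R}_{21}^{s,s}-R_{21}=\bigl(\widetilde{Y}^1_{-1}-Y^1\bigr)\widetilde{R}_{11}^{s,s}+Y^1\bigl(\widetilde{R}_{11}^{s,s}-R_{11}\bigr).
\]
For the first summand, $\widetilde{R}_{11}^{s,s}$ is a scalar multiple of the identity, so it maps $H^s(\Gamma)\to H^s(\Gamma)$ continuously, and Lemma~\ref{lemma3} then lifts the result into $H^{s+1}(\Gamma)$. For the second summand, Lemma~\ref{lemma7} provides two orders of smoothing, $\widetilde{R}_{11}^{s,s}-R_{11}:H^s(\Gamma)\to H^{s+2}(\Gamma)$, and the Dirichlet-to-Neumann operator $Y^1:H^{s+2}(\Gamma)\to H^{s+1}(\Gamma)$ absorbs the one order that it loses. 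Both summands therefore map $H^s(\Gamma)\to H^{s+1}(\Gamma)$, which is precisely the claim.

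The only subtle point — and what explains the two-order gain required of $\widetilde{R}_{11}^{s,s}-R_{11}$ in Lemma~\ref{lemma7} — is that $Y^1$ is present in the exact expression for $R_{21}$ but is \emph{not} available with improved regularity; the factor $Y^1$ necessarily costs one order, so we must feed it something already smoothed by two orders to end up net-smoothed by one. No other obstacle is anticipated, since all required continuity statements for $S_k$, $K_k$, $N_k$ and their wavenumber differences are in place via Theorems~\ref{regL} and~\ref{thmrev2}.
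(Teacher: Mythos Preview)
Your proof is correct and follows essentially the same approach as the paper. The paper also writes $R_{21}=Y^1R_{11}$, splits into $Y^1\widetilde{R}_{11}^{s,s}$ plus $Y^1(R_{11}-\widetilde{R}_{11}^{s,s})$, and then approximates $Y^1$ by $2N_1$ (later replaced by $2N_\kappa$ via Theorem~\ref{thmrev2}); your version simply folds the last step into the first by using $\widetilde{Y}^1_{-1}=2N_\kappa$ directly, which is a cosmetic difference.
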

\begin{proof}
Observe that
\begin{eqnarray*}
R_{21}&=&Y^1R_{11}=-2N_1\widetilde{R}_{11}^{s,s}+(Y^1+2N_1)\widetilde{R}_{11}^{s
,s}+Y^1(R_{11}
-\widetilde{R}_{11}^{s,s})=-\frac{2\nu}{1+\nu} N_1 + L_{21}
\end{eqnarray*}
From Lemmas \ref{lemma3} and \ref{lemma7} we deduce that 
$L_{21}:H^{s}(\Gamma)\to H^{s+1}(\Gamma)$ is continuous. 

The proof is finished once we make use of Theorem \ref{thmrev2} which states 
that 
$N_1-N_{\kappa}:H^{s}(\Gamma)\to H^{s+1}(\Gamma)$ is also continuous.
\end{proof}

Finally, we establish the following result
\begin{lemma}\label{lemma9}
The operator 
\[
\widetilde{R}_{22}^{s,s}=\frac{1}{1+\nu}I 
\]
has the property 
$\widetilde{R}_{22}^{s,s}-R_{22}:H^{s}(\Gamma)\to H^{s+2}(\Gamma)$ for all 
$s$. 
\end{lemma}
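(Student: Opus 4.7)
The plan is to show $R_{22}-\frac{1}{1+\nu}I:H^s(\Gamma)\to H^{s+2}(\Gamma)$ by starting from the representation $R_{22}=Y^1 R_{12}$ (which follows from $\mathcal{R}_1=(R_{ij})$ in equation~\eqref{matrixR} together with $R_{21}=Y^1 R_{11}$ and the analogous relation obtained by composing the first row of $\mathcal{R}_1$ on the left by $Y^1$). We then successively replace $R_{12}$ by $\widetilde R_{12}^{s,s}=-\frac{2}{1+\nu}S_\kappa$ (using Lemma~\ref{lemma5}), replace $Y^1$ by $2N_1$ (using the identity $Y^1-2N_1=-2K_1^\top Y^1$ from the proof of Lemma~\ref{lemma3}), replace $S_\kappa$ by $S_1$ (using Theorem~\ref{thmrev2}), and finally invoke the Calder\'on identity $N_1 S_1=-\frac{1}{4}I+(K_1^\top)^2$ from~\eqref{eq:calderon}. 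The crucial feature that makes this work in two dimensions is Theorem~\ref{regL}, which gives $K_1^\top:H^s(\Gamma)\to H^{s+3}(\Gamma)$, so $(K_1^\top)^2$ is arbitrarily smoothing.

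First, I would write
\[
R_{22}-\tfrac{1}{1+\nu}I
=Y^1(R_{12}-\widetilde R_{12}^{s,s})-\tfrac{2}{1+\nu}\bigl(Y^1 S_\kappa+\tfrac{1}{2}I\bigr).
\]
The first term on the right maps $H^s(\Gamma)\to H^{s+2}(\Gamma)$ because Lemma~\ref{lemma5} gives $R_{12}-\widetilde R_{12}^{s,s}:H^s(\Gamma)\to H^{s+3}(\Gamma)$ and $Y^1:H^{s+3}(\Gamma)\to H^{s+2}(\Gamma)$ is continuous. Thus everything reduces to showing $Y^1 S_\kappa+\tfrac12 I:H^s(\Gamma)\to H^{s+2}(\Gamma)$.

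For this, I would decompose
\[
Y^1 S_\kappa=(Y^1-2N_1)S_\kappa+2N_1(S_\kappa-S_1)+2N_1 S_1,
\]
and handle each term. The first is $H^s\to H^{s+3}$ by composing $S_\kappa:H^s\to H^{s+1}$ (Theorem~\ref{regL}) with $Y^1-2N_1:H^{s+1}\to H^{s+3}$ (which follows from $Y^1-2N_1=-2K_1^\top Y^1$, the enhanced two-dimensional bound $K_1^\top:H^s\to H^{s+3}$, and $Y^1:H^{s+1}\to H^s$). The second is $H^s\to H^{s+2}$ from Theorem~\ref{thmrev2} applied to $S_\kappa-S_1:H^s\to H^{s+3}$ combined with $N_1:H^{s+3}\to H^{s+2}$. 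The third, by the Calder\'on identity~\eqref{eq:calderon}, equals $-\frac{1}{2}I+2(K_1^\top)^2$, and $(K_1^\top)^2:H^s\to H^{s+6}$ in two dimensions, a much stronger gain than we need. Assembling these pieces yields $Y^1 S_\kappa+\frac{1}{2}I:H^s(\Gamma)\to H^{s+2}(\Gamma)$, and hence the desired conclusion.

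The only delicate part is keeping track of the mapping orders and making sure every composition is continuous with the claimed gain of two derivatives; no genuine obstacle appears because the two-dimensional smoothing of $K_k^\top$ by three orders is generous relative to the single-order gain we ultimately need. The proof is essentially parallel in spirit to those of Lemmas~\ref{lemma7} and~\ref{lemma8}, with the Calder\'on identity playing the role that the jump relation $S_2 Y^2=\frac{1}{2}I+K_2$ played in Lemma~\ref{lemma7}.
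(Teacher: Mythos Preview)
Your proof is correct and follows essentially the same route as the paper: both start from $R_{22}=Y^1R_{12}$, replace $R_{12}$ by $\widetilde{R}_{12}^{s,s}=-\frac{2}{1+\nu}S_\kappa$ via Lemma~\ref{lemma5}, approximate $Y^1$ by twice a hypersingular operator, and extract the identity term through the Calder\'on relation $NS=-\frac14 I+(K^\top)^2$. The only cosmetic difference is that the paper applies Calder\'on at the complex wavenumber $\kappa$ (so $N_\kappa S_\kappa$ appears directly and no $S_\kappa-S_1$ correction is needed), whereas you route through $N_1S_1$ and pick up the extra term $2N_1(S_\kappa-S_1)$; both choices give the required two orders of smoothing.
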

\begin{proof} 
Note that
\begin{eqnarray*}
{R}_{22}&=&Y^1 R_{12}=2N_{\kappa} 
R_{12}+(Y^1-2N_{\kappa})R_{12}\\
&=&-\frac{4}{1+\nu} N_{\kappa}S_{\kappa}
+
2N_\kappa(R_{12}-\widetilde{R}_{12}^{s,s})+ 
(Y^1-2N_{1})R_{12} +2(N_{1}-N_{\kappa})R_{12}\\
&=&\frac{1}{1+\nu}I -\frac{4}{1+\nu}(K_{\kappa}^\top)^2+2N_\kappa(R_{12}-\widetilde{R}_{12}^{s,s})+ (Y^1-2N_{1})R_{12} 
+2(N_{1}-N_{\kappa})R_{12}\\
&=:&\frac{1}{1+\nu}I +L_{22}
\end{eqnarray*}
where we have used Lemma \ref{lemma5} and the second Calder\'on identity in 
\eqref{eq:calderon}. Since $L_{22}:H^{s}(\Gamma)\to 
H^{s+2}(\Gamma)$ is continuous, the proof is finished.
\end{proof}

In conclusion, the matrix operator $\mathcal{\widetilde{R}}_1^{s,s}$ takes on 
the form
\begin{equation}
\label{eq:tildR1}
\begin{array}{rclrcl} 
\widetilde{R}_{11}^{s,s}&=&\displaystyle\frac{\nu}{1+\nu}I,\quad&
\widetilde{R}_{12}^{s,s}&=&\displaystyle-\frac{2}{1+\nu}S_{\kappa}\\[1.25ex]
\widetilde{R}_{21}^{s,s}&=&\displaystyle\frac{2\nu}{1+\nu}N_{\kappa},\quad&
\widetilde{R}_{22}^{s,s}&=&\displaystyle\frac{1}{1+\nu}I,
\end{array}
\end{equation}
where $\kappa$ is a wavenumber such that $\Re(\kappa)\geq 0$ and $\Im(\kappa)> 
0$ satisfies the assumptions in Theorem~\ref{thm1}. 
Thus, the matrix operator that enters the GCSIE 
formulation~\eqref{Levadoux_simpl} that uses the operator 
$\mathcal{\widetilde{R}}_1^{s,s}$ defined in equation~\eqref{eq:tildR1} is a 
compact perturbation of the identity matrix in the space $H^s(\Gamma)\times 
H^s(\Gamma)$. We established the latter result on the assumption that the 
operators $(Y^1-\nu Y^2)^{-1}$ and $Y^2$ are well defined. This assumption was needed to 
provide a thorough justification of the steps that led to the construction of 
the operator $\mathcal{\widetilde{R}}_1^{s,s}$. It turns out that this 
assumption is not essential, as we prove

\begin{theorem}\label{thm2}
Let $d=2$ and let us denote by $\mathcal{\widetilde{D}}^{s,s}$ the operator in 
the left-hand-side of 
equation~\eqref{Levadoux_simpl} with $\mathcal{\widetilde{R}}_1$ being the 
operator 
$\mathcal{\widetilde{R}}_1^{s,s}$ defined in equation~\eqref{eq:tildR1}. Then 
the operator $\mathcal{\widetilde{D}}^{s,s}$ is a 
compact perturbation of the identity matrix in the space $H^{s}(\Gamma)\times 
H^{s}(\Gamma)$ for all $s$. 
\end{theorem}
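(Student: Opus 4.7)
The plan is to bypass the invertibility hypotheses of Theorem~\ref{thm1} by observing that $\widetilde{\mathcal{R}}_1^{s,s}$ in \eqref{eq:tildR1} is given by an explicit closed-form expression involving only the identity, $S_\kappa$, and $N_\kappa$ with $\Im\kappa>0$. None of these operators requires the admittance operators $Y^j$ or the inverse $(Y^1-\nu Y^2)^{-1}$ to be well defined, so the resulting $\widetilde{\mathcal{D}}^{s,s}$ is an unconditionally bounded operator on $H^s(\Gamma)\times H^s(\Gamma)$, and I only need to verify that it is a compact perturbation of the identity by direct calculation.

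First I would substitute \eqref{eq:tildR1} into \eqref{Levadoux_simpl} and simplify so that only the classical boundary integral operators $S_j,N_j,K_j,K_j^\top$ (for $j=1,2$) together with $S_\kappa$ and $N_\kappa$ appear. After elementary algebra the off-diagonal entries reduce to
\[
\widetilde{D}_{12}^{s,s}=\tfrac{1}{1+\nu}(S_2-S_1)-\tfrac{2}{1+\nu}(K_1+K_2)S_\kappa,\qquad
\widetilde{D}_{21}^{s,s}=\tfrac{\nu}{1+\nu}(N_1-N_2)-\tfrac{2\nu}{1+\nu}(K_1^\top+K_2^\top)N_\kappa,
\]
both of which are immediately seen to be smoothing: the differences $S_2-S_1$ and $N_1-N_2$ are handled by Theorem~\ref{thmrev2}, and in $d=2$ the operators $K_j$ and $K_j^\top$ are smoothing of order three by Theorem~\ref{regL}, so the composite terms gain several orders of regularity.

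For the diagonal entries the cancellation is less immediate and this is the step where Calder\'on's calculus is essential. I would write $S_j N_\kappa = S_\kappa N_\kappa +(S_j-S_\kappa)N_\kappa$ and $N_j S_\kappa=N_\kappa S_\kappa+(N_j-N_\kappa)S_\kappa$, then use \eqref{eq:calderon} to replace $S_\kappa N_\kappa$ and $N_\kappa S_\kappa$ by $-\tfrac{1}{4}I$ plus $K_\kappa^2$ or $(K_\kappa^\top)^2$, both of which are smoothing of order six in $d=2$, while Theorem~\ref{thmrev2} controls $(S_j-S_\kappa)N_\kappa$ and $(N_j-N_\kappa)S_\kappa$. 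Substituting into $\widetilde{D}_{11}^{s,s}$ and $\widetilde{D}_{22}^{s,s}$ and collecting constants, the coefficient of $I$ becomes $\tfrac{1}{2}+\tfrac{1}{2}=1$, while every remaining term is either a smoothing combination of $K_j,K_j^\top,K_\kappa^2,(K_\kappa^\top)^2$ or a smoothing difference of operators at different wavenumbers.

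The argument is essentially bookkeeping; the only delicate point is arranging the arithmetic so that the $\tfrac{1}{2}I$ coming from the jump relations in \eqref{Levadoux_simpl} and the $-\tfrac{1}{4}I$ contributions coming from Calder\'on's identities combine to produce exactly the identity on the diagonal. This is guaranteed by construction, since \eqref{eq:tildR1} was designed so that $\widetilde{\mathcal{R}}_1^{s,s}$ approximates $\mathcal{R}_1$ in the sense required by Theorem~\ref{thm1} whenever $\mathcal{R}_1$ is defined; the present argument simply shows that the approximation still produces a compact perturbation of the identity even when $\mathcal{R}_1$ is not. Once every entry of $\widetilde{\mathcal{D}}^{s,s}-I$ is shown to map $H^s(\Gamma)$ into $H^{s+1}(\Gamma)$ or better, the compact embedding $H^{s+1}(\Gamma)\hookrightarrow H^s(\Gamma)$ yields the claim for every $s\in\mathbb{R}$.
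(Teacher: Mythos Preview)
Your proposal is correct and follows essentially the same approach as the paper: substitute \eqref{eq:tildR1} into \eqref{Levadoux_simpl}, use Calder\'on's identities \eqref{eq:calderon} to extract the identity on the diagonal, and invoke Theorems~\ref{regL} and~\ref{thmrev2} for the remaining terms. Your off-diagonal expressions for $\widetilde{D}_{12}^{s,s}$ and $\widetilde{D}_{21}^{s,s}$ coincide verbatim with the paper's, and for $\widetilde{D}_{11}^{s,s}$ the only cosmetic difference is that the paper writes $S_jN_\kappa=S_j(N_\kappa-N_j)+S_jN_j$ and applies Calder\'on at the physical wavenumbers $k_j$, whereas you write $S_jN_\kappa=(S_j-S_\kappa)N_\kappa+S_\kappa N_\kappa$ and apply Calder\'on at $\kappa$; either decomposition yields the coefficient $1$ in front of $I$ and smoothing remainders.
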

\begin{proof} We make use of Calder\'on's identities~\eqref{eq:calderon} to 
express each of the components of the 
matrix operator $\mathcal{\widetilde{D}}^{s,s}$ in the form
\begin{eqnarray*}
\left(\begin{array}{cc}\widetilde{D}_{11}^{s,s}&\widetilde{D}_{12}^{s,s}
\\\widetilde{D}_{21}^{s,s}&\widetilde{D}_{22}^{s,s}\end{array}
\right)\left(\begin{array}{c}a\\b\end{array}\right)&=&-\left(\begin{array}{c}
\gamma_D^1 u^{inc}\\\gamma_N^1 u^{inc}\end{array}\right)
\end{eqnarray*}
where
\begin{eqnarray}
\widetilde{D}_{11}^{s,s}&=&I-\frac{1}{1+\nu}K_2+\frac{\nu}{1+\nu}K_1-\frac{2\nu}
{1+\nu}S_1(N_{\kappa_1}-N_1)-\frac{2\nu}{1+\nu}(K_1)^2\nonumber\\
&&-\frac{2}{1+\nu}S_2(N_{\kappa_1}-N_2)-\frac{2}{1+\nu}(K_2)^2\nonumber\\
\widetilde{D}_{12}^{s,s}&=&\frac{1}{1+\nu}(S_2-S_1)-\frac{2}{1+\nu}(K_1+K_2)S_{
\kappa_1}\nonumber\\
\widetilde{D}_{21}^{s,s}&=&\frac{\nu}{1+\nu}(N_1-N_2)-\frac{2\nu}{1+\nu}
(K_1^\top+K_2^\top)N_{\kappa_1}\nonumber\\
\widetilde{D}_{22}^{s,s}&=&I+\frac{\nu}{1+\nu}K_2^\top-\frac{1}{1+\nu}
K_1^\top-\frac{2}{1+\nu}(N_1-N_{\kappa})S_{\kappa}\nonumber\\
&&-\frac{2\nu}{1+\nu}(N_2-N_{\kappa})S_{\kappa}-2(K_{\kappa}^\top)^2.\label{eq:matrix_explicit}
\end{eqnarray}
The result now follows from the mapping properties recounted in 
Theorem~\ref{regL} and Theorem~\ref{thmrev2}.
\end{proof}

\section{Construction of operators $\widetilde{R}_{ij}^{s,s-1}$ in the 
case $d=3$.\label{case2}}  
We begin again with the construction of the operator 
$\widetilde{R}_{12}^{s,s-1}$ under the assumption that the operator
$(Y^1-\nu Y^2)^{-1}$ is well defined. We begin with the following result:

\begin{lemma}\label{lemma53dm1}
The operator 
\[
\widetilde {R}_{12}^{s,s-1}=-\frac{2}{1+\nu} S_{\kappa},\quad \text{with 
} \Re(\kappa), \Im(\kappa)> 0. 
\]
has the property 
$\widetilde{R}_{12}^{s,s-1}-R_{12}:H^{s}(\Gamma)\to H^{s+2}(\Gamma)$ for all 
$s$. 
\end{lemma}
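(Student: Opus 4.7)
The plan is to mirror the proof of Lemma \ref{lemma5}, accounting for the fact that in three dimensions the double layer operator $K_1$ smooths only by one order (Theorem \ref{regL}) rather than by three as in two dimensions. This single difference is precisely what drops the final regularity gain from $+3$ to $+2$.

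First, since $\Im(\kappa)>0$, the operator $S_\kappa:H^s(\Gamma)\to H^{s+1}(\Gamma)$ is invertible. Multiplying \eqref{eq:Y1:01} on the left by $S_\kappa^{-1}$ and inserting the difference $S_\kappa-S_1$ on the right gives the decomposition
\[
Y^1 = -\tfrac{1}{2}\, S_\kappa^{-1} + S_\kappa^{-1} K_1 + S_\kappa^{-1}(S_\kappa-S_1)Y^1 =: -\tfrac{1}{2}\, S_\kappa^{-1} + L_1,
\]
and analogously, from \eqref{eq:Y2:01},
\[
Y^2 = \tfrac{1}{2}\, S_\kappa^{-1} + L_2.
\]

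Next I would track the smoothing of $L_j$ by composing the mapping properties from Theorems \ref{regL} and \ref{thmrev2}. For $L_1$, the first summand factors as $H^s\to H^{s+1}\to H^s$ (first $K_1$, then $S_\kappa^{-1}$), gaining zero orders; the second summand factors as $H^s\to H^{s-1}\to H^{s+2}\to H^{s+1}$ (first $Y^1$, then $S_\kappa-S_1$, then $S_\kappa^{-1}$), gaining one order. The first summand is the bottleneck, so $L_1:H^s(\Gamma)\to H^s(\Gamma)$ continuously, and the same reasoning yields $L_2:H^s(\Gamma)\to H^s(\Gamma)$. Combining,
\[
Y^1-\nu Y^2 = -\tfrac{1+\nu}{2}\, S_\kappa^{-1} + L_\nu, \qquad L_\nu : H^s(\Gamma)\to H^s(\Gamma).
\]

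To conclude, I would invoke Lemma \ref{lemma0} with $A=Y^1-\nu Y^2$, $\widetilde{A}=-\tfrac{1+\nu}{2}\, S_\kappa^{-1}$, $\alpha=-1$, and $\beta=0$ (the hypothesis $\beta>\alpha$ is met). Since $\widetilde{A}^{-1}=-\tfrac{2}{1+\nu}\, S_\kappa = \widetilde{R}_{12}^{s,s-1}$ by construction, Lemma \ref{lemma0} delivers
\[
\widetilde{R}_{12}^{s,s-1}-R_{12} = \widetilde{A}^{-1}-A^{-1} : H^s(\Gamma)\to H^{s-2\alpha+\beta}(\Gamma)=H^{s+2}(\Gamma),
\]
as claimed. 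There is no substantive obstacle beyond bookkeeping the reduced smoothing of $K_1$ in three dimensions; this same loss is what makes Case II the natural functional setting here, since a single application of $S_\kappa$ is no longer strong enough to regularize $R_{12}$ by three orders as it did in two dimensions.
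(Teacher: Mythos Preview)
Your proof is correct and follows essentially the same route as the paper's own argument: the paper explicitly says it reuses the decomposition from Lemma~\ref{lemma5} with three-dimensional operators, observes that $L_1,L_2:H^s(\Gamma)\to H^s(\Gamma)$ (rather than $H^{s+1}$) because $K_j$ gains only one order in $d=3$, and then applies Lemma~\ref{lemma0} with $\alpha=-1$, $\beta=0$. Your bookkeeping of the two summands in $L_1$ and the invocation of Lemma~\ref{lemma0} match the paper's reasoning exactly.
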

\begin{proof}
Choosing $\Im\kappa>0$, we have $S_{\kappa}:H^s(\Gamma)\to H^{s+1}(\Gamma)$ is 
invertible. 
Then, just in the proof of Lemma~\ref{lemma5} we get $Y^1=-\frac{1}2 
S_{\kappa}^{-1} +L_1$, where $L_1$ has the same
definition as in Lemma~\ref{lemma5} with three dimensional boundary integral 
operators instead. Applying Theorems \ref{regL}  and \ref{thmrev2} one easily 
checks that 
$L_1:H^s(\Gamma)\to H^{s}(\Gamma)$ is continuous. Similarly $Y^2=\frac{1}2 
S_{\kappa}^{-1} +L_2$ with $L_2:H^s(\Gamma)\to H^{s}(\Gamma)$  being again 
continuous for all $s$. 
Then
\[
 Y^1-\nu Y^2= -\frac{  1+\nu}{2} S^{-1}_{\kappa}+ L_\nu
\]
with $L_\nu:H^s(\Gamma)\to H^{s}(\Gamma)$. The proof for 
this case is finished by applying Lemma \ref{lemma0} with $A= Y^1-\nu Y^2$, 
$\widetilde{A}= -\frac{  1+\nu}{2}  S^{-1}_{\kappa}$, $\alpha=-1$ and $\beta=0$.
\end{proof}

The next three results can be established along the same lines as the results in Lemma~\ref{lemma7}, Lemma~\ref{lemma8}, and Lemma~\ref{lemma9}:
\begin{lemma}\label{lemma73dm1}
The operator 
\[
\widetilde{R}_{11}^{s,s-1}=\frac{\nu}{1+\nu}I
\] has the property 
$\widetilde{R}_{11}^{s,s-1}-R_{11}:H^{s}(\Gamma)\to H^{s+1}(\Gamma)$ for all 
$s$. 
\end{lemma}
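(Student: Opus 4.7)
The plan is to imitate the argument of Lemma~\ref{lemma7}, substituting the three-dimensional ingredient $\widetilde{R}_{12}^{s,s-1}=-\tfrac{2}{1+\nu}S_\kappa$ from Lemma~\ref{lemma53dm1} in place of its two-dimensional counterpart and replacing every use of the two-dimensional mapping properties of $K_k,K_k^\top$ by the weaker three-dimensional bounds from Theorem~\ref{regL}. Concretely, I would start from the identity $R_{11}=-\nu R_{12}Y^2$ already used in Lemma~\ref{lemma7} and write
\[
R_{11}=-\nu\widetilde{R}_{12}^{s,s-1}Y^2+\nu\bigl(\widetilde{R}_{12}^{s,s-1}-R_{12}\bigr)Y^2=\tfrac{2\nu}{1+\nu}S_\kappa Y^2+\nu\bigl(\widetilde{R}_{12}^{s,s-1}-R_{12}\bigr)Y^2.
\]

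Next I would split $S_\kappa Y^2=S_2Y^2+(S_\kappa-S_2)Y^2$ and invoke the Calder\'on-type relation~\eqref{eq:Y2:01}, i.e.\ $S_2Y^2=\tfrac{1}{2}I+K_2$, to peel off the constant part:
\[
R_{11}-\tfrac{\nu}{1+\nu}I=\tfrac{2\nu}{1+\nu}K_2+\tfrac{2\nu}{1+\nu}(S_\kappa-S_2)Y^2+\nu\bigl(\widetilde{R}_{12}^{s,s-1}-R_{12}\bigr)Y^2.
\]
It then remains to check that each of the three terms on the right-hand side maps $H^s(\Gamma)\to H^{s+1}(\Gamma)$. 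The term $K_2$ gains exactly one order in three dimensions by Theorem~\ref{regL}. For the second term, Theorem~\ref{thmrev2} gives $S_\kappa-S_2:H^s(\Gamma)\to H^{s+3}(\Gamma)$, which composed with $Y^2:H^s(\Gamma)\to H^{s-1}(\Gamma)$ yields a gain of two orders, more than enough. For the third term, Lemma~\ref{lemma53dm1} gives $\widetilde{R}_{12}^{s,s-1}-R_{12}:H^s(\Gamma)\to H^{s+2}(\Gamma)$, and post-composition with $Y^2$ again yields a gain of one order.

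The only step that is in any way delicate is identifying the binding constraint: it is the $K_2$ term, whose three-dimensional smoothing of just one order is what forces the conclusion to be one derivative weaker than in Lemma~\ref{lemma7}. Because of this, the other two terms can afford the extra losses coming from composition with $Y^2$; no further refinement of $\widetilde{R}_{12}^{s,s-1}$ would improve the outcome of Lemma~\ref{lemma73dm1} while $K_2$ is in play. No genuine obstacle is expected, as the proof is essentially a bookkeeping exercise once the 2D Calder\'on calculation is in hand.
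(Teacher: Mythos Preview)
Your proposal is correct and follows exactly the route the paper intends: the paper states that Lemma~\ref{lemma73dm1} is proved ``along the same lines'' as Lemma~\ref{lemma7}, and your write-up is precisely that argument with the two-dimensional smoothing of $K_2$ replaced by the three-dimensional one from Theorem~\ref{regL} and with Lemma~\ref{lemma53dm1} in place of Lemma~\ref{lemma5}. The identification of $K_2$ as the binding term is also right.
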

\begin{lemma}\label{lemma83dm1}
The operator $$\widetilde{R}_{21}^{s,s-1}=\frac{2\nu}{1+\nu}N_{\kappa}$$ has 
the property $\widetilde{R}_{21}^{s,s-1}-R_{21}:H^{s}(\Gamma)\to H^{s}(\Gamma)$ 
for all 
$s$. 
\end{lemma}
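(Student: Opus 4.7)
The approach is to mimic the 2D proof in Lemma~\ref{lemma8} but replace the two-dimensional smoothing estimates with their three-dimensional counterparts, where $K_j^\top$ only gains one derivative and $\widetilde{Y}^1_{0}=2N_{\kappa}$ only approximates $Y^1$ to order zero (Lemma~\ref{lemma33d}). The target estimate $\widetilde{R}_{21}^{s,s-1}-R_{21}:H^{s}(\Gamma)\to H^{s}(\Gamma)$ asks for one order of smoothing beyond the mapping property $R_{21}:H^{s}(\Gamma)\to H^{s-1}(\Gamma)$, which is exactly the order gained by a single application of Lemma~\ref{lemma33d}.

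First I would start from the identity $R_{21}=Y^1R_{11}$ derived in the discussion leading to \eqref{matrixR}, and decompose it around the natural approximant. Writing $\widetilde{R}_{11}^{s,s-1}=\frac{\nu}{1+\nu}I$ and adding/subtracting $2N_{\kappa}\widetilde{R}_{11}^{s,s-1}$ yields
\[
R_{21}=2N_{\kappa}\,\widetilde{R}_{11}^{s,s-1}+(Y^1-2N_{\kappa})\,\widetilde{R}_{11}^{s,s-1}+Y^1\bigl(R_{11}-\widetilde{R}_{11}^{s,s-1}\bigr)=\frac{2\nu}{1+\nu}N_{\kappa}+L_{21},
\]
so the problem reduces to showing that $L_{21}:H^{s}(\Gamma)\to H^{s}(\Gamma)$ is continuous, since the first term is precisely $\widetilde{R}_{21}^{s,s-1}$.

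Next I would bound the two pieces of $L_{21}$ separately. For $(Y^1-2N_{\kappa})\widetilde{R}_{11}^{s,s-1}$, since $\widetilde{R}_{11}^{s,s-1}$ is a scalar multiple of the identity, the mapping property follows directly from Lemma~\ref{lemma33d}, which gives $Y^1-2N_{\kappa}:H^{s}(\Gamma)\to H^{s}(\Gamma)$. For $Y^1(R_{11}-\widetilde{R}_{11}^{s,s-1})$, Lemma~\ref{lemma73dm1} provides $R_{11}-\widetilde{R}_{11}^{s,s-1}:H^{s}(\Gamma)\to H^{s+1}(\Gamma)$, and composing with the continuous map $Y^1:H^{s+1}(\Gamma)\to H^{s}(\Gamma)$ yields the desired $H^{s}(\Gamma)\to H^{s}(\Gamma)$ continuity. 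Adding the two contributions finishes the estimate on $L_{21}$.

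There is no real obstacle here: the only novelty compared to Lemma~\ref{lemma8} is the loss of one derivative in the three-dimensional bounds for $K_j^\top$ and for the difference $Y^1-2N_\kappa$, which is precisely compensated by the weaker target regularity ($H^{s}$ instead of $H^{s+1}$) we are asked to prove in Case II. The potentially delicate point is checking that Lemma~\ref{lemma73dm1} genuinely provides the $H^s\to H^{s+1}$ smoothing of $R_{11}-\widetilde{R}_{11}^{s,s-1}$ uniformly in $s$, but this is exactly its statement, so the proof amounts to a two-line bookkeeping argument once the decomposition above is in hand.
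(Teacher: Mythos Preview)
Your proof is correct and follows exactly the approach the paper indicates, namely mimicking the decomposition of Lemma~\ref{lemma8} with the three-dimensional mapping properties (Lemma~\ref{lemma33d} in place of Lemma~\ref{lemma3}, and Lemma~\ref{lemma73dm1} in place of Lemma~\ref{lemma7}). Your bookkeeping is cleaner than the paper's in that you work directly with $2N_\kappa$ rather than passing through $2N_1$ and then invoking $N_1-N_\kappa:H^s\to H^{s+1}$, but this is the same argument.
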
 
\begin{lemma}\label{lemma93dm1}
The operator 
\[
\widetilde{R}_{22}^{s,s-1}=\frac{1}{1+\nu}I 
\]
has the property 
$\widetilde{R}_{22}^{s,s-1}-R_{22}:H^{s}(\Gamma)\to H^{s+1}(\Gamma)$ for all 
$s$. 
\end{lemma}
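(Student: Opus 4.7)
The plan is to mirror the proof of Lemma~\ref{lemma9} essentially verbatim, substituting the three-dimensional mapping property $K_\kappa^\top:H^s(\Gamma)\to H^{s+1}(\Gamma)$ from Theorem~\ref{regL} for the sharper two-dimensional one used there. Starting from the identity $R_{22}=Y^1R_{12}$ read off from the block structure of $\mathcal{R}_1$ in \eqref{matrixR}, I insert $\pm 2N_\kappa R_{12}$, substitute $\widetilde{R}_{12}^{s,s-1}=-\tfrac{2}{1+\nu}S_\kappa$ from Lemma~\ref{lemma53dm1} in the first term, and split $Y^1-2N_\kappa=(Y^1-2N_1)+2(N_1-N_\kappa)$. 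Using the second Calder\'on identity in \eqref{eq:calderon} to rewrite $-\tfrac{4}{1+\nu}N_\kappa S_\kappa=\tfrac{1}{1+\nu}I-\tfrac{4}{1+\nu}(K_\kappa^\top)^2$, one arrives at
\begin{equation*}
R_{22}=\frac{1}{1+\nu}I-\frac{4}{1+\nu}(K_\kappa^\top)^2+2N_\kappa\bigl(R_{12}-\widetilde{R}_{12}^{s,s-1}\bigr)+(Y^1-2N_1)R_{12}+2(N_1-N_\kappa)R_{12}=:\frac{1}{1+\nu}I+L_{22}.
\end{equation*}

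The remaining task is to verify that each of the four summands in $L_{22}$ maps $H^s(\Gamma)$ into $H^{s+1}(\Gamma)$. The term $(K_\kappa^\top)^2$ sends $H^s$ into $H^{s+2}$ by Theorem~\ref{regL} in $d=3$; the composition $N_\kappa(R_{12}-\widetilde{R}_{12}^{s,s-1})$ factors as $H^s\to H^{s+2}\to H^{s+1}$ by Lemma~\ref{lemma53dm1} and the mapping property of $N_\kappa$; the term $(N_1-N_\kappa)R_{12}$ maps $H^s\to H^{s+1}\to H^{s+2}$ by Theorem~\ref{thmrev2} and the continuity $R_{12}:H^s\to H^{s+1}$. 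For $(Y^1-2N_1)R_{12}$ I would invoke \eqref{eq:Y1:02} to rewrite $Y^1-2N_1=-2K_1^\top Y^1$; the three-dimensional mapping $K_1^\top:H^{s-1}\to H^s$ together with $Y^1:H^{s+1}\to H^s$ and $R_{12}:H^s\to H^{s+1}$ then gives the required $H^s\to H^{s+1}$.

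The main obstacle, as is already signalled by the gap between Theorems~\ref{thm1} and~\ref{thm10}, is precisely this last term $(Y^1-2N_1)R_{12}$: in three dimensions $K_1^\top$ gains only a single order of regularity, so no amount of extra Calder\'on bookkeeping will push $(Y^1-2N_1)R_{12}$ past $H^s\to H^{s+1}$. This is exactly the smoothing threshold that Theorem~\ref{thm10} was tailored to accommodate in Case II, which is why the stronger $H^s\to H^{s+2}$ conclusion of Lemma~\ref{lemma9} is unavailable for $d=3$ and the present statement is optimal.
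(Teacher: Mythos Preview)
Your proposal is correct and follows exactly the approach the paper intends: the paper's own proof of Lemma~\ref{lemma93dm1} consists solely of the remark that it ``can be established along the same lines'' as Lemma~\ref{lemma9}, and you have faithfully carried out that program with the three-dimensional mapping properties substituted. The closing paragraph on why $(Y^1-2N_1)R_{12}$ cannot be pushed past $H^s\to H^{s+1}$ is a nice bonus that explains the gap between Theorems~\ref{thm1} and~\ref{thm10}.
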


In conclusion, the matrix operator $\mathcal{\widetilde{R}}_1^{s,s-1}$ takes on 
the form
\begin{equation}\label{eq:tildR13dm1}
\begin{array}{rclrcl}
\widetilde{R}_{11}^{s,s-1}&=&\displaystyle\frac{\nu}{1+\nu}I\quad&
\widetilde{R}_{12}^{s,s-1}&=&\displaystyle -\frac{2}{1+\nu}S_{\kappa} \\[2ex]
\widetilde{R}_{21}^{s,s-1}&=&\displaystyle \frac{2\nu}{1+\nu}N_{\kappa}\quad&
\widetilde{R}_{22}^{s,s-1}&=&\displaystyle \frac{1}{1+\nu}I,
\end{array}
\end{equation}
where $\kappa$ is a wavenumber such that $\Re(\kappa)\geq 0$ and $\Im(\kappa)> 
0$ satisfies the assumptions in Theorem~\ref{thm10}. Thus, 
the matrix operator that enters the GCSIE formulation~\eqref{Levadoux_simpl} 
that uses the operator $\mathcal{\widetilde{R}}_1^{s,s-1}$ defined in 
equation~\eqref{eq:tildR13dm1} is a compact perturbation of the identity matrix 
in the space $H^{s}(\Gamma)\times H^{s-1}(\Gamma)$, provided that the operators 
$(Y^1-\nu Y^2)^{-1}$ are well defined. The latter assumption is not essential, 
as we establish next the analogue of the result in Theorem~\ref{thm2}: 
\begin{theorem}\label{thm23dm1}
Let $d=3$ and $\mathcal{\widetilde{D}}^{s,s-1}$ be the operator in the 
left-hand-side of 
equation~\eqref{Levadoux_simpl} with $\mathcal{\widetilde{R}}_1$ being the 
operator 
$\mathcal{\widetilde{R}}_1^{s,s-1}$ defined in equation~\eqref{eq:tildR13dm1}. 
Then the operator $\mathcal{\widetilde{D}}^{s,s-1}$ is a 
compact perturbation of the identity matrix in the space $H^{s}(\Gamma)\times 
H^{s-1}(\Gamma)$ for all $s$.
\end{theorem}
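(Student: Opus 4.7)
The plan is to follow verbatim the strategy used in the proof of Theorem~\ref{thm2}: substitute the explicit expressions for $\widetilde{R}_{ij}^{s,s-1}$ from~\eqref{eq:tildR13dm1} directly into the four entries of $\widetilde{\mathcal{D}}$ displayed in~\eqref{Levadoux_simpl}, and then use the Calder\'on identities~\eqref{eq:calderon}, specifically $S_\kappa N_\kappa=-\tfrac{1}{4}I+K_\kappa^2$ and $N_\kappa S_\kappa=-\tfrac{1}{4}I+(K_\kappa^\top)^2$, to extract and recombine the leading identity contributions hidden inside the compositions $S_j N_\kappa$ and $N_j S_\kappa$ that appear. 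Because $\widetilde{\mathcal{R}}_1^{s,s-1}$ in~\eqref{eq:tildR13dm1} is algebraically identical to $\widetilde{\mathcal{R}}_1^{s,s}$ in~\eqref{eq:tildR1}, this algebraic step produces precisely the four expressions listed in~\eqref{eq:matrix_explicit}; no new computation is required at this stage.

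What changes relative to Theorem~\ref{thm2} is the final regularity verification, since in $d=3$ the operators $K_k$ and $K_k^\top$ only gain one order of smoothness (Theorem~\ref{regL}), while the target compactness is now in $H^s(\Gamma)\times H^{s-1}(\Gamma)$. I would check entry by entry: for $\widetilde{D}_{11}^{s,s-1}-I$, the pieces $K_j$, $K_j^2$, and $S_j(N_{\kappa_1}-N_j)$ each gain at least one order when mapping $H^s(\Gamma)\to H^{s+\alpha}(\Gamma)$ with $\alpha\ge 1$, and are therefore compact on $H^s(\Gamma)$. For $\widetilde{D}_{12}^{s,s-1}$, acting $H^{s-1}(\Gamma)\to H^s(\Gamma)$, the term $S_2-S_1$ gains three orders by Theorem~\ref{thmrev2} and $(K_1+K_2)S_\kappa$ composes two gains of one; both land in $H^{s+1}(\Gamma)$ and are compact into $H^s(\Gamma)$. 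For $\widetilde{D}_{21}^{s,s-1}$, acting $H^s(\Gamma)\to H^{s-1}(\Gamma)$, the term $N_1-N_2$ maps into $H^{s+1}(\Gamma)$ while $(K_1^\top+K_2^\top)N_\kappa$ maps into $H^s(\Gamma)$; both embed compactly into $H^{s-1}(\Gamma)$. For $\widetilde{D}_{22}^{s,s-1}-I$, each of $K_j^\top$, $(K_\kappa^\top)^2$, and $(N_j-N_\kappa)S_\kappa$ gains at least one order starting from $H^{s-1}(\Gamma)$.

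The main obstacle I foresee is purely bookkeeping: one must track four different input/output Sobolev pairs simultaneously and ensure that the weaker three-dimensional smoothing of $K_k$ and $K_k^\top$ still suffices. The potentially delicate entry is $\widetilde{D}_{21}^{s,s-1}$, where $N_\kappa$ costs one order of regularity and the subsequent $K_j^\top$ only recovers it; compactness survives only because the target space $H^{s-1}(\Gamma)$ is one order lower than in Case~I, and here the asymmetry of the two-component setting actually helps rather than hurts. Once the per-entry verification is complete, the formal hypotheses on $(Y^1-\nu Y^2)^{-1}$ and $Y^2$ drop out of the final statement exactly as in Theorem~\ref{thm2}, because the matrix $\widetilde{\mathcal{D}}^{s,s-1}$ is assembled entirely from the standard boundary integral operators $S_j, N_j, K_j, K_j^\top, S_\kappa, N_\kappa, K_\kappa^\top$, which are well defined and enjoy the mapping properties of Theorems~\ref{regL} and~\ref{thmrev2} independently of any admittance operator invertibility used to motivate the construction.
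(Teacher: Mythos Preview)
Your proposal is correct and follows essentially the same approach as the paper: observe that the regularizer~\eqref{eq:tildR13dm1} coincides algebraically with~\eqref{eq:tildR1}, so the Calder\'on manipulations yield the same explicit entries~\eqref{eq:matrix_explicit}, and then verify compactness in $H^s(\Gamma)\times H^{s-1}(\Gamma)$ using the three-dimensional mapping properties from Theorems~\ref{regL} and~\ref{thmrev2}. The paper's own proof is in fact terser than yours---it simply points to~\eqref{eq:matrix_explicit} and the two regularity theorems---so your entry-by-entry bookkeeping is a more detailed execution of the same argument rather than a different route.
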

\begin{proof} We note that the operator $\mathcal{\widetilde{D}}^{s,s-1}$ is 
defined just as 
the operator $\mathcal{\widetilde{D}}^{s,s}$ in Theorem~\ref{thm2}, except that 
all the boundary integral operators that enter its definition are three 
dimensional analogues of the operators in the aforementioned theorem. The 
result now follows from equations~\eqref{eq:matrix_explicit} and from the 
mapping properties recounted in Theorem~\ref{regL} and Theorem~\ref{thmrev2}.
\end{proof}

Interestingly enough, if we view the operator $\mathcal{\widetilde{D}}^{s,s-1}$ as an operator in the space $H^s(\Gamma) \times H^s(\Gamma)$, we have the following result
\begin{theorem}\label{thm23dm1ss}
Let $d=3$ and $\mathcal{\widetilde{D}}^{s,s-1}$ be the operator in the 
left-hand-side of 
equation~\eqref{Levadoux_simpl} with $\mathcal{\widetilde{R}}_1$ being the 
operator 
$\mathcal{\widetilde{R}}_1^{s,s-1}$ defined in equation~\eqref{eq:tildR13dm1}. 
Then the operator $\mathcal{\widetilde{D}}^{s,s-1}$ is a 
compact perturbation of an operator that is invertible with a bounded inverse in the space $H^{s}(\Gamma)\times H^{s}(\Gamma)$ for all $s$. Thus, the operator $\mathcal{\widetilde{D}}^{s,s-1}$ is Fredholm in the space $H^{s}(\Gamma)\times H^{s}(\Gamma)$ for all $s$.
\end{theorem}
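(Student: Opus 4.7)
The plan is to exploit the explicit formulas for $\widetilde{D}_{ij}^{s,s-1}$ given by \eqref{eq:matrix_explicit}---which by the remark in the proof of Theorem~\ref{thm23dm1} are structurally the same as in Case I with all boundary integral operators now interpreted in three dimensions---and to reanalyze the mapping properties of each block as an operator $H^s(\Gamma)\to H^s(\Gamma)$. The point is that three of the four blocks are automatically compact perturbations of their diagonal identity part in this new target space, while the remaining $(2,1)$ block has a bounded but non-compact principal part that can be isolated via Calder\'on's identities and packaged into a manifestly invertible triangular operator.

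For $\widetilde{D}_{11}^{s,s-1}$ and $\widetilde{D}_{22}^{s,s-1}$ every non-identity summand is either a double layer operator $K_j$, $K_j^\top$ (which gains one order in 3D by Theorem~\ref{regL}), a product such as $(K_j)^2$ or $(K_\kappa^\top)^2$ (two orders), or a product $S_j(N_\kappa-N_j)$, $(N_j-N_\kappa)S_\kappa$ (two orders, by combining Theorem~\ref{regL} with Theorem~\ref{thmrev2}); all are compact $H^s\to H^s$. For $\widetilde{D}_{12}^{s,s-1}$ the term $S_2-S_1$ gains three orders by Theorem~\ref{thmrev2} and $(K_1+K_2)S_\kappa$ gains two orders, so the entry is itself compact $H^s\to H^s$.

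The genuine obstacle is the $(2,1)$ block
\[
\widetilde{D}_{21}^{s,s-1}=\tfrac{\nu}{1+\nu}(N_1-N_2)-\tfrac{2\nu}{1+\nu}(K_1^\top+K_2^\top)N_\kappa,
\]
since in 3D we only have $N_\kappa:H^s\to H^{s-1}$ and $K_j^\top:H^{s-1}\to H^s$, so the composition is bounded $H^s\to H^s$ but not compact. The key reduction I would perform is to replace $N_\kappa$ by $N_j$ up to a smoothing remainder: because $N_\kappa-N_j:H^s\to H^{s+1}$ by Theorem~\ref{thmrev2} and $K_j^\top$ gains an additional order acting on $H^{s+1}$, each difference $K_j^\top(N_\kappa-N_j)$ lands in $H^{s+2}(\Gamma)$ and is therefore compact on $H^s$. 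Combining this with the Calder\'on identity $K_j^\top N_j=N_j K_j$ from \eqref{eq:calderon}, and absorbing the (compact) term $N_1-N_2$, yields
\[
\widetilde{D}_{21}^{s,s-1}\equiv -\tfrac{2\nu}{1+\nu}\bigl(N_1K_1+N_2K_2\bigr)\quad\text{modulo compact operators on }H^s(\Gamma).
\]

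Putting the pieces together, $\mathcal{\widetilde{D}}^{s,s-1}=\mathcal{L}+\mathcal{K}$ on $H^s(\Gamma)\times H^s(\Gamma)$, where $\mathcal{K}$ is compact and
\[
\mathcal{L}=\begin{pmatrix} I & 0 \\ -\frac{2\nu}{1+\nu}\bigl(N_1K_1+N_2K_2\bigr) & I \end{pmatrix}
\]
is bounded, lower triangular, and invertible with explicit inverse obtained by flipping the sign of its off-diagonal entry. Hence $\mathcal{\widetilde{D}}^{s,s-1}$ is a compact perturbation of a boundedly invertible operator in $H^s(\Gamma)\times H^s(\Gamma)$ for every $s$, and is in particular Fredholm. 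The main technical bookkeeping will be verifying that the off-diagonal block really does reduce to $N_1K_1+N_2K_2$ modulo compact---the rest is mapping-property tracking.
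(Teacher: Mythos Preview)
Your argument is correct and follows the same overall strategy as the paper: isolate the single non-compact contribution in the $(2,1)$ block and absorb it into a lower-triangular operator with identity diagonal, which is trivially invertible on $H^s(\Gamma)\times H^s(\Gamma)$.

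The only difference is that you perform an additional reduction that the paper omits. The paper simply writes
\[
\mathcal{P}=\begin{pmatrix} I & 0 \\ -\frac{2\nu}{1+\nu}(K_1^\top+K_2^\top)N_{\kappa} & I \end{pmatrix},
\]
observing directly that $(K_1^\top+K_2^\top)N_\kappa:H^s\to H^s$ is bounded (by Theorem~\ref{regL}) and that the remaining piece $\frac{\nu}{1+\nu}(N_1-N_2)$ of $\widetilde{D}_{21}^{s,s-1}$ is compact by Theorem~\ref{thmrev2}. You instead push further, replacing $N_\kappa$ by $N_j$ modulo smoothing and invoking the Calder\'on identity $K_j^\top N_j=N_jK_j$ to arrive at the principal part $N_1K_1+N_2K_2$. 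This extra step is valid but unnecessary: since the goal is only boundedness of the off-diagonal entry, there is no need to rewrite it in a ``symmetrized'' form. The paper's version is more economical; yours has the mild advantage that the principal part involves only the physical wavenumbers $k_1,k_2$ rather than the auxiliary $\kappa$, but this plays no role in the Fredholm conclusion.
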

\begin{proof} It follows from equations~\eqref{eq:matrix_explicit} that
\[
\mathcal{\widetilde{D}}^{s,s-1}=\left(\begin{array}{cc}I & 0\\ -\frac{2\nu}{1+\nu}(K_1^\top+K_2^\top)N_{\kappa_1}& I \end{array}\right)+\mathcal{C}=\mathcal{P}+\mathcal{C}
\]
where the components of the matrix operator $\mathcal{C}=\left(\begin{array}{cc}\mathcal{C}_{11}&\mathcal{C}_{12}\\\mathcal{C}_{21}&\mathcal{C}_{22}\end{array}\right)$ are given by
\begin{eqnarray}
\mathcal{C}_{11}&=&-\frac{1}{1+\nu}K_2+\frac{\nu}{1+\nu}K_1-\frac{2\nu}
{1+\nu}S_1(N_{\kappa_1}-N_1)-\frac{2\nu}{1+\nu}(K_1)^2\nonumber\\
&&-\frac{2}{1+\nu}S_2(N_{\kappa_1}-N_2)-\frac{2}{1+\nu}(K_2)^2\nonumber\\
\mathcal{C}_{12}&=&\frac{1}{1+\nu}(S_2-S_1)-\frac{2}{1+\nu}(K_1+K_2)S_{
\kappa_1}\nonumber\\
\mathcal{C}_{21}&=&\frac{\nu}{1+\nu}(N_1-N_2)\nonumber\\
\mathcal{C}_{22}&=&\frac{\nu}{1+\nu}K_2^\top-\frac{1}{1+\nu}
K_1^\top-\frac{2}{1+\nu}(N_1-N_{\kappa})S_{\kappa}\nonumber\\
&&-\frac{2\nu}{1+\nu}(N_2-N_{\kappa})S_{\kappa}-2(K_{\kappa}^\top)^2.\nonumber
\end{eqnarray}
Taking into account the mapping properties recounted in Theorem~\ref{regL} and Theorem~\ref{thmrev2}, it follows that $\mathcal{C}:H^{s}(\Gamma)\times H^{s}(\Gamma)\to H^{s+1}(\Gamma)\times H^{s+1}(\Gamma)$, and thus the operator $\mathcal{C}$ is compact when viewed as an operator from the space $H^{s}(\Gamma)\times H^{s}(\Gamma)$ to itself. The matrix operator $\mathcal{P}$ is a continuous mapping in the space $H^{s}(\Gamma)\times H^{s}(\Gamma)$. Furthermore, we  have that
\[
\mathcal{P}^{-1}=\left(\begin{array}{cc}I & 0\\ \frac{2\nu}{1+\nu}(K_1^\top+K_2^\top)N_{\kappa_1}& I \end{array}\right),
\]   
which can be seen to be a continuous mapping in the same space $H^{s}(\Gamma)\times H^{s}(\Gamma)$.
\end{proof}

\section{Construction of operators $\widetilde{R}_{ij}^{s,s}$ in the 
case $d=3$\label{case3}}  
We begin again with the construction of the operator $\widetilde{R}_{12}^{s,s}$ 
under the assumption that the operator $(Y^1-\nu Y^2)^{-1}$ is well defined. We 
begin with the following result:

\begin{lemma}\label{lemma53d}
The operator 
\[
\widetilde {R}_{12}^{s,s}=-\frac{2}{1+\nu} 
S_{\kappa_1}-\frac{4(1-\nu)}{(1+\nu)^2}S_{\kappa_2}K_{\kappa_2}^\top,\quad 
\text{with 
} \Re(\kappa_j), \Im(\kappa_j)\ge 0,\ j=1,2,
\]
has the property 
$\widetilde{R}_{12}^{s,s}-R_{12}:H^{s}(\Gamma)\to H^{s+3}(\Gamma)$ for all $s$. 
\end{lemma}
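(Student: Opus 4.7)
My plan is to establish the estimate via the identity
\[
\widetilde{R}_{12}^{s,s} - R_{12} = R_{12}\bigl[(Y^1 - \nu Y^2)\widetilde{R}_{12}^{s,s} - I\bigr].
\]
Since $R_{12}=(Y^1-\nu Y^2)^{-1}:H^{t}(\Gamma)\to H^{t+1}(\Gamma)$ for every $t$, it suffices to prove that $(Y^1-\nu Y^2)\widetilde{R}_{12}^{s,s}-I:H^{s}(\Gamma)\to H^{s+2}(\Gamma)$. To this end, I first replace $Y^1-\nu Y^2$ by the approximation from Lemma~\ref{lemma33dm1}: the combination
\[
\widetilde{Y}^{1}_{-1}-\nu\widetilde{Y}^{2}_{-1}=2(1+\nu)N_{\kappa_1}-4(1-\nu)N_{\kappa_2}K_{\kappa_2}
\]
differs from $Y^1-\nu Y^2$ by an operator $H^{s}(\Gamma)\to H^{s+1}(\Gamma)$. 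Because $\widetilde{R}_{12}^{s,s}:H^{s}(\Gamma)\to H^{s+1}(\Gamma)$, the discrepancy $\bigl[(Y^1-\nu Y^2)-(\widetilde{Y}^{1}_{-1}-\nu\widetilde{Y}^{2}_{-1})\bigr]\widetilde{R}_{12}^{s,s}$ already has the desired regularity $H^{s}(\Gamma)\to H^{s+2}(\Gamma)$. It therefore remains to verify that
\[
\bigl[2(1+\nu)N_{\kappa_1}-4(1-\nu)N_{\kappa_2}K_{\kappa_2}\bigr]\widetilde{R}_{12}^{s,s}=I+E,\qquad E:H^{s}(\Gamma)\to H^{s+2}(\Gamma).
\]

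The heart of the proof is the expansion of this product into four pieces and their simplification by the Calder\'on identities~\eqref{eq:calderon} together with the wavenumber-shift estimates of Theorem~\ref{thmrev2}. The diagonal piece $-4N_{\kappa_1}S_{\kappa_1}=I-4(K_{\kappa_1}^{\top})^{2}$ produces the identity modulo an order-two contribution. For the two off-diagonal pieces $-\tfrac{8(1-\nu)}{1+\nu}N_{\kappa_1}S_{\kappa_2}K_{\kappa_2}^{\top}$ and $\tfrac{8(1-\nu)}{1+\nu}N_{\kappa_2}K_{\kappa_2}S_{\kappa_1}$, I would write $N_{\kappa_1}S_{\kappa_2}=N_{\kappa_1}S_{\kappa_1}+N_{\kappa_1}(S_{\kappa_2}-S_{\kappa_1})$ and observe that $N_{\kappa_1}(S_{\kappa_2}-S_{\kappa_1}):H^{s}(\Gamma)\to H^{s+2}(\Gamma)$ by Theorem~\ref{thmrev2}, so that Calder\'on's $N_{\kappa_1}S_{\kappa_1}=-\tfrac{1}{4}I+(K_{\kappa_1}^{\top})^{2}$ reduces the first cross term to $\tfrac{2(1-\nu)}{1+\nu}K_{\kappa_2}^{\top}$ modulo order-two operators. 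Symmetrically, writing $K_{\kappa_2}S_{\kappa_1}=K_{\kappa_2}S_{\kappa_2}+K_{\kappa_2}(S_{\kappa_1}-S_{\kappa_2})$ with $K_{\kappa_2}(S_{\kappa_1}-S_{\kappa_2}):H^{s}(\Gamma)\to H^{s+4}(\Gamma)$, then applying $K_{\kappa_2}S_{\kappa_2}=S_{\kappa_2}K_{\kappa_2}^{\top}$ and $N_{\kappa_2}S_{\kappa_2}=-\tfrac{1}{4}I+(K_{\kappa_2}^{\top})^{2}$, the second cross term reduces to $-\tfrac{2(1-\nu)}{1+\nu}K_{\kappa_2}^{\top}$ modulo order-two operators, and the two $K_{\kappa_2}^\top$ contributions cancel exactly. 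Finally, $\tfrac{16(1-\nu)^{2}}{(1+\nu)^{2}}N_{\kappa_2}K_{\kappa_2}S_{\kappa_2}K_{\kappa_2}^{\top}$ simplifies through the same Calder\'on identities to a combination of $(K_{\kappa_2}^{\top})^{2}$ and $(K_{\kappa_2}^{\top})^{4}$, which is automatically of order at least two.

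The main obstacle is not any single step but the bookkeeping of regularity orders through the repeated Calder\'on reductions and wavenumber-change estimates: one must consistently insert the differences $S_{\kappa_1}-S_{\kappa_2}:H^{s}(\Gamma)\to H^{s+3}(\Gamma)$ and $K_{\kappa_1}-K_{\kappa_2}:H^{s}(\Gamma)\to H^{s+2}(\Gamma)$ from Theorem~\ref{thmrev2} at precisely the places where a shift from one wavenumber to another is needed, in order to secure the required two derivatives of smoothing. The specific coefficient $-\tfrac{4(1-\nu)}{(1+\nu)^{2}}$ of the correction $S_{\kappa_2}K_{\kappa_2}^{\top}$ is exactly what engineers the cancellation of the $K_{\kappa_2}^{\top}$ contributions from the two cross terms; any other choice would leave a residual operator of order one and thus only recover the weaker $H^{s}(\Gamma)\to H^{s+2}(\Gamma)$ smoothing of Lemma~\ref{lemma53dm1}.
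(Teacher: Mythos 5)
Your argument is correct, and its computational core coincides with the paper's: both expand the product of $\widetilde{R}_{12}^{s,s}$ with the approximate admittance difference $\widetilde{A}=\widetilde{Y}^1_{-1}-\nu\widetilde{Y}^2_{-1}=2(1+\nu)N_{\kappa_1}-4(1-\nu)N_{\kappa_2}K_{\kappa_2}$ from Lemma~\ref{lemma33dm1} and reduce it to $I$ plus a two-order smoothing remainder via the Calder\'on identities~\eqref{eq:calderon} and Theorem~\ref{thmrev2}. The assembly, however, is genuinely different, and in one respect cleaner. The paper forms $\widetilde{R}_{12}^{s,s}\widetilde{A}=I+M_\nu$ (right composition, so the identity comes from $-4S_{\kappa_1}N_{\kappa_1}=I-4K_{\kappa_1}^2$), invokes Lemma~\ref{lemma0} to compare $R_{12}$ with $\widetilde{A}^{-1}$, and then writes $\widetilde{R}_{12}^{s,s}-R_{12}=(\widetilde{R}_{12}^{s,s}-\widetilde{A}^{-1})+(\widetilde{A}^{-1}-R_{12})$; this forces it to assume $\widetilde{A}$ invertible and to add a final paragraph perturbing the wavenumbers to purely imaginary values when it is not. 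You instead left-compose, $\widetilde{A}\widetilde{R}_{12}^{s,s}=I+E$, and conjugate with $R_{12}$ through the identity $\widetilde{R}_{12}^{s,s}-R_{12}=R_{12}\bigl[(Y^1-\nu Y^2)\widetilde{R}_{12}^{s,s}-I\bigr]$, which never requires $\widetilde{A}^{-1}$ to exist and so dispenses with that caveat entirely. The price is that you lean directly on the mapping property $R_{12}:H^{t}(\Gamma)\to H^{t+1}(\Gamma)$, but this is exactly what the paper's application of Lemma~\ref{lemma0} uses as well, and it is part of the section's standing assumption that $(Y^1-\nu Y^2)^{-1}$ is well defined. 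Your treatment of the cross terms (reducing each separately to $\pm\tfrac{2(1-\nu)}{1+\nu}K_{\kappa_2}^{\top}$ and cancelling) differs in bookkeeping from the paper's (which commutes $N_{\kappa_2}K_{\kappa_2}=K_{\kappa_2}^{\top}N_{\kappa_2}$ first and groups the two terms into wavenumber differences), but both are valid and yield the required $H^{s}(\Gamma)\to H^{s+3}(\Gamma)$ estimate.
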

\begin{proof}
We use the following results established in Lemma~\ref{lemma33dm1}, namely the 
operators $\widetilde{Y}^1_{-1}=2N_{\kappa_1}-4N_{\kappa_2}K_{\kappa_2}$ and 
$\widetilde{Y}^2_{-1}=-2N_{\kappa_1}-4N_{\kappa_2}K_{\kappa_2}$ are such that 
$\widetilde{Y}^j_{-1}-Y^j:H^s(\Gamma)\to H^{s+1}(\Gamma)$. 

Let $A=R_{12}= Y^1-\nu Y^2$, 
$\widetilde{A}= 2(1+\nu)N_{\kappa_1}-4(1-\nu)N_{\kappa_2}K_{\kappa_2}$ and
assume first that $\widetilde{A}:H^s(\Gamma)\to H^{s-1}(\Gamma)$ is invertible. 
Applying Lemma \ref{lemma0} we conclude that 
$R_{12}-\widetilde{A}^{-1}:H^s(\Gamma)\to H^{s+3}(\Gamma)$. We also have
\begin{eqnarray}
\widetilde{R}_{12}^{s,s}\widetilde{A}&=&-4S_{\kappa_1}N_{\kappa_1}+\frac{
8(1-\nu)}{1+\nu}S_{\kappa_1}N_{\kappa_2}K_{\kappa_2}-\frac{8(1-\nu)}{1+\nu}S_{
\kappa_2}K_{\kappa_2}^\top N_{\kappa_1}\nonumber\\
&&+\frac{16(1-\nu)^2}{(1+\nu)^2}S_{\kappa_2}K_{\kappa_2}^\top 
N_{\kappa_2}K_{\kappa_2}\nonumber\\
&=&I-(K_{\kappa_1})^2+\frac{8(1-\nu)}{1+\nu}\left((S_{\kappa_1}-S_{\kappa_2})K_{
\kappa_2}^\top N_{\kappa_2}+S_{\kappa_2}K_{\kappa_2}^\top 
(N_{\kappa_2}-N_{\kappa_1})\right)\nonumber\\
&&+\frac{16(1-\nu)^2}{(1+\nu)^2}S_{\kappa_2}K_{\kappa_2}^\top 
N_{\kappa_2}K_{\kappa_2}:=I+M_\nu.
\end{eqnarray}
Making use of Theorem~\ref{regL} and Theorem~\ref{thmrev2} we obtain that 
$M_\nu:H^{s+1}(\Gamma)\to H^{s+3}(\Gamma)$ 
continuously. Thus, 
$\widetilde{R}_{12}^{s,s}\widetilde{A}-I=(\widetilde{R}_{12}^{s,s}-\widetilde{A}
^{-1})\widetilde{A}:H^{s+1}(\Gamma)\to H^{s+3}(\Gamma)$ and hence 
$\widetilde{R}_{12}^{s,s}-\widetilde{A}^{-1}:H^s(\Gamma)\to H^{s+3}(\Gamma)$. 

Using $\widetilde{R}_{12}^{s,s}-{R}_{12}^{s,s}=(\widetilde{R}_{12}^{s,s}-\widetilde{A}^{-1})
+(\widetilde{A}^{-1}-R_{12}^{s,s})$ the result of the lemma now follows .

If $\widetilde{A}$ fails to be  invertible, we can apply the same argument with different
wavenumbers ${\widetilde{k}_1,\widetilde{k_2}}$ for which the corresponding operator
is invertible (it is easy to show, using Lemma \ref{lemma:A01}, that if suffices to take 
$\widetilde{k}_1,\widetilde{k_2}$ with zero real part) and next apply Theorem \ref{thmrev2} to replace the layer operators with 
the original ones $k_1$, $k_2$ having in mind that the difference is a smoothing operator
of enough order.
\end{proof}

\begin{lemma}\label{lemma63d}
The operator 
\[
\widetilde 
{R}_{11}^{s,s}=\frac{\nu}{1+\nu}\left(I+\frac{4}{1+\nu}K_{\kappa_2}\right) 
,\quad \text{with 
} \Re(\kappa_2), \Im(\kappa_2)\ge 0,
\]
has the property 
$\widetilde{R}_{11}^{s,s}-R_{11}:H^{s}(\Gamma)\to H^{s+2}(\Gamma)$ for all $s$. 
\end{lemma}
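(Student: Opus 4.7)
The plan is to mimic the proof of Lemma~\ref{lemma7}, but now using the three-dimensional ingredients $\widetilde{Y}^2_{-1}=-2N_{\kappa_1}-4N_{\kappa_2}K_{\kappa_2}$ from Lemma~\ref{lemma33dm1} and $\widetilde{R}_{12}^{s,s}$ from Lemma~\ref{lemma53d}. Starting from $R_{11}=-\nu R_{12}Y^2$, I would split
\[
\widetilde{R}_{11}^{s,s}-R_{11}=\bigl(\widetilde{R}_{11}^{s,s}+\nu\widetilde{R}_{12}^{s,s}\widetilde{Y}^2_{-1}\bigr)-\nu(\widetilde{R}_{12}^{s,s}-R_{12})\widetilde{Y}^2_{-1}-\nu R_{12}(\widetilde{Y}^2_{-1}-Y^2),
\]
and treat the three pieces separately.

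The two approximation-error terms on the right are immediate from the previously cited lemmas. Lemma~\ref{lemma53d} yields $\widetilde{R}_{12}^{s,s}-R_{12}:H^s\to H^{s+3}$, and since $\widetilde{Y}^2_{-1}:H^s\to H^{s-1}$ by Theorem~\ref{regL}, the first remainder product maps $H^s\to H^{s+2}$. Likewise Lemma~\ref{lemma33dm1} gives $\widetilde{Y}^2_{-1}-Y^2:H^s\to H^{s+1}$, which composed with $R_{12}:H^s\to H^{s+1}$ also produces an $H^s\to H^{s+2}$ map.

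The core of the proof is therefore showing $\widetilde{R}_{11}^{s,s}+\nu\widetilde{R}_{12}^{s,s}\widetilde{Y}^2_{-1}:H^s\to H^{s+2}$, which is a pure Calder\'on-calculus computation. Expanding the product $\widetilde{R}_{12}^{s,s}\widetilde{Y}^2_{-1}$ yields four compositions proportional to $S_{\kappa_1}N_{\kappa_1}$, $S_{\kappa_1}N_{\kappa_2}K_{\kappa_2}$, $S_{\kappa_2}K_{\kappa_2}^\top N_{\kappa_1}$ and $S_{\kappa_2}K_{\kappa_2}^\top N_{\kappa_2}K_{\kappa_2}$. I would first trade the mixed-wavenumber products $S_{\kappa_1}N_{\kappa_2}$ and $K_{\kappa_2}^\top N_{\kappa_1}$ for their pure-wavenumber counterparts $S_{\kappa_2}N_{\kappa_2}$ and $N_{\kappa_2}K_{\kappa_2}$, using the Calder\'on identity $K_\kappa^\top N_\kappa=N_\kappa K_\kappa$; the corresponding remainders $(S_{\kappa_1}-S_{\kappa_2})N_{\kappa_2}K_{\kappa_2}$ and $S_{\kappa_2}K_{\kappa_2}^\top(N_{\kappa_1}-N_{\kappa_2})$ are $H^s\to H^{s+3}$ by Theorem~\ref{thmrev2}. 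Applying $S_{\kappa_j}N_{\kappa_j}=-\tfrac14 I+K_{\kappa_j}^2$ throughout, every power $K_{\kappa_j}^2$, $K_{\kappa_2}^3$, $K_{\kappa_2}^4$ that appears is already an $H^s\to H^{s+2}$ map by Theorem~\ref{regL} in dimension three, so each is absorbed into the remainder.

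The hardest part is the algebraic bookkeeping of the leading contributions. Only the first of the four products supplies a surviving constant, namely $-\tfrac{1}{1+\nu}I$, and the $K_{\kappa_2}$ coefficients from the second and third products combine via the identity $-\tfrac{2}{1+\nu}-\tfrac{2(1-\nu)}{(1+\nu)^2}=-\tfrac{4}{(1+\nu)^2}$. Multiplying the resulting $-\tfrac{1}{1+\nu}I-\tfrac{4}{(1+\nu)^2}K_{\kappa_2}$ by $-\nu$ reproduces precisely $\widetilde{R}_{11}^{s,s}=\tfrac{\nu}{1+\nu}\bigl(I+\tfrac{4}{1+\nu}K_{\kappa_2}\bigr)$, completing the argument.
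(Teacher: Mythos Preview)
Your proof is correct and follows essentially the same route as the paper: both decompose $R_{11}=-\nu R_{12}Y^2$ into the explicit principal part $-\nu\widetilde{R}_{12}^{s,s}\widetilde{Y}^2_{-1}$ plus two approximation-error remainders, and then carry out the same Calder\'on-calculus reduction (trading mixed-wavenumber factors and applying $S_\kappa N_\kappa=-\tfrac14 I+K_\kappa^2$) to extract $\widetilde{R}_{11}^{s,s}$. The only cosmetic difference is which factor in each remainder is replaced by its tilde-approximation---the paper writes $\nu(\widetilde{R}_{12}^{s,s}-R_{12})Y^2-\nu\widetilde{R}_{12}^{s,s}(Y^2-\widetilde{Y}^2_{-1})$ whereas you write $-\nu(\widetilde{R}_{12}^{s,s}-R_{12})\widetilde{Y}^2_{-1}-\nu R_{12}(\widetilde{Y}^2_{-1}-Y^2)$---but the smoothing estimates go through identically.
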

\begin{proof} We have that
\begin{eqnarray*}
R_{11}&=&\nu(\widetilde{R}_{12}^{s,s}-R_{12})Y^2-\nu \widetilde{R}_{12}^{s,s} 
(Y^2-\widetilde{Y}^2_{-1})\nonumber\\
&&-4\nu\left(\frac{1}{1+\nu}S_{\kappa_1}+\frac{2(1-\nu)}{(1+\nu)^2}S_{\kappa_2}
K_{\kappa_2}^\top\right)(N_{\kappa_1}+2N_{\kappa_2}K_{\kappa_2})\nonumber\\
&=&\widetilde {R}_{11}^{s,s}+M_{11}
\end{eqnarray*}
with
\begin{eqnarray*}
M_{11}&:=&\nu(\widetilde{R}_{12}^{s,s}-R_{12})Y^2-\nu \widetilde{R}_{12}^{s,s} 
(Y^2-\widetilde{Y}^2_{-1})\nonumber\\
&&-\frac{8\nu(1-\nu)}{(1+\nu)^2}S_{\kappa_2} K_{\kappa_2}^\top (N_{\kappa_1}-N_{\kappa_2})
-\frac{16\nu}{(1+\nu)^2}(K_{
\kappa_2})^3
-\frac{4\nu}{1+\nu}(K_{\kappa_1})^2\nonumber\\
&&-\frac{16\nu(1-\nu)}{(1+\nu)^2}S_{\kappa_2} 
N_{\kappa_2}K_{\kappa_2}^2.
\end{eqnarray*}
Making use of Theorem~\ref{regL} and Theorem~\ref{thmrev2} we obtain that 
$M_{11}:H^{s}(\Gamma)\to H^{s+2}(\Gamma)$ continuously, 
and the result of the lemma follows.
\end{proof}

\begin{lemma}\label{lemma83d}
The operator 
$$\widetilde{R}_{21}^{s,s}=\nu\left(\frac{2}{1+\nu}N_{\kappa_1}+\frac{4(1-\nu)}{
(1+\nu)^2}N_{\kappa_2}K_{\kappa_2}\right)$$ has 
the property $\widetilde{R}_{21}^{s,s}-R_{21}:H^{s}(\Gamma)\to H^{s+1}(\Gamma)$ 
for all 
$s$. 
\end{lemma}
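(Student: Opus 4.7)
My plan is to mirror the strategy used in Lemma~\ref{lemma8} (the two-dimensional analogue) but employ the sharper, second-order approximation $\widetilde{Y}^1_{-1}=2N_{\kappa_1}-4N_{\kappa_2}K_{\kappa_2}$ furnished by Lemma~\ref{lemma33dm1}, together with the refined $\widetilde{R}_{11}^{s,s}$ from Lemma~\ref{lemma63d}. The starting point is the identity $R_{21}=Y^1 R_{11}$ from the definition of $\mathcal{R}_1$ in \eqref{matrixR}, and the natural splitting
\[
R_{21}=\widetilde{Y}^1_{-1}\,\widetilde{R}_{11}^{s,s}+\bigl(Y^1-\widetilde{Y}^1_{-1}\bigr)\widetilde{R}_{11}^{s,s}+Y^1\bigl(R_{11}-\widetilde{R}_{11}^{s,s}\bigr).
\]

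The two correction terms in this splitting are handled by directly invoking previously established smoothing properties together with the $d=3$ mapping estimates of Theorem~\ref{regL}. By Lemma~\ref{lemma33dm1}, $Y^1-\widetilde{Y}^1_{-1}:H^{s}(\Gamma)\to H^{s+1}(\Gamma)$, and $\widetilde{R}_{11}^{s,s}:H^{s}(\Gamma)\to H^{s}(\Gamma)$ is bounded since it is a constant multiple of $I+\tfrac{4}{1+\nu}K_{\kappa_2}$, so their composition maps $H^{s}(\Gamma)\to H^{s+1}(\Gamma)$. Similarly, by Lemma~\ref{lemma63d}, $R_{11}-\widetilde{R}_{11}^{s,s}:H^{s}(\Gamma)\to H^{s+2}(\Gamma)$, and composing with $Y^1:H^{s+2}(\Gamma)\to H^{s+1}(\Gamma)$ again yields a $H^{s}(\Gamma)\to H^{s+1}(\Gamma)$ operator.

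What remains is to show that the leading term $\widetilde{Y}^1_{-1}\,\widetilde{R}_{11}^{s,s}$ coincides with $\widetilde{R}_{21}^{s,s}$ modulo one order of smoothing. Expanding the product,
\[
\widetilde{Y}^1_{-1}\,\widetilde{R}_{11}^{s,s}=\frac{2\nu}{1+\nu}N_{\kappa_1}+\frac{8\nu}{(1+\nu)^2}N_{\kappa_1}K_{\kappa_2}-\frac{4\nu}{1+\nu}N_{\kappa_2}K_{\kappa_2}-\frac{16\nu}{(1+\nu)^2}N_{\kappa_2}K_{\kappa_2}^2,
\]
and combining the two $N_{\kappa_2}K_{\kappa_2}$ contributions via the elementary identity $\tfrac{4\nu}{1+\nu}+\tfrac{4\nu(1-\nu)}{(1+\nu)^2}=\tfrac{8\nu}{(1+\nu)^2}$, one finds
\[
\widetilde{Y}^1_{-1}\,\widetilde{R}_{11}^{s,s}-\widetilde{R}_{21}^{s,s}=\frac{8\nu}{(1+\nu)^2}\bigl(N_{\kappa_1}-N_{\kappa_2}\bigr)K_{\kappa_2}-\frac{16\nu}{(1+\nu)^2}N_{\kappa_2}K_{\kappa_2}^2.
\]
Theorem~\ref{thmrev2} gives $N_{\kappa_1}-N_{\kappa_2}:H^{s+1}(\Gamma)\to H^{s+2}(\Gamma)$, so the first term maps $H^{s}(\Gamma)\to H^{s+2}(\Gamma)$; Theorem~\ref{regL} in $d=3$ gives $K_{\kappa_2}^{2}:H^{s}(\Gamma)\to H^{s+2}(\Gamma)$, which composed with $N_{\kappa_2}:H^{s+2}(\Gamma)\to H^{s+1}(\Gamma)$ yields a $H^{s}(\Gamma)\to H^{s+1}(\Gamma)$ mapping.

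The main obstacle, which is really just careful bookkeeping, is ensuring that the explicit coefficient \emph{in front of} $N_{\kappa_2}K_{\kappa_2}$ is exactly right: the $-\tfrac{4\nu(1-\nu)}{(1+\nu)^2}$ built into $\widetilde{R}_{21}^{s,s}$ must be chosen to cancel (modulo a $(N_{\kappa_1}-N_{\kappa_2})K_{\kappa_2}$ smoothing piece) the non-smoothing residue produced when $\widetilde{Y}^1_{-1}$ acts on the $K_{\kappa_2}$ component of $\widetilde{R}_{11}^{s,s}$. The remaining higher-order contribution $N_{\kappa_2}K_{\kappa_2}^2$ is absorbed by the Calder\'on-type gain $K_{\kappa_2}^2:H^s\to H^{s+2}$ in three dimensions (as in the proof of Lemma~\ref{lemma53d}). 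Assembling the three pieces completes the verification that $\widetilde{R}_{21}^{s,s}-R_{21}:H^{s}(\Gamma)\to H^{s+1}(\Gamma)$ for every $s$.
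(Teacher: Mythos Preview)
Your proof is correct and follows essentially the same route as the paper's own argument: the same splitting $R_{21}=\widetilde{Y}^1_{-1}\widetilde{R}_{11}^{s,s}+(Y^1-\widetilde{Y}^1_{-1})\widetilde{R}_{11}^{s,s}+Y^1(R_{11}-\widetilde{R}_{11}^{s,s})$, the same identification of the remainder $\frac{8\nu}{(1+\nu)^2}(N_{\kappa_1}-N_{\kappa_2})K_{\kappa_2}-\frac{16\nu}{(1+\nu)^2}N_{\kappa_2}K_{\kappa_2}^2$, and the same appeal to Theorems~\ref{regL} and~\ref{thmrev2} together with Lemmas~\ref{lemma33dm1} and~\ref{lemma63d}. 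Your write-up is in fact more explicit about the coefficient bookkeeping than the paper's version.
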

\begin{proof}
Observe that
\begin{eqnarray*}
R_{21}&=&Y^1R_{11}=Y^1(R_{11}-\widetilde{R}_{11}^{s,s})+(Y^1-\widetilde{Y}^1_{-1})\widetilde
{R}_{11}^{s,s}+\widetilde{Y}^1_{-1}\widetilde{R}_{11}^{s,s}\nonumber\\
&=&\widetilde{R}_{21}^{s,s}+M_{21}\nonumber\\
M_{21}&:=&Y^1(R_{11}-\widetilde{R}_{11}^{s,s})+(Y^1-\widetilde{Y}^1_{-1})\widetilde{R}_{11}^
{s,s}+\frac{8\nu}{(1+\nu)^2}(N_{\kappa_1}-N_{\kappa_2})K_{\kappa_2}
-\frac{16\nu}{(1+\nu)^2}N_{\kappa_2}(K_{\kappa_2})^2.
\end{eqnarray*}
Since
$M_{21}:H^{s}(\Gamma)\to H^{s+1}(\Gamma)$ 
continuously, see Theorem~\ref{regL} and Theorem~\ref{thmrev2}, the Lemma is proven.
\end{proof}

Finally, we establish the following result
\begin{lemma}\label{lemma93d}
The operator 
\[
\widetilde{R}_{22}^{s,s}=\frac{1}{1+\nu}I 
-\frac{4\nu}{(1+\nu)^2}K_{\kappa_2}^\top
\]
has the property 
$\widetilde{R}_{22}^{s,s}-R_{22}:H^{s}(\Gamma)\to H^{s+2}(\Gamma)$ for all 
$s$. 
\end{lemma}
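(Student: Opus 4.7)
The plan is to mimic the structure of Lemma~\ref{lemma93d}'s two-dimensional counterpart (Lemma~\ref{lemma9}) and, more directly, the proofs of Lemmas~\ref{lemma63d} and~\ref{lemma83d} in this section. Since $R_{22}=Y^1 R_{12}$, and we have the parametrices $\widetilde{Y}^1_{-1}=2N_{\kappa_1}-4N_{\kappa_2}K_{\kappa_2}$ from Lemma~\ref{lemma33dm1} and $\widetilde{R}_{12}^{s,s}$ from Lemma~\ref{lemma53d} at our disposal, I would split
\[
R_{22}=\widetilde{Y}^1_{-1}\widetilde{R}_{12}^{s,s}+\widetilde{Y}^1_{-1}\bigl(R_{12}-\widetilde{R}_{12}^{s,s}\bigr)+\bigl(Y^1-\widetilde{Y}^1_{-1}\bigr)R_{12}
\]
and treat the three pieces separately.

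The two ``remainder'' pieces are straightforward. For the middle term, Lemma~\ref{lemma53d} gives $R_{12}-\widetilde{R}_{12}^{s,s}:H^s(\Gamma)\to H^{s+3}(\Gamma)$ and $\widetilde{Y}^1_{-1}:H^{s+3}(\Gamma)\to H^{s+2}(\Gamma)$ by Theorem~\ref{regL}, so the composition maps into $H^{s+2}(\Gamma)$. For the third term, Lemma~\ref{lemma33dm1} gives $Y^1-\widetilde{Y}^1_{-1}:H^{s+1}(\Gamma)\to H^{s+2}(\Gamma)$ and $R_{12}:H^s(\Gamma)\to H^{s+1}(\Gamma)$, again yielding an $H^s\to H^{s+2}$ mapping.

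The substantive step is to expand the leading product $\widetilde{Y}^1_{-1}\widetilde{R}_{12}^{s,s}$ and extract its principal part. Multiplying out produces four terms, and I would reduce each one using the Calder\'on identities~\eqref{eq:calderon}, in particular $N_\kappa S_\kappa=-\tfrac14 I+(K_\kappa^\top)^2$ and $K_\kappa S_\kappa = S_\kappa K_\kappa^\top$, while replacing every ``cross-wavenumber'' product (e.g.\ $N_{\kappa_1}S_{\kappa_2}$, $K_{\kappa_2}S_{\kappa_1}$) by its same-wavenumber counterpart plus an $H^s\to H^{s+2}$ smoothing correction coming from Theorem~\ref{thmrev2}. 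Any factor of $(K_{\kappa_j}^\top)^2$ that survives is already $H^s\to H^{s+2}$-smoothing in $d=3$ by Theorem~\ref{regL} and absorbed into the remainder. After collecting the surviving constant and first-order pieces one finds
\[
\widetilde{Y}^1_{-1}\widetilde{R}_{12}^{s,s}=\frac{1}{1+\nu}I+\left(\frac{2(1-\nu)}{(1+\nu)^2}-\frac{2}{1+\nu}\right)K_{\kappa_2}^\top+M,
\]
with $M:H^s(\Gamma)\to H^{s+2}(\Gamma)$; the scalar factor simplifies to $-4\nu/(1+\nu)^2$, giving exactly $\widetilde{R}_{22}^{s,s}$.

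The only real obstacle is the bookkeeping in this expansion: one must be careful that every term which is \emph{not} of the form $\alpha I+\beta K_{\kappa_2}^\top$ really does gain the required two orders of regularity, which forces one to combine $K_\kappa S_\kappa=S_\kappa K_\kappa^\top$ with the two-order smoothing of $K_{\kappa_1}-K_{\kappa_2}$ and the three-order smoothing of $S_{\kappa_1}-S_{\kappa_2}$. As in Lemma~\ref{lemma53d}, if the intermediate operator $2(1+\nu)N_{\kappa_1}-4(1-\nu)N_{\kappa_2}K_{\kappa_2}$ needed in any invertibility-type step happens to fail to be invertible, one may first carry out the argument with auxiliary purely imaginary wavenumbers $\widetilde{k}_1,\widetilde{k}_2$ and then transfer back using Theorem~\ref{thmrev2}. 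No new tools beyond Theorems~\ref{regL},~\ref{thmrev2}, the Calder\'on identities, and the preceding Lemmas~\ref{lemma33dm1} and~\ref{lemma53d} are required.
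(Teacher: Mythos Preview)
Your proposal is correct and follows essentially the same approach as the paper. The only cosmetic difference is the order of the splitting: the paper writes $R_{22}=Y^1(R_{12}-\widetilde{R}_{12}^{s,s})+(Y^1-\widetilde{Y}^1_{-1})\widetilde{R}_{12}^{s,s}+\widetilde{Y}^1_{-1}\widetilde{R}_{12}^{s,s}$, whereas you swap which factor retains the tilde in the two remainder terms; both groupings give $H^s\to H^{s+2}$ remainders by the same estimates, and the expansion of the principal part $\widetilde{Y}^1_{-1}\widetilde{R}_{12}^{s,s}$ via Calder\'on identities is identical. (Your closing remark about passing to auxiliary imaginary wavenumbers is unnecessary here, since no invertibility step is used in this lemma.)
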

\begin{proof} 
Observe that
\begin{eqnarray*}
R_{22}&=&Y^1R_{12}=Y^1(R_{12}-\widetilde{R}_{12}^{s,s})+(Y^1-\widetilde{Y}^1_{-1})\widetilde
{R}_{12}^{s,s}+\widetilde{Y}^1_{-1}\widetilde{R}_{12}^{s,s}\nonumber\\
&=&\widetilde{R}_{22}^{s,s}+M_{22}\nonumber\\
M_{22}&:=&Y^1(R_{12}-\widetilde{R}_{12}^{s,s})+(Y^1-\widetilde{Y}^1_{-1})\widetilde{R}_{12}^
{s,s}-\frac{4}{1+\nu}(K_{\kappa_2}^\top)^2\nonumber\\
&&-\frac{8(1-\nu)}{(1+\nu)^2}(N_{\kappa_1}-N_{\kappa_2})S_{\kappa_2}K_{\kappa_2}
^\top+\frac{16\nu}{(1+\nu)^2}(K_{\kappa_2}^\top)^3+\frac{16(1-\nu)}{(1+\nu)^2}N_{\kappa_2}K_{\kappa_2}S_{\kappa_2}K_{\kappa_2}
^\top.
\end{eqnarray*}
Making use of Theorem~\ref{regL} and Theorem~\ref{thmrev2} we obtain that 
$M_{22}
:H^{s}(\Gamma)\to H^{s+2}(\Gamma)$ continuously, and the result of the 
lemma follows.
\end{proof}

In conclusion, the matrix operator $\mathcal{\widetilde{R}}_1^{s,s}$ takes on 
the form
\begin{equation}\label{eq:tildR13d}
\begin{array}{rclrcl}
\widetilde{R}_{11}^{s,s}&=&\displaystyle\frac{\nu}{1+\nu}I+\frac{4\nu}{(1+\nu)^2}K_{\kappa_2}\quad&
\widetilde{R}_{12}^{s,s}&=&\displaystyle-\frac{2}{1+\nu} 
S_{\kappa_1}-\frac{4(1-\nu)}{(1+\nu)^2}S_{\kappa_2}K_{\kappa_2}^\top \\[2ex]
\widetilde{R}_{21}^{s,s}&=&\displaystyle\frac{2\nu}{1+\nu}N_{\kappa_1}+\frac{4\nu(1-\nu)}{
(1+\nu)^2}N_{\kappa_2}K_{\kappa_2}\quad&
\widetilde{R}_{22}^{s,s}&=&\displaystyle\frac{1}{1+\nu}I 
-\frac{4\nu}{(1+\nu)^2}K_{\kappa_2}^\top,
 \end{array}
\end{equation}
where $\kappa_j$ are wavenumbers such that $\Re(\kappa_j)\geq 0$ and 
$\Im({\kappa_j})> 0$ for $j=1,2$ 
satisfies the assumptions in Theorem~\ref{thm1}. Thus, the matrix operator that 
enters the GCSIE formulation~\eqref{Levadoux_simpl} that uses the operator 
$\mathcal{\widetilde{R}}_1^{s,s}$ defined in equation~\eqref{eq:tildR13d} is a 
compact perturbation of the identity matrix in the space $H^{s}(\Gamma)\times 
H^{s}(\Gamma)$, provided that the operators $(Y^1-\nu Y^2)^{-1}$ and $Y_2$ are well 
defined. The latter assumption is not essential, as we establish next the 
analogue of the result in Theorem~\ref{thm2}: 
\begin{theorem}\label{thm23d}
Let $d=3$ and $\mathcal{\widetilde{D}}^{s,s}$ be the operator in the 
left-hand-side of 
equation~\eqref{Levadoux_simpl} with $\mathcal{\widetilde{R}}_1$ being the 
operator 
$\mathcal{\widetilde{R}}_1^{s,s}$ defined in equation~\eqref{eq:tildR13d}. Then 
the operator $\mathcal{\widetilde{D}}^{s,s}$ is a 
compact perturbation of the identity matrix in the space $H^{s}(\Gamma)\times 
H^{s}(\Gamma)$ for all $s$.
\end{theorem}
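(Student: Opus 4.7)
The plan is to follow the template of Theorem~\ref{thm2}, but now working in three dimensions, where the enhanced smoothing of $K_\kappa$ and $K_\kappa^\top$ used in $d=2$ is no longer available. Concretely, I would substitute the explicit operators $\widetilde{R}_{ij}^{s,s}$ from~\eqref{eq:tildR13d} into the GCSIE operator~\eqref{Levadoux_simpl}, and compute each of the four matrix entries $\widetilde{D}_{ij}^{s,s}$ explicitly in terms of $S_{k_j},K_{k_j},K_{k_j}^\top,N_{k_j}$ ($j=1,2$) and $S_{\kappa_1},S_{\kappa_2},N_{\kappa_1},N_{\kappa_2},K_{\kappa_2},K_{\kappa_2}^\top$. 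The objective is to display each entry in the form $\delta_{ij}I + \widetilde{D}_{ij}^r$, where $\widetilde{D}_{ij}^r$ is a smoothing operator of sufficient order in the sense of Theorem~\ref{regL} and Theorem~\ref{thmrev2} so as to be compact on the appropriate Sobolev space.

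The main bookkeeping step is to use Calder\'on's identities~\eqref{eq:calderon} to collapse the products coming from the composition of $S_{k_j}+\nu^{-1}S_2$-type and $N_{k_j}+\nu N_2$-type factors against $\widetilde{R}_{ij}^{s,s}$. For instance, in the $(1,1)$ entry, the term $-\tfrac{2}{1+\nu}(S_1+\nu^{-1}S_2)N_{\kappa_1}$ produced by $-(S_1+\nu^{-1}S_2)\widetilde R_{21}^{s,s}$ can be written as $-\tfrac{2}{1+\nu}(S_1 N_1+\nu^{-1}S_2 N_2)$ modulo terms containing the differences $N_{\kappa_1}-N_j$, which by Theorem~\ref{thmrev2} already map $H^s\to H^{s+1}$; Calder\'on then gives $S_j N_j = -\tfrac14 I + K_j^2$. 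Combined with the $\tfrac12 I - K_2$ coming from the bare jump and the $(K_1+K_2)\widetilde R_{11}^{s,s}$ contribution, the identity pieces assemble into $I$, and the surviving pieces involve $K_j^2$, $(K_{\kappa_2})^2$, differences of $N_\kappa$'s and $K_{\kappa_2}$'s, and compositions like $S_{\kappa_2}K_{\kappa_2}^\top N_{\kappa_1}$ where the $K_{\kappa_2}$/$K_{\kappa_2}^\top$ correction in~\eqref{eq:tildR13d} plays a decisive role.

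The analogous calculations for the other three entries proceed the same way: in $(1,2)$ and $(2,1)$ the $(S_2-S_1)$ and $(N_1-N_2)$ differences are smoothing by Theorem~\ref{thmrev2}, and the remaining compositions $(K_1+K_2)S_{\kappa_1}$, $S_{\kappa_2}K_{\kappa_2}^\top$, $N_{\kappa_2}K_{\kappa_2}$, and $(K_1^\top+K_2^\top)N_{\kappa_1}$ must be handled with the Calder\'on identities $S_\kappa K_\kappa^\top = K_\kappa S_\kappa$ and $N_\kappa K_\kappa = K_\kappa^\top N_\kappa$ to convert them into expressions whose deviation from the target identity can be absorbed into the correction terms $\tfrac{4\nu}{(1+\nu)^2}K_{\kappa_2}$, $\tfrac{4(1-\nu)}{(1+\nu)^2}S_{\kappa_2}K_{\kappa_2}^\top$, $\tfrac{4\nu(1-\nu)}{(1+\nu)^2}N_{\kappa_2}K_{\kappa_2}$ and $-\tfrac{4\nu}{(1+\nu)^2}K_{\kappa_2}^\top$ chosen in~\eqref{eq:tildR13d}. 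In $(2,2)$, the Calder\'on identity $N_\kappa S_\kappa = -\tfrac14 I + (K_\kappa^\top)^2$ furnishes the leading identity, modulo $(K_{\kappa_2}^\top)^2$ and $(K_{\kappa_2}^\top)^3$ terms which, thanks to $K_{\kappa_2}^\top:H^s\to H^{s+1}$ in $d=3$, map $H^s(\Gamma)\to H^{s+2}(\Gamma)$ and $H^s(\Gamma)\to H^{s+3}(\Gamma)$ respectively; hence they are compact on $H^s(\Gamma)$.

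The hard part will be carrying out these cancellations cleanly and tracking the order of smoothness of each leftover term. Once that is done, every $\widetilde{D}_{ij}^r$ maps $H^s(\Gamma)$ into $H^{s+\alpha_{ij}}(\Gamma)$ with $\alpha_{ij}\geq 1$, so by the compact embedding $H^{s+\alpha}(\Gamma)\hookrightarrow H^s(\Gamma)$ for $\alpha>0$, $\mathcal{\widetilde{D}}^{s,s}-I$ is a compact operator on $H^s(\Gamma)\times H^s(\Gamma)$. Crucially, the derivation nowhere invokes invertibility of $Y^1-\nu Y^2$ or well-posedness of $Y^2$ since all operators in~\eqref{eq:tildR13d} are defined intrinsically via boundary integral operators, which proves the theorem for all $s\in\mathbb{R}$.
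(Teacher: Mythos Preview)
Your proposal is correct and follows essentially the same route as the paper: substitute the regularizers~\eqref{eq:tildR13d} into~\eqref{Levadoux_simpl}, use the Calder\'on identities~\eqref{eq:calderon} to extract the identity on the diagonal, and then read off from Theorems~\ref{regL} and~\ref{thmrev2} that every remaining term smooths by at least one order, hence is compact on $H^s(\Gamma)$. The paper's proof simply records the resulting explicit formulas for $\widetilde{D}_{ij}^{s,s}$ and invokes those two theorems; your more detailed bookkeeping (identifying which Calder\'on relation handles which product, and noting that no invertibility of $Y^1-\nu Y^2$ is used) is exactly the computation that underlies those formulas.
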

\begin{proof} We make use of Calder\'on's identities~\eqref{eq:calderon} to 
express each of the components of the matrix operator 
$\mathcal{\widetilde{D}}^{s,s}$ in the case $d=3$ in the form
\begin{eqnarray}\label{eq:matrix_explicit3d}
\widetilde{D}_{11}^{s,s}&=&I-\frac{1}{1+\nu}K_2+\frac{\nu}{1+\nu}K_1+\frac{4\nu}
{(1+\nu)^2}(K_1+K_2)K_{\kappa_2}-2(K_{\kappa_1})^2\nonumber\\
&&-\frac{2\nu}{1+\nu}\left((S_1-S_{\kappa_1})+\nu^{-1}(S_2-S_{\kappa_1}
)\right)N_{ 
\kappa_1}-\frac{4\nu(1-\nu)}{(1+\nu)^2}(S_1+\nu^{-1}S_2)N_{\kappa_2}K_{\kappa_2}
 \nonumber\\
\widetilde{D}_{12}^{s,s}&=&\frac{1}{1+\nu}(S_2-S_1)-\frac{2}{1+\nu}
(K_1+K_2)\left(S_{\kappa_1}+\frac{2(1-\nu)}{1+\nu}S_{\kappa_2}K_{\kappa_2}
^\top\right)\nonumber\\
&&+\frac{4\nu}{(1+\nu)^2}(S_1+\nu^{-1}S_2)K_{\kappa_2}^\top\nonumber\\
\widetilde{D}_{21}^{s,s}&=&\frac{\nu}{1+\nu}(N_1-N_2)+\frac{4\nu}{(1+\nu)^2}
\left((N_1-N_{\kappa_2})+\nu(N_2-N_{\kappa_2})\right)K_{\kappa_2}\nonumber\\
&&-\frac{2\nu}{1+\nu}(K_1^\top+K_2^\top)(N_{\kappa_1}-N_{\kappa_2})-\frac{2\nu}{
1+\nu}(K_1^\top-K_{\kappa_2}^\top+K_2^\top-K_{\kappa_2}^\top)N_{\kappa_2}
\nonumber\\
&&-\frac{4\nu(1-\nu)}{(1+\nu)^2}(K_1^\top+K_2^\top)N_{\kappa_2}K_{\kappa_2}
\nonumber\\
\widetilde{D}_{22}^{s,s}&=&I+\frac{\nu}{1+\nu}K_2^\top-\frac{1}{1+\nu}
K_1^\top-\frac{2}{1+\nu}\left(N_1-N_{\kappa_1}+\nu(N_2-N_{\kappa_1})\right)S_{
\kappa_1}\nonumber\\
&&-\frac{4(1-\nu)}{(1+\nu)^2}(N_1+\nu 
N_2)S_{\kappa_2}K_{\kappa_2}^\top+\frac{4\nu}{(1+\nu)^2}(K_1^\top+K_2^\top)K_{
\kappa_2}^\top-2(K_{\kappa_1}^\top)^2.
\end{eqnarray}
The result now follows from the mapping properties recounted in 
Theorem~\ref{regL} and Theorem~\ref{thmrev2}.
\end{proof}

Interestingly, if we consider the following modified version of the operator $\widetilde{\mathcal{R}}_1^{s,s}$ given by a new simplified operator $\widetilde{\mathcal{R}}_1^{1,s,s}$ whose entries are defined by
\begin{equation}\label{eq:tildR13dN}
\begin{array}{rclrcl}
\widetilde{R}_{11}^{1,s,s}&=&\displaystyle\frac{\nu}{1+\nu}I+\frac{4\nu}{(1+\nu)^2}K_{\kappa_2}\quad&
\widetilde{R}_{12}^{1,s,s}&=&\displaystyle-\frac{2}{1+\nu} 
S_{\kappa_1} \\[2ex]
\widetilde{R}_{21}^{1,s,s}&=&\displaystyle\frac{2\nu}{1+\nu}N_{\kappa_1}\quad&
\widetilde{R}_{22}^{1,s,s}&=&\displaystyle\frac{1}{1+\nu}I 
-\frac{4\nu}{(1+\nu)^2}K_{\kappa_2}^\top,
 \end{array}
\end{equation}
where $\kappa_j$ are wavenumbers such that $\Re(\kappa_j)\geq 0$ and 
$\Im({\kappa_j})> 0$ for $j=1,2$, then it can be immediately seen that the operator $\widetilde{\mathcal{R}}_1^{1,s,s}$ does satisfy the relaxed smoothing requirements in Remark~\ref{remark1}. We establish 
\begin{theorem}\label{thm23dN}
Let $d=3$ and $\mathcal{\widetilde{D}}^{1,s,s}$ be the operator in the 
left-hand-side of 
equation~\eqref{Levadoux_simpl} with $\mathcal{\widetilde{R}}_1$ being the 
operator 
$\mathcal{\widetilde{R}}_1^{1,s,s}$ defined in equation~\eqref{eq:tildR13dN}. Then 
the operator $\mathcal{\widetilde{D}}^{1,s,s}$ is a 
compact perturbation of the identity matrix in the space $H^{s}(\Gamma)\times 
H^{s}(\Gamma)$ for all $s$.
\end{theorem}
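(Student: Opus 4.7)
The plan is to deduce this result from Remark~\ref{remark1} by comparing $\widetilde{\mathcal{R}}_1^{1,s,s}$ with the operator $\widetilde{\mathcal{R}}_1^{s,s}$ from equation~\eqref{eq:tildR13d}, rather than redoing an explicit block-by-block calculation as in Theorem~\ref{thm23d}. First I would observe that the diagonal entries are unchanged, i.e. $\widetilde{R}_{11}^{1,s,s}=\widetilde{R}_{11}^{s,s}$ and $\widetilde{R}_{22}^{1,s,s}=\widetilde{R}_{22}^{s,s}$, so by Lemmas~\ref{lemma63d} and~\ref{lemma93d} the differences $\widetilde{R}_{11}^{1,s,s}-R_{11}$ and $\widetilde{R}_{22}^{1,s,s}-R_{22}$ already map $H^{s}(\Gamma)\to H^{s+2}(\Gamma)$, which exceeds the relaxed hypotheses of Remark~\ref{remark1}.

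Next I would examine the two off-diagonal entries, which are the only ones that have been simplified. The difference of the $(1,2)$-blocks is
\[
\widetilde{R}_{12}^{s,s}-\widetilde{R}_{12}^{1,s,s}=-\frac{4(1-\nu)}{(1+\nu)^{2}}S_{\kappa_{2}}K_{\kappa_{2}}^{\top},
\]
and in three dimensions Theorem~\ref{regL} gives $K_{\kappa_{2}}^{\top}:H^{s}(\Gamma)\to H^{s+1}(\Gamma)$ and $S_{\kappa_{2}}:H^{s+1}(\Gamma)\to H^{s+2}(\Gamma)$, so this composition maps $H^{s}(\Gamma)\to H^{s+2}(\Gamma)$. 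Combined with Lemma~\ref{lemma53d}, which provides $\widetilde{R}_{12}^{s,s}-R_{12}:H^{s}(\Gamma)\to H^{s+3}(\Gamma)\hookrightarrow H^{s+2}(\Gamma)$, the triangle inequality yields $\widetilde{R}_{12}^{1,s,s}-R_{12}:H^{s}(\Gamma)\to H^{s+2}(\Gamma)$. The $(2,1)$-entry is handled analogously: the subtracted term $\frac{4\nu(1-\nu)}{(1+\nu)^{2}}N_{\kappa_{2}}K_{\kappa_{2}}$ maps $H^{s}(\Gamma)\to H^{s}(\Gamma)$ since $K_{\kappa_{2}}:H^{s}(\Gamma)\to H^{s+1}(\Gamma)$ and $N_{\kappa_{2}}:H^{s+1}(\Gamma)\to H^{s}(\Gamma)$, and Lemma~\ref{lemma83d} gives $\widetilde{R}_{21}^{s,s}-R_{21}:H^{s}(\Gamma)\to H^{s+1}(\Gamma)\hookrightarrow H^{s}(\Gamma)$, hence $\widetilde{R}_{21}^{1,s,s}-R_{21}:H^{s}(\Gamma)\to H^{s}(\Gamma)$.

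At this point all four blocks satisfy exactly the relaxed hypotheses of Remark~\ref{remark1}, so the compactness of $\widetilde{\mathcal{D}}^{1,s,s}-I$ on $H^{s}(\Gamma)\times H^{s}(\Gamma)$ follows. As in Theorem~\ref{thm23d}, the invocation of Lemmas~\ref{lemma53d}--\ref{lemma93d} was made under the auxiliary assumption that $(Y^{1}-\nu Y^{2})^{-1}$ and $Y^{2}$ are well defined; I would remove this assumption in the same manner as in the closing paragraph of the proof of Lemma~\ref{lemma53d}, namely by shifting to auxiliary wavenumbers with zero real part (for which $\widetilde{A}$ is invertible) and then transferring the conclusion back using the smoothing of wavenumber differences provided by Theorem~\ref{thmrev2}. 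The main obstacle is purely bookkeeping: the only genuinely new content is the observation that $S_{\kappa_{2}}K_{\kappa_{2}}^{\top}$ and $N_{\kappa_{2}}K_{\kappa_{2}}$ lose exactly one order of smoothing in three dimensions relative to the enhanced two-dimensional case, which is precisely what makes the relaxed bounds of Remark~\ref{remark1} still available after dropping those correction terms.
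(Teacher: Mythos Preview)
Your argument is correct and takes a genuinely different route from the paper. The paper proves Theorem~\ref{thm23dN} by the same brute-force method as Theorem~\ref{thm23d}: it substitutes $\widetilde{\mathcal{R}}_1^{1,s,s}$ into \eqref{Levadoux_simpl}, applies Calder\'on's identities~\eqref{eq:calderon} to rewrite each block $\widetilde{D}_{ij}^{1,s,s}$ explicitly, and then reads off that $\widetilde{\mathcal{D}}^{1,s,s}-I:H^s\times H^s\to H^{s+1}\times H^{s+1}$ from Theorems~\ref{regL} and~\ref{thmrev2}. Your approach instead leverages the already-proved Lemmas~\ref{lemma53d}--\ref{lemma93d} together with the observation that the dropped correction terms $S_{\kappa_2}K_{\kappa_2}^\top$ and $N_{\kappa_2}K_{\kappa_2}$ smooth by exactly two and zero orders respectively, landing squarely in the relaxed hypotheses of Remark~\ref{remark1}. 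This is shorter and more conceptual; the paper's explicit computation has the advantage of being self-contained and yielding the precise block formulas, which are used later.

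One point needs correcting. Your proposed removal of the standing hypothesis that $(Y^1-\nu Y^2)^{-1}$ and $Y^2$ are well defined does not work as written: the device in the closing paragraph of Lemma~\ref{lemma53d} addresses non-invertibility of the \emph{auxiliary} operator $\widetilde{A}$ (built from $\kappa_1,\kappa_2$), whereas the hypothesis you need to drop concerns the \emph{physical} Dirichlet-to-Neumann operators built from $k_1,k_2$, which are untouched by shifting $\kappa_j$. The clean fix is to bypass $R_{ij}$ altogether: since $\widetilde{\mathcal{D}}$ depends linearly on $\widetilde{\mathcal{R}}$, write
\[
\widetilde{\mathcal{D}}^{1,s,s}-I=\bigl(\widetilde{\mathcal{D}}^{1,s,s}-\widetilde{\mathcal{D}}^{s,s}\bigr)+\bigl(\widetilde{\mathcal{D}}^{s,s}-I\bigr).
\]
The second bracket is compact on $H^s\times H^s$ by Theorem~\ref{thm23d}, which was proved without the hypothesis. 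The first bracket equals $\gamma_T\mathcal{C}$ applied to the explicit difference $\widetilde{\mathcal{R}}^{1,s,s}-\widetilde{\mathcal{R}}^{s,s}$, whose only nonzero blocks are the two correction terms you already analyzed; your smoothing bounds on $S_{\kappa_2}K_{\kappa_2}^\top$ and $N_{\kappa_2}K_{\kappa_2}$, fed through the formulas for $\widetilde{D}_{ij}^r$ in the proof of Theorem~\ref{thm1}, show this piece maps $H^s\times H^s\to H^{s+1}\times H^{s+1}$ and is therefore compact.
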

\begin{proof} We make use of Calder\'on's identities~\eqref{eq:calderon} to 
express each of the components of the matrix operator 
$\mathcal{\widetilde{D}}^{1,s,s}$ in the case $d=3$ in the form
\begin{eqnarray*}
\widetilde{D}_{11}^{1,s,s}&=&I-\frac{1}{1+\nu}K_2+\frac{\nu}{1+\nu}K_1+\frac{4\nu}
{(1+\nu)^2}(K_1+K_2)K_{\kappa_2}-2(K_{\kappa_1})^2\nonumber\\
&&-\frac{2\nu}{1+\nu}\left((S_1-S_{\kappa_1})+\nu^{-1}(S_2-S_{\kappa_1}
)\right)N_{ 
\kappa_1}
 \nonumber\\
\widetilde{D}_{12}^{1,s,s}&=&\frac{1}{1+\nu}(S_2-S_1)-\frac{2}{1+\nu}
(K_1+K_2) S_{\kappa_1}+\frac{4\nu}{(1+\nu)^2}(S_1+\nu^{-1}S_2)K_{\kappa_2}^\top\nonumber\\
\widetilde{D}_{21}^{1,s,s}&=&\frac{\nu}{1+\nu}(N_1-N_2)+\frac{4\nu}{(1+\nu)^2}
\left((N_1-N_{\kappa_2})+\nu(N_2-N_{\kappa_2})\right)K_{\kappa_2}\nonumber\\
&&-\frac{2\nu}{1+\nu}(K_1^\top+K_2^\top)(N_{\kappa_1}-N_{\kappa_2})-\frac{2\nu}{
1+\nu}(K_1^\top-K_{\kappa_2}^\top+K_2^\top-K_{\kappa_2}^\top)N_{\kappa_2}
\nonumber\\
\widetilde{D}_{22}^{1,s,s}&=&I+\frac{\nu}{1+\nu}K_2^\top-\frac{1}{1+\nu}
K_1^\top-\frac{2}{1+\nu}\left(N_1-N_{\kappa_1}+\nu(N_2-N_{\kappa_1})\right)S_{
\kappa_1}-2(K_{\kappa_1}^\top)^2\nonumber\\
&+&\frac{4\nu}{(1+\nu)^2}(K_1^\top+K_2^\top)K_{\kappa_2}^\top.
\end{eqnarray*}
We easily see that $\mathcal{\widetilde{D}}^{1,s,s}-I:H^s(\Gamma)\times H^s(\Gamma)\to
H^{s+1}(\Gamma)\times H^{s+1}(\Gamma)$, from which the result of the Theorem now follows.
\end{proof}

\section{Well-posedness of the Generalized Combined Source Integral 
Equations\label{uniq}}

Having proved in Theorem~\ref{thm2}, Theorem~\ref{thm23dm1}, and 
Theorem~\ref{thm23d} that the various operators $\widetilde{\mathcal{D}}$ are 
Fredholm of the second kind in appropriate Sobolev spaces, we establish in this 
section the invertibility of those operators under certain conditions on the 
wavenumbers $\kappa_1$ and $\kappa_2$ that enter the definitions of the various 
regularizing operators $\widetilde{\mathcal{R}}_1$. We note that the 
regularizing operators $\widetilde{\mathcal{R}}_1$ defined in 
equations~\eqref{eq:tildR1},~\eqref{eq:tildR13dm1},~\eqref{eq:tildR13d}, and~\eqref{eq:tildR13dN} can 
be all defined componentwise by 
\begin{equation}\label{R_1_Cform}
 \begin{array}{rclrcl}
\widetilde{R}_{11}&=&\displaystyle\frac{\nu}{1+\nu}I + 
\frac{4\delta_1\nu}{(1+\nu)^2}K_{\kappa_2} \quad&
\widetilde{R}_{12}&=&\displaystyle -\frac{2}{1+\nu} S_{\kappa_1} 
-\frac{4\delta_2(1-\nu)}{(1+\nu)^2}S_{\kappa_2} K_{\kappa_2}^\top  \\
\widetilde{R}_{21}&=&\displaystyle\frac{2\nu}{1+\nu}N_{\kappa_1}+\frac{4\delta_2\nu(1-\nu)}{
(1+\nu)^2}N_{\kappa_2} K_{\kappa_2} \quad&
\widetilde{R}_{22}&=&\displaystyle\frac{1}{1+\nu}I-\frac{4\delta_1\nu}{(1+\nu)^2}K_{\kappa_2}
^\top
\end{array}
\end{equation}
for wavenumbers $\kappa_j$ such that $\Re\kappa_j\geq 0$ and $\Im \kappa_j>0$ 
for $j=1,2$. In equations~\eqref{R_1_Cform} $\delta_1=\delta_2=0$ in the case of the 
two-dimensional operators $\widetilde{\mathcal{R}}_1^{s,s}$ defined in 
equations~\eqref{eq:tildR1} and in the case of the three-dimensional operators 
$\widetilde{\mathcal{R}}_1^{s,s-1}$ defined in equations~\eqref{eq:tildR13dm1}, $\delta_1=\delta_2=1$ in the case of the three-dimensional operators 
$\widetilde{\mathcal{R}}_1^{s,s}$ defined in equations~\eqref{eq:tildR13d}, and $\delta_1=1,\ \delta_2=0$ in the case of the three-dimensional operators 
$\widetilde{\mathcal{R}}_1^{1,s,s}$ defined in equations~\eqref{eq:tildR13dN}. We 
have

\begin{theorem}\label{thm_well_posedness}
 Let $\widetilde{\mathcal{R}}_1$ be defined as in equations~\eqref{R_1_Cform} 
and let
  $\kappa_1$ be a wavenumber such that $\Re\kappa_1>0$ and $\Im\kappa_1>0$, and 
$\kappa_2=i\epsilon,\ \epsilon>0$. Then
\begin{enumerate}
\item in the case $d=2$ the operators $\widetilde{\mathcal{D}}^{s,s}$ defined 
in Theorem~\ref{thm2} are invertible with continuous inverses in the spaces 
$H^{s}(\Gamma)\times H^{s}(\Gamma)$ for all $s$;
\item in the case $d=3$ the operators $\widetilde{\mathcal{D}}^{s,s-1}$ defined 
in Theorem~\ref{thm23dm1} are invertible with continuous inverses in the spaces 
$H^{s}(\Gamma)\times H^{s-1}(\Gamma)$ for all $s$;
\item in the case $d=3$ the operators $\widetilde{\mathcal{D}}^{s,s-1}$ defined 
in Theorem~\ref{thm23dm1} are invertible with continuous inverses in the spaces 
$H^{s}(\Gamma)\times H^{s}(\Gamma)$ for all $s$;
\item in the case $d=3$ the operators $\widetilde{\mathcal{D}}^{s,s}$ defined 
in Theorem~\ref{thm23d} are invertible with continuous inverses in the spaces 
$H^{s}(\Gamma)\times H^{s}(\Gamma)$ for all $s$.
\item in the case $d=3$ the operators $\widetilde{\mathcal{D}}^{1,s,s}$ defined 
in Theorem~\ref{thm23dN} are invertible with continuous inverses in the spaces 
$H^{s}(\Gamma)\times H^{s}(\Gamma)$ for all $s$.
\end{enumerate}
\end{theorem}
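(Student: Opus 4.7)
The plan is to invoke the Fredholm alternative. Each $\widetilde{\mathcal{D}}$ was shown in the previous sections to be a compact perturbation of an invertible operator in the relevant Sobolev pair (hence Fredholm of index zero), so invertibility with a bounded inverse follows once injectivity is established. I will carry out the injectivity argument in detail for item 1 and indicate the extensions for the remaining items.

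Suppose $\widetilde{\mathcal{D}}(a,b)=0$. Writing $\psi=\widetilde{R}_{11}a+\widetilde{R}_{12}b$ and $\varphi=\widetilde{R}_{21}a+\widetilde{R}_{22}b$, the fields
\[
u^1 = DL_1\psi - SL_1\varphi,\qquad u^2 = -DL_2(\psi-a)+\nu^{-1}SL_2(\varphi-b)
\]
solve the homogeneous transmission problem by construction of the GCSIE, so uniqueness forces $u^1\equiv 0$ in $D_1$ and $u^2\equiv 0$ in $D_2$. The jump relations~\eqref{traces} then yield $\gamma_D^2 u^1=-\psi$, $\gamma_N^2 u^1=-\varphi$, $\gamma_D^1 u^2 = a-\psi$, $\gamma_N^1 u^2 = \nu^{-1}(b-\varphi)$, so $u^1|_{D_2}$ is a Helmholtz solution for wavenumber $k_1$ and $u^2|_{D_1}$ is a radiating Helmholtz solution for wavenumber $k_2$. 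Green's first identity in $D_2$ forces $\Im\int_\Gamma \varphi\,\bar\psi\,ds=0$, while Green's first identity in $D_1\cap B_R$ combined with the Sommerfeld condition gives
\[
\nu^{-1}\,\Im\int_\Gamma(b-\varphi)\,\overline{(a-\psi)}\,ds = k_2\lim_{R\to\infty}\int_{|x|=R}|u^2|^2\,ds\ge 0.
\]

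The complementary ingredient is a pair of positivity identities, obtained by applying Green's first identity to the whole-space potentials $v_S=SL_{\kappa_1}b$ and $v_D=DL_{\kappa_1}a$ (both of which decay exponentially at infinity because $\Im\kappa_1>0$):
\[
\Im\!\int_\Gamma b\,\overline{S_{\kappa_1}b}\,ds = -2\Re(\kappa_1)\Im(\kappa_1)\,\|v_S\|_{L^2(\mathbb{R}^d)}^2,\qquad \Im\!\int_\Gamma (N_{\kappa_1}a)\,\bar a\,ds = 2\Re(\kappa_1)\Im(\kappa_1)\,\|v_D\|_{L^2(\mathbb{R}^d)}^2,
\]
both strict whenever the density is nonzero (the jumps of $SL_{\kappa_1}b$ and $DL_{\kappa_1}a$ recover $b$ and $a$). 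Substituting~\eqref{eq:tildR1} into the two boundary identities, eliminating the cross term $\Im\int N_{\kappa_1}a\,\overline{S_{\kappa_1}b}\,ds$ via the Calder\'on identity $N_{\kappa_1}S_{\kappa_1}=-\tfrac14 I+(K_{\kappa_1}^\top)^2$, and using the trivial cancellation $\Im\langle b,a\rangle+\Im\langle a,b\rangle=0$, the combined inequality collapses to
\[
-\frac{2(1+\nu)}{\nu}\Re(\kappa_1)\Im(\kappa_1)\bigl(\|v_S\|^2+\nu\|v_D\|^2\bigr)\ge 0,
\]
which forces $v_S\equiv 0\equiv v_D$ in $\mathbb{R}^d$ and hence $a=b=0$. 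Item 2 is proved by the identical computation with the three-dimensional layer operators in place of the two-dimensional ones, and item 3 is an immediate consequence of item 2 because injectivity in $H^s(\Gamma)\times H^{s-1}(\Gamma)$ entails injectivity on its subspace $H^s(\Gamma)\times H^s(\Gamma)$.

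For items 4 and 5, the regularizers~\eqref{eq:tildR13d} and~\eqref{eq:tildR13dN} carry extra terms involving $K_{\kappa_2}$, $K_{\kappa_2}^\top$, $S_{\kappa_2}K_{\kappa_2}^\top$, $N_{\kappa_2}K_{\kappa_2}$ with $\kappa_2=i\epsilon$. Because $G_{i\epsilon}$ is a real-valued kernel, $S_{i\epsilon}$ and $N_{i\epsilon}$ are self-adjoint and strictly positive on $L^2(\Gamma)$ and $K_{i\epsilon}^*=K_{i\epsilon}^\top$. The same setup yields a real equation from $D_2$ and a nonnegative radiated-energy inequality from $D_1$; the additional contributions from the $\kappa_2$-operators become \emph{real} quadratic forms in $(a,b)$ on both sides, after using $K_\kappa S_\kappa=S_\kappa K_\kappa^\top$ and $N_\kappa K_\kappa=K_\kappa^\top N_\kappa$ to commute them through, and therefore drop out on taking imaginary parts. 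What remains is again the positive-definite bound on $\|SL_{\kappa_1}b\|^2$ and $\|DL_{\kappa_1}a\|^2$, forcing $a=b=0$. The principal obstacle is the algebraic bookkeeping for item 4, where one must carry out the Calder\'on-calculus simplification of several compositions such as $S_{\kappa_2}K_{\kappa_2}^\top$ and $N_{\kappa_2}K_{\kappa_2}$, check carefully that their sesquilinear contributions in $(a,b)$ are conjugate-symmetric, and thereby confirm that their imaginary parts vanish identically. Once this accounting is verified, the Fredholm alternative delivers all five invertibility statements.
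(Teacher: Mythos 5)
Correct, and essentially the paper's own argument: Fredholm of index zero from Theorems~\ref{thm2}, \ref{thm23dm1}, \ref{thm23dm1ss}, \ref{thm23d}, \ref{thm23dN}, then injectivity via transmission uniqueness, the jump relations, Green's identities in $D_2$ and in $D_1\cap B_R$, the strict positivity of $\Im\int_\Gamma(S_{\kappa_1}b)\,\overline{b}\,d\sigma$ and $\Im\int_\Gamma(N_{\kappa_1}a)\,\overline{a}\,d\sigma$ (which you rederive from Green's identity applied to the whole-space potentials, where the paper simply cites~\cite{turc1}), and the vanishing of the imaginary parts of the $\kappa_2=i\epsilon$ contributions --- the ``bookkeeping'' you defer for items 4 and 5 is precisely the paper's Lemma~\ref{lemma:A01}, a three-line check that $S_{i\epsilon}K^\top_{i\epsilon}$ and $N_{i\epsilon}K_{i\epsilon}$ are real self-adjoint via~\eqref{eq:calderon}. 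One small correction to your narrative for item 1: the cross term $\Im\int_\Gamma N_{\kappa_1}a\,\overline{S_{\kappa_1}b}\,d\sigma$ never needs a Calder\'on identity (which would not apply to that sesquilinear pairing anyway, the $L^2$-adjoint of $S_{\kappa_1}$ being $S_{\overline{\kappa_1}}$); it sits inside $\int_\Gamma\varphi\,\overline{\psi}\,d\sigma$, which your $D_2$ identity already shows is real in its entirety, and the surviving density cross terms are all multiples of $\Im\int_\Gamma b\,\overline{a}\,d\sigma$ cancelling because $1-\tfrac{\nu}{1+\nu}-\tfrac{1}{1+\nu}=0$, not by conjugate symmetry.
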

\begin{proof}
Given that the operators $\widetilde{\mathcal{D}}^{s,s}$ are Fredholm of the 
second kind in the spaces $H^{s}(\Gamma)\times H^{s}(\Gamma)$ for both $d=2$ 
and $d=3$ (see Theorem~\ref{thm2} and Theorem~\ref{thm23d} respectively), the operators $\widetilde{\mathcal{D}}^{1,s,s}$ are Fredholm of the 
second kind in the spaces $H^{s}(\Gamma)\times H^{s}(\Gamma)$ for $d=3$ (see Theorem~\ref{thm23dN}), and 
the operators $\widetilde{\mathcal{D}}^{s,s-1}$ are Fredholm of the second kind 
in the spaces $H^{s}(\Gamma)\times H^{s-1}(\Gamma)$ for $d=3$ (see 
Theorem~\ref{thm23dm1}) and in the spaces $H^{s}(\Gamma)\times H^{s}(\Gamma)$ for $d=3$ (see Theorem~\ref{thm23dm1ss}), it suffices to establish that these operators are 
injective in the corresponding spaces in order to conclude their invertibility 
and the continuity of their inverses. Let us assume that $(a,b)$ is in the 
kernel of a generic operator $\widetilde{\mathcal{D}}$ corresponding to the 
generic regularizing operator $\widetilde{R}_{ij},\ i,j=1,2$ defined in 
equations~\eqref{R_1_Cform}. Let us define
\begin{equation}
u^1(\mathbf{z})= [DL_1(\widetilde{R}_{11} a+\widetilde{R}_{12} 
b)](\mathbf{z})-[SL_1(\widetilde{R}_{21} a+\widetilde{R}_{22} 
b)](\mathbf{z}),\quad \mathbf{z}\in\mathbb{R}^d\setminus\Gamma\nonumber
\end{equation}
and
\begin{equation}
u^2(\mathbf{z})= -[DL_2(\widetilde{R}_{11} a+\widetilde{R}_{12} 
b-a)](\mathbf{z})+\nu^{-1} [SL_2(\widetilde{R}_{21} a+\widetilde{R}_{22} 
b-b)](\mathbf{z}),\quad \mathbf{z}\in\mathbb{R}^d\setminus\Gamma.\nonumber
\end{equation}
Obviously $u^1|_{D_1}$ and $u^2|_{D_2}$ are solutions of the transmission 
problem~\eqref{eq:Ac_i}-\eqref{eq:bc}. Given that the wavenumbers $k_1$ and $k_2$ are 
real, classical results about uniqueness of transmission 
problems~\cite{KressRoach} give us that $u_1=0$ in $D_1$ and $u_2=0$ in $D_2$ 
which implies that $\gamma_D^1u^1=\gamma_N^1u^1=0$ and 
$\gamma_D^2u^2=\gamma_N^2u^2=0$. We use the well known jump formulas of the 
layer potentials and we get
\begin{eqnarray}\label{jump_final}
\gamma_D^2u^1=-\widetilde{R}_{11}a-\widetilde{R}_{12} 
b,&&\gamma_N^2u^1=-\widetilde{R}_{21} a-\widetilde{R}_{22} b\nonumber\\
\gamma_D^1u^2=a-\widetilde{R}_{11} a-\widetilde{R}_{12} 
b,&&\gamma_N^1u^2=\nu^{-1}(b-\widetilde{R}_{21}a-\widetilde{R}_{22} b).
\end{eqnarray}
We have then
\begin{eqnarray}
\nu \int_\Gamma \gamma_D^1u^2\ \overline{\gamma_N^1u^2}d\sigma&=&\int_\Gamma a\ 
\overline{b}\ d\sigma -\int_\Gamma a\ \overline{\widetilde{R}_{21}a}\ {d\sigma} 
- \int_\Gamma a\ \overline{\widetilde{R}_{22}b}\ {d\sigma}\nonumber\\
&&-\int_\Gamma (\widetilde{R}_{11}a)\ \overline{b}\ {d\sigma} - \int_\Gamma 
(\widetilde{R}_{12}b)\ \overline{b}\ {d\sigma} + \int_\Gamma \gamma_D^2u^1 
\overline{\gamma_N^2u^1}{d\sigma}.\nonumber
\end{eqnarray}
The previous relation is equivalent to 
\begin{eqnarray}
\nu \int_\Gamma \gamma_D^1u^2\ 
\overline{\gamma_N^1u^2}{d\sigma}&=&-\frac{2\nu}{1+\nu}\int_\Gamma a\ 
\overline{N_{\kappa_1} a}\ {d\sigma} 
+\frac{4\nu\delta_1}{(1+\nu)^2}\left(\int_\Gamma a\ \overline{K_{i\epsilon}^\top 
b}\ {d\sigma} -\int_\Gamma K_{i\epsilon}a\ \overline{b}\ 
{d\sigma}\right)\nonumber\\
&&+\frac{2}{1+\nu}\int_\Gamma (S_{\kappa_1} b)\ \overline{b}\ 
{d\sigma}+\frac{4\delta_2(1-\nu)}{(1+\nu)^2}\int_\Gamma 
(S_{i\epsilon}K_{i\epsilon}^\top b)\ \overline{b}\ {d\sigma}\nonumber\\
&&-\frac{4\delta_2\nu(1-\nu)}{(1+\nu)^2}\int_\Gamma a\ 
\overline{N_{i\epsilon}K_{i\epsilon}a}\ {d\sigma} +\int_\Gamma \gamma_D^2u^1 
\overline{\gamma_N^2u^1}{d\sigma}\nonumber
\end{eqnarray}
which can be also written as
\begin{eqnarray}\label{eq:uniq1}
\nu \int_\Gamma \gamma_D^1u^2\ 
\overline{\gamma_N^1u^2}{d\sigma}&=&-\frac{2\nu}{1+\nu}
\int_\Gamma 
a\ \overline{N_{\kappa_1}a}\ {d\sigma}+\frac{2}{1+\nu}\int_\Gamma (S_{\kappa_1} b)\ 
\overline{b}\ {d\sigma}\nonumber\\
&&+\frac{4\delta_2(1-\nu)}{(1+\nu)^2}\int_\Gamma(S_{i\epsilon}K_{i\epsilon}^\top 
b)\ \overline{b}\ {d\sigma}-\frac{4\delta_2\nu(1-\nu)}{(1+\nu)^2}\int_\Gamma a\ 
\overline{N_{i\epsilon}K_{i\epsilon}a}\ {d\sigma} \nonumber\\
&&+\int_{D_2}(-k_1^2|u|^2+|\nabla u|^2)dx
\end{eqnarray}
if we use Green's identities and the fact that $K_{i\epsilon}$ is the adjoint 
of the operator $K_{i\epsilon}^\top$ with respect to the complex scalar product 
on $L^2(\Gamma)$. If we take the imaginary part  in  both sides of 
equation~\eqref{eq:uniq1} and use the result from Lemma \ref{lemma:A01}, namely
$$\Im \int_\Gamma (S_{i\epsilon}K_{i\epsilon}^\top b)\ \overline{b}\ 
{d\sigma}=0\qquad \Im \int_\Gamma a\ \overline{N_{i\epsilon}K_{i\epsilon}a}\ 
{d\sigma}=0$$ 
we conclude
\begin{equation}
\nu \Im \int_\Gamma \gamma_D^1u^2\ \overline{\gamma_N^1u^2}{d\sigma}=\frac{2\nu}{1+\nu}\ \Im \int_\Gamma 
(N_{\kappa_1} a)\ \overline{a}\ {d\sigma} + \frac{2}{1+\nu}\Im \int_\Gamma 
(S_{\kappa_1} b)\ \overline{b}\ {d\sigma}.\nonumber\\
\end{equation} 
But we have the following positivity property~\cite{turc1} 
\[
\Im \int_\Gamma (S_{\kappa_1} \varphi)\ \overline{\varphi}\ {d\sigma}>0,\ 
\varphi\neq 0,\quad 
\Im \int_\Gamma (N_{\kappa_1} \psi)\ \overline{\psi}\ {d\sigma}>0,\ \psi\neq 0
\]
and thus
\[
\Im \int_\Gamma \gamma_D^1u^2\ \overline{\gamma_N^1u^2}{d\sigma}\geq 0.
\]
Since $u^2$ is a radiative solution of the Helmholtz equation with
wavenumber $k_2$ in the domain $D_1$ it
follows~\cite[p. 78]{KressColton} that $u^2=0$ in $D_1$, and,
thus
\[
\Im \int_\Gamma (S_{\kappa_1} a)\ \overline{a}\ {d\sigma}=0,\qquad \Im \int_\Gamma (N_{\kappa_1} b)\ \overline{b}\ {d\sigma}=0.
\]
Therefore, necessary   both $a=0$  and $b=0$ on $\Gamma$ which proves the 
theorem.
\end{proof}

\begin{remark} As pointed out in~\cite{turc2}, another possible choice of regularizing operators that leads to results qualitatively similar to those in Theorem~\ref{thm_well_posedness} consists of Fourier multipliers whose symbols are equal to the principal symbols of the boundary layer operators featured in equations~\eqref{R_1_Cform} when the latter are viewed as pseudodifferential operators. In the three dimensions, the principal symbols of the latter operators can be expressed in terms of the variable $\mathbf{\xi}\in TM^{*}(\Gamma)$ (which represents the Fourier symbol of the tangential gradient operator $\nabla_\Gamma$) where $TM^{*}(\Gamma)$ represents the cotangent bundle of $\Gamma$~\cite{Taylor}:
\begin{equation}\label{R_1_PS}
 \begin{array}{rclrcl}
\sigma(N_{\kappa_1})(\mathbf{x},\xi)&=&\displaystyle -\frac{1}{2}(|\xi|^2-\kappa_1^2)^{1/2} \quad&
\sigma(S_{\kappa_1})(\mathbf{x},\xi)&=&\displaystyle \frac{1}{2(|\xi|^2-\kappa_1^2)^{1/2}}  \\
\sigma(K_{\kappa_2})(\mathbf{x},\xi)&=&\displaystyle \frac{\mathcal{K}(\mathbf{x})\xi\cdot\xi}{2(|\xi|^2-\kappa_2^2)^{3/2}}-\frac{H(\mathbf{x})}{(|\xi|^2-\kappa_2^2)^{1/2}}\quad&
\sigma(K_{\kappa_2}^\top)(\mathbf{x},\xi)&=&\displaystyle \sigma(K_{\kappa_2})(\mathbf{x},\xi).
\end{array}
\end{equation}
In equations~\eqref{R_1_PS} above, $\mathcal{K}(\mathbf{x})=\nabla{\mathbf{n}}(\mathbf{x})$ is the curvature tensor of the surface $\Gamma$ at $\mathbf{x}\in\Gamma$ and $H(\mathbf{x})$ is the principal curvature of the surface $\Gamma$ at $\mathbf{x}\in\Gamma$. We recall that a Fourier multiplier $\mathcal{S}$ with symbol $\sigma(\mathbf{x},\xi)$ is defined as
\[
(\mathcal{S}\varphi)(\mathbf{x})=\int \sigma(\mathbf{x},\xi)e^{i\mathbf{x}\cdot \xi}\widehat{\varphi}(\xi)d\xi
\]
where $\varphi$ as a function defined on $\Gamma$ and $\hat{\varphi}$ is its Fourier transform on the manifold $\Gamma$~\cite{Taylor}. In two dimensions the principal symbols $\sigma(N_{\kappa_1})(\mathbf{x},\xi)$ and $\sigma(S_{\kappa_1})(\mathbf{x},\xi)$ are the same as in three dimensions, while $\sigma(K_{\kappa_1})(\mathbf{x},\xi)=\sigma(K_{\kappa_1}^\top)(\mathbf{x},\xi)=0$.
\end{remark}

\section{Conclusions}

We presented regularized Combined Field Integral
Equations formulations for the solution of acoustic transmission problems. In 
this context, the regularizing operator is defined naturally as an 
approximation to the admittance operator that maps the boundary data of 
transmission problems (that is differences of Dirichlet and Neumann data on the 
interface of material discontinuity) to the Cauchy data on the boundary of each 
medium. The construction of the regularizing operators relies on approximations
of Dirichlet-to-Neumann operators in each medium via suitable boundary layer 
operators with complex wavenumbers and Calder\'on's calculus. Certain 
positivity properties of the imaginary parts of boundary layer operators with 
complex wavenumbers are used in order to prove the well-posedness of the 
formulations. As shown elsewhere~\cite{turc2}, solvers based on the new 
formulations outperform solvers based on other existing integral formulations 
of transmission problems. The extension of this work to the electromagnetic 
case is currently underway. Finally, we mention that in the case $d=2$, we can select the complex 
wavenumber $\kappa$ in the definition~\eqref{eq:tildR1} of the regularizing 
operator $\widetilde{\mathcal{R}}_1^{s,s}$ so that solvers based on the 
formulation GCSIE with corresponding integral operators 
$\widetilde{\mathcal{D}}^{s,s}$ outperform solvers based on the classical 
integral formulations of transmission 
problems~\cite{KressRoach,KittapaKleinman,rokhlin-dielectric} in the 
high-contrast, high-frequency regime~\cite{turc2}. This is also the case in 
three dimensions as confirmed by our preliminary results~\cite{turc3}.

\section*{Acknowledgments}
 Yassine Boubendir gratefully acknowledge support from NSF through contract 
DMS-1319720. Catalin Turc gratefully acknowledge support from NSF through 
contract DMS-1312169. V\'{\i}ctor Dom\'{\i}nguez is partially 
supported by MICINN
Project MTM2010-21037.  Part of this research was carried out
during a short visit of Prof. V\'{\i}ctor Dom\'{\i}nguez to NJIT.

\appendix
\section{Positiveness properties for some operators}\label{app1}

We prove in this section the following result: 
\begin{lemma}\label{lemma:A01} Let $\epsilon>0$. Then, for any $(a,b)\in 
H^{1/2}(\Gamma)\times H^{-1/2}(\Gamma)$ it holds
$$\Im \int_\Gamma (S_{i\epsilon}K_{i\epsilon}^\top b)\ \overline{b}\ 
{d\sigma}=0,\qquad \Im \int_\Gamma a\ \overline{N_{i\epsilon}K_{i\epsilon}a}\ 
{d\sigma}=0.$$
\end{lemma}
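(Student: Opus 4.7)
The plan is to exploit two facts simultaneously: (i) for purely imaginary wavenumber $\kappa = i\epsilon$ with $\epsilon>0$, the free-space Green's function $G_{i\epsilon}$ is \emph{real-valued} (in 2D one has $G_{i\epsilon}(\mathbf x) = \tfrac{1}{2\pi} K_0(\epsilon|\mathbf x|)$, and in 3D $G_{i\epsilon}(\mathbf x) = e^{-\epsilon|\mathbf x|}/(4\pi|\mathbf x|)$), and (ii) the third Calder\'on identity in \eqref{eq:calderon}. The first fact will give me that the boundary layer operators behave nicely under the complex $L^2(\Gamma)$ sesquilinear pairing, and the second will give the self-adjointness needed to conclude that the quadratic forms are real.

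First I would record the adjoint relations for $\kappa=i\epsilon$ with respect to the complex pairing $\langle \varphi,\psi\rangle := \int_\Gamma \varphi\,\overline{\psi}\,d\sigma$. Because $G_{i\epsilon}$ is real and the kernels of $S$ and $N$ are symmetric in $(\mathbf x,\mathbf y)$, a direct unfolding of the double integral shows
\[
S_{i\epsilon}^* = S_{i\epsilon}, \qquad N_{i\epsilon}^* = N_{i\epsilon}, \qquad K_{i\epsilon}^* = K_{i\epsilon}^\top.
\]
(Here the only subtlety is handling the signs arising from $\nabla_{\mathbf y}G_{i\epsilon}(\mathbf x-\mathbf y) = -\nabla_{\mathbf x}G_{i\epsilon}(\mathbf x-\mathbf y)$, which cancel out correctly since $G$ is radial, yielding exactly $K_{i\epsilon}^* = K_{i\epsilon}^\top$.)

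Next I would combine these adjoint identities with Calder\'on's calculus \eqref{eq:calderon}. For the first claim, set $T_1 := S_{i\epsilon}K_{i\epsilon}^\top$. Then
\[
T_1^* = (K_{i\epsilon}^\top)^* S_{i\epsilon}^* = K_{i\epsilon}S_{i\epsilon} = S_{i\epsilon}K_{i\epsilon}^\top = T_1,
\]
where the middle equality is exactly the Calder\'on identity $K_\kappa S_\kappa = S_\kappa K_\kappa^\top$. Since $T_1$ is self-adjoint, $\langle T_1 b, b\rangle \in \mathbb R$ for every $b$, i.e.\ $\Im\int_\Gamma (S_{i\epsilon}K_{i\epsilon}^\top b)\overline{b}\,d\sigma = 0$. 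For the second claim, set $T_2 := N_{i\epsilon}K_{i\epsilon}$. Then
\[
T_2^* = K_{i\epsilon}^* N_{i\epsilon}^* = K_{i\epsilon}^\top N_{i\epsilon} = N_{i\epsilon}K_{i\epsilon} = T_2,
\]
using the Calder\'on identity $N_\kappa K_\kappa = K_\kappa^\top N_\kappa$. Hence $\langle a, T_2 a\rangle \in \mathbb R$, which is exactly $\Im\int_\Gamma a\,\overline{N_{i\epsilon}K_{i\epsilon}a}\,d\sigma = 0$.

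There is no real obstacle here: once one recognizes that $\kappa=i\epsilon$ makes the fundamental solution real, the proof is a one-line consequence of Calder\'on's identities. The only step that requires any care is verifying $K_{i\epsilon}^* = K_{i\epsilon}^\top$ as sesquilinear adjoints (as opposed to merely bilinear transposes), which is a matter of tracking complex conjugates against a real kernel. The duality pairings are well-defined on $H^{1/2}(\Gamma)\times H^{-1/2}(\Gamma)$ by the mapping properties in Theorem~\ref{regL}, so the quadratic forms $\langle T_1 b,b\rangle$ and $\langle a, T_2 a\rangle$ make sense for the stated regularity of $a$ and $b$.
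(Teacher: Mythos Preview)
Your proof is correct and follows essentially the same approach as the paper: both arguments hinge on the realness of the kernel $G_{i\epsilon}$ and the Calder\'on identity $K_\kappa S_\kappa = S_\kappa K_\kappa^\top$ (resp.\ $N_\kappa K_\kappa = K_\kappa^\top N_\kappa$) to show that $S_{i\epsilon}K_{i\epsilon}^\top$ and $N_{i\epsilon}K_{i\epsilon}$ are self-adjoint with respect to the complex $L^2(\Gamma)$ pairing. The only cosmetic difference is that the paper splits $b=\varphi+i\psi$ into real and imaginary parts and verifies the symmetry on real test functions, whereas you work directly with the sesquilinear adjoint; these are equivalent formulations of the same argument.
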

\begin{proof} Observe that $S_{i\varepsilon} K^\top_{i\varepsilon}$ is a real self-adjoint operator. Indeed, if $\varphi$ and $psi$ are real functions defined on $\Gamma$, then  taking into account Calder\'on' s identity $S_{i\varepsilon} K^\top_{i\varepsilon}=K_{i\varepsilon} S_{i\varepsilon}$~\eqref{eq:calderon} we obtain 
\[
\int_{\Gamma} \varphi\ S_{i\varepsilon} K^\top_{i\varepsilon} \psi d\sigma=
\int_{\Gamma} \varphi\ K_{i\varepsilon} S_{i\varepsilon} \psi\ d\sigma =\int_\Gamma K_{i\varepsilon}^\top \varphi\ S_{i\varepsilon}\psi\ d\sigma=\int_{\Gamma} \psi\ S_{i\varepsilon} 
K_{i\varepsilon}^\top \varphi\ d\sigma.
\]
 Then, given $b\in H^{-1/2}(\Gamma)$ such that $b=\varphi+i\psi$ where $\varphi$ and $psi$ are real functions defined on $\Gamma$, we have 
\[
 \Im \int_{\Gamma} S_{i\varepsilon} K^\top_{i\varepsilon} b\ \overline{b}\ d\sigma=\int_\Gamma \varphi\ S_{i\varepsilon} K^\top_{i\varepsilon} \psi\ d\sigma - \int_\Gamma \psi\ S_{i\varepsilon} K^\top_{i\varepsilon} \varphi\ d\sigma = 0.
\]
The proof for $N_{i\varepsilon}K_{i\varepsilon} $ is analogous.
\end{proof}

\bibliographystyle{hplain}
\bibliography{biblioTransmission.bib}

\begin{thebibliography}{10}

\bibitem{AntoineX}
X.~Antoine and M.~Darbas.
\newblock Alternative integral equations for the iterative solution of acoustic
  scattering problems.
\newblock {\em Quart. J. Mech. Appl. Math.}, 58(1):107--128, 2005.

\bibitem{Antoine}
X.~Antoine and M.~Darbas.
\newblock Generalized combined field integral equations for the iterative
  solution of the three-dimensional {H}elmholtz equation.
\newblock {\em M2AN Math. Model. Numer. Anal.}, 41(1):147--167, 2007.

\bibitem{turc2}
Y.~Boubendir, O.P. Bruno, D.~Levadoux, and C.~Turc.
\newblock Integral equations requiring small numbers of krylov-subspace
  iterations for two-dimensional penetrable scattering problems, 2013,
  arXiv:1310.1416.

\bibitem{turc3}
Y.~Boubendir, V.~Dom\'{\i}nguez, and C.~Turc.
\newblock Well-conditioned integral equations for three-dimensional acoustic
  transmission problems at high-frequencies.
\newblock in progress 2013.

\bibitem{turc1}
Y.~Boubendir and C.~Turc.
\newblock Wave-number estimates for regularized combined field boundary
  integral operators in acoustic scattering problems with neumann boundary
  conditions.
\newblock {\em IMA Journal of Numerical Analysis}, 33(4):1176--1225, 2013.

\bibitem{BrackhageWerner}
H.~Brakhage and P.~Werner.
\newblock \"{U}ber das {D}irichletsche {A}ussenraumproblem f\"ur die
  {H}elmholtzsche {S}chwingungsgleichung.
\newblock {\em Arch. Math.}, 16:325--329, 1965.

\bibitem{br-turc}
O.P. Bruno, Tim Elling, and C.~Turc.
\newblock Regularized integral equations and fast high-order solvers for
  sound-hard acoustic scattering problems.
\newblock {\em Internat. J. Numer. Methods Engrg.}, 91(10):1045--1072, 2012.

\bibitem{BurtonMiller}
A.~J. Burton and G.~F. Miller.
\newblock The application of integral equation methods to the numerical
  solution of some exterior boundary-value problems.
\newblock {\em Proc. Roy. Soc. London. Ser. A}, 323:201--210, 1971.
\newblock A discussion on numerical analysis of partial differential equations
  (1970).

\bibitem{KressColton}
D.~Colton and R.~Kress.
\newblock {\em Integral equation methods in scattering theory}.
\newblock Pure and Applied Mathematics (New York). John Wiley \& Sons Inc., New
  York, 1983.
\newblock A Wiley-Interscience Publication.

\bibitem{costabel-stephan}
M.~Costabel and E.~Stephan.
\newblock A direct boundary integral equation method for transmission problems.
\newblock {\em J. Math. Anal. Appl.}, 106(2):367--413, 1985.

\bibitem{KittapaKleinman}
R.~Kittappa and R.~E. Kleinman.
\newblock Acoustic scattering by penetrable homogeneous objects.
\newblock {\em J. Mathematical Phys.}, 16:421--432, 1975.

\bibitem{KleinmanMartin}
R.~E. Kleinman and P.~A. M.
\newblock On single integral equations for the transmission problem of
  acoustics.
\newblock {\em SIAM J. Appl. Math.}, 48(2):307--325, 1988.

\bibitem{KressRoach}
R.~Kress and G.~F. Roach.
\newblock Transmission problems for the {H}elmholtz equation.
\newblock {\em J. Mathematical Phys.}, 19(6):1433--1437, 1978.

\bibitem{Levadoux}
D.~Levadoux.
\newblock {\em Etude d'une {\'e}quation int{\'e}grale adapt{\'e}e {\`a} la
  r{\'e}solution hautes fr{\'e}quences de l'{\'e}quation d'Helmholtz}.
\newblock PhD thesis, Universit\'e de Paris VI France, 2001.

\bibitem{Levadoux1}
D.~Levadoux.
\newblock A new integral formalism for transmission problems of
  electromagnetism.
\newblock In {\em 8th International Conference on Mathematical and Numerical
  Aspects of Waves, Reading}, pages 90--92, 2007.

\bibitem{Levadoux2}
D.P. Levadoux, F.~Millot, and S.~Pernet.
\newblock New trends in the preconditioning of integral equations of
  electromagnetism.
\newblock In Janne Roos and Luis~R.J. Costa, editors, {\em Scientific Computing
  in Electrical Engineering SCEE 2008}, Mathematics in Industry, pages
  383--394. Springer Berlin Heidelberg, 2010.

\bibitem{mclean:2000}
W.~McLean.
\newblock {\em Strongly elliptic systems and boundary integral equations}.
\newblock Cambridge University Press, Cambridge, 2000.

\bibitem{muller}
C.~M{\"u}ller.
\newblock {\em Foundations of the mathematical theory of electromagnetic
  waves}.
\newblock Revised and enlarged translation from the German. Die Grundlehren der
  mathematischen Wissenschaften, Band 155. Springer-Verlag, New York, 1969.

\bibitem{rokhlin-dielectric}
V.~Rokhlin.
\newblock Solution of acoustic scattering problems by means of second kind
  integral equations.
\newblock {\em Wave Motion}, 5(3):257 -- 272, 1983.

\bibitem{Taylor}
M.E. Taylor.
\newblock {\em Partial Differential Equations II: Nonlinear Equations}.
\newblock Applied mathematical sciences. Springer, 2010.

\end{thebibliography}

\end{document}